\numberwithin{equation}{section}
\renewcommand{\H}{\mathbb{H}}
\newcommand{\N}{\mathbb{N}}
\newcommand{\R}{\mathbb{R}}
\newcommand{\Z}{\mathbb{Z}}
\newcommand{\mm}{{\mbox{\boldmath$m$}}}
\newcommand{\ppi}{{\mbox{\boldmath$\pi$}}}
\newcommand{\sfd}{{\sf d}}
\newcommand{\Kliminf}{K\kern-3pt-\kern-2pt\mathop{\rm lim\,inf}\limits}  
\newcommand{\supp}{\mathop{\rm supp}\nolimits}   
\newcommand{\Lip}{\mathop{\rm Lip}\nolimits}          
\renewcommand{\d}{{\mathrm d}}
\newcommand{\restr}[1]{\lower3pt\hbox{$|_{#1}$}}
\newcommand{\eps}{\varepsilon}  
\newcommand{\nchi}{{\raise.3ex\hbox{$\chi$}}}
\newcommand{\limi}{\varliminf}
\newcommand{\lims}{\varlimsup}
\newcommand{\mean}[1]{\,-\hskip-1.08em\int_{#1}} 
\newcommand{\fr}{\penalty-20\null\hfill$\blacksquare$}                      
\newcommand{\prob}[1]{\mathscr P(#1)}                   
\newcommand{\e}{{\rm{e}}}                           
\renewcommand{\mm}{\mathfrak m}                                
\renewenvironment{proof}{\removelastskip\par\medskip   
\noindent{\em Proof} \rm}{\penalty-20\null\hfill$\square$\par\medbreak}
\newenvironment{proofb}{\removelastskip\par\medskip   
\noindent{\em{\bf Proof}} \rm}{\penalty-20\null\hfill$\square$\par\medbreak}
\newtheorem{theorem}{Theorem}[section]
\newtheorem{corollary}[theorem]{Corollary}
\newtheorem{lemma}[theorem]{Lemma}
\newtheorem{proposition}[theorem]{Proposition}
\newtheorem{definition}[theorem]{Definition}
\newtheorem{remark}[theorem]{Remark}
\newcommand{\X}{{\rm X}}
\newcommand{\Y}{{\rm Y}}
\renewcommand{\Z}{{\rm Z}}
\newcommand{\MCP}{{\sf MCP}}
\newcommand{\CD}{{\sf CD}}
\newcommand{\RCD}{{\sf RCD}}
\newcommand{\ncRCD}{{\sf ncRCD}}
\newcommand{\wncRCD}{{\sf wncRCD}}
\newcommand{\Eu}{{\sf E}}
\newcommand{\dist}{{\rm dist}}
\newcommand{\HS}{{\lower.3ex\hbox{\scriptsize{\sf HS}}}}
\renewcommand{\H}[1]{{\rm Hess}(#1)}
\newcommand{\D}{{\rm D}}
\newcommand{\HH}{{\mathcal H}}
\newcommand{\LL}{{\mathcal L}}
\title{Non-collapsed spaces with Ricci curvature bounded from below}
\begin{document}

\author{ Guido De Philippis\ \thanks{SISSA, gdephili@sissa.it}  \and
   Nicola Gigli
   \thanks{SISSA, ngigli@sissa.it}}

\maketitle

\begin{abstract}
We propose a definition of {\em non-collapsed} space with Ricci curvature bounded from below and we prove the versions of Colding's volume convergence theorem and of Cheeger-Colding dimension gap estimate for ${\sf RCD}$ spaces. 

In particular this establishes the stability of non-collapsed spaces under non-collapsed Gromov-Hausdorff convergence.

\end{abstract}


\tableofcontents

\section{Introduction}
Lott-Villani in \cite{Lott-Villani09} and Sturm in \cite{Sturm06I,Sturm06II} introduced a synthetic notion of lower Ricci curvature bounds for metric measure spaces: their approach is based on suitable convexity properties for entropy-like functionals over the space of probability measures equipped with the quadratic Kantorovich distance $W_2$. The classes of spaces that they introduced are called $\CD(K,N)$, standing for lower Curvature bound by $K\in\R$ and upper Dimension bound by $N\in[1,\infty]$ (in  \cite{Lott-Villani09}  only the cases $K=0$  and $N=\infty$ have been considered). 

Since then the study of these classes of spaces has been a very flourishing research area, see for instance the surveys \cite{Villani2016,Villani2017} and references therein. Among the various fine tunings of Lott-Sturm-Villani's proposal, we mention the definition of $\CD^*(K,N)$ spaces proposed by Bacher-Sturm in \cite{BacherSturm10}: under minor technical conditions, this is locally equivalent to the $\CD(K,N)$, has better local-to-global properties but a priori leads to slightly suboptimal constants in various geometric and functional inequalities (but see \eqref{eq:CM} below).

Since the very beginning, one of the main research lines has been, and still is, that of understanding the geometric properties of such spaces. Here fundamental ideas come from the theory of Ricci-limit spaces developed in the nineties by Cheeger and Colding \cite{Cheeger-Colding96,Cheeger-Colding97I,Cheeger-Colding97II,Cheeger-Colding97III,Colding97}: one would like at least to replicate all their results in the synthetic framework, and then hopefully to obtain, thanks to the new point of view, new insights about both  smooth and non-smooth objects having Ricci curvature bounded from below. In this direction it has been soon realized that the classes of $\CD/\CD^*(K,N)$ spaces are not really suitable for the development of this program: the problem is that Finlser structures are included (see the last theorem in \cite{Villani09}) and for these  Cheeger-Colding's results are not valid. For instance, the Cheeger-Colding-Gromoll splitting theorem fails in finite dimensional Banach spaces. 

Motivated by this problem  the second author proposed in \cite{Gigli12} to reinforce the Lott-Sturm-Villani condition with the functional-analytic notion of infinitesimal Hilbertianity:
\begin{equation}
\label{eq:IH}
\text{$(\X,\sfd,\mm)$ is infinitesimal Hilbertian provided $W^{1,2}(\X,\sfd,\mm)$ is an Hilbert space.}
\end{equation}
This definition is the result of a research program devoted to the understanding of the heat flow \cite{Gigli10,Gigli-Kuwada-Ohta10,AmbrosioGigliSavare11} on $\CD(K,N)$ spaces, and in particular of the introduction of the class of $\RCD(K,\infty)$ spaces  - ${\sf R}$ standing for Riemannian - in a collaboration with Ambrosio and Savar\'e \cite{AmbrosioGigliSavare11-2}. 

In \eqref{eq:IH}, $W^{1,2}(\X,\sfd,\mm)$ is the Sobolev space of real valued functions on $\X$ as introduced by Cheeger in \cite{Cheeger00} (see also the alternative, but equivalent, descriptions provided in \cite{Shanmugalingam00} and \cite{AmbrosioGigliSavare11}). { It is a  priori non-trivial, but nevertheless true, that }infinitesimal Hilbertianity is stable under mGH-convergence  when coupled with a $\CD(K,N)$ condition. Moreover, as proved by the second author in  \cite{Gigli13} (see also \cite{Gigli13over}), the splitting theorem holds in the class of infinitesimally Hilbertian  $\CD(0,N)$ spaces.

In a different direction, in another collaboration \cite{AmbrosioGigliSavare12} of the second author with Ambrosio and Savar\'e it has been introduced the class of ${\sf BE}(K,N)$ spaces: these are spaces in which, in a suitable sense, the Bochner inequality with parameters $K,N$ holds (${\sf BE}$ stands for  Bakry-\'Emery). The key points  of \cite{AmbrosioGigliSavare12} are the proof that the class of ${\sf BE}(K,N)$ spaces is stable under mGH-convergence (and thus provides another reasonable synthetic notion of spaces having a curvature-dimension bound) and that for $N=\infty$ it  coincides with that of $\RCD(K,\infty)$ spaces. 

This circle of ideas has been closed in \cite{Erbar-Kuwada-Sturm13} (and later in \cite{AmbrosioMondinoSavare13}) where it has been proved that 
\[
{\sf BE}(K,N)=\CD^*(K,N)+\text{infinitesimal Hilbertianity}.
\]
More recently, Cavalletti-Milman in \cite{CavMil16} proved in high generality, and in particular without relying on infinitesimal Hilbertianity, that it holds
\begin{equation}
\label{eq:CM}
\CD(K,N)=\CD^*(K,N)\qquad\Big(
\begin{array}{ll}
&\text{under some kind of non-branching assumption}\\
&\text{which always holds in $\RCD(K,\infty)$ spaces}
\end{array}\quad\Big)
\end{equation}
The results in \cite{CavMil16} are stated for  spaces with finite reference measure but the kind of arguments used seems to indicate that the same also holds without this restriction. For  this reason in this manuscript we shall work with $\RCD(K,N):=\CD(K,N)+{\rm Inf.Hilb.}$ spaces, rather than with  $\RCD^*(K,N):=\CD^*(K,N)+{\rm Inf.Hilb.}$ ones which have been recently more popular. In any case, all our arguments are local in nature and since the local versions of $\CD(K,N)$ and $\CD^*(K,N)$ are known to be equivalent from the very first paper \cite{BacherSturm10} where $\CD^*$ has been introduced, our results are independent from \cite{CavMil16}.

\bigskip

We now turn to the description of the content of this manuscript.

Thanks to the celebrated volume convergence result by Colding \cite{Colding97}, and to its generalization to Ricci-limit spaces by Cheeger-Colding \cite{Cheeger-Colding97I}, we know that for a pointed-Gromov-Hausdorff-converging sequence of pointed Riemannian manifolds $(M_n,p_n)$ with the same dimension  and Ricci curvature uniformly bounded from below, the volume of the unit ball around $p_n$ either stays away from 0 (i.e.\ $\inf_n{\rm Vol_n}(B^{M_n}_1(p_n))>0$) or it converges to 0. Limit spaces are called \emph{non-collapsed} or \emph{collapsed} according to whether they are obtained as limits of sequences of the former or latter kind respectively. 

As it turned out from the analysis done in \cite{Cheeger-Colding96,Cheeger-Colding97I,Cheeger-Colding97II,Cheeger-Colding97III}, non-collapsed spaces are more regular than collapsed ones and it is therefore natural to look for a synthetic counterpart of this class of spaces. To do so  we should look for an \emph{intrinsic} characterization of non-collapsed Ricci-limit spaces, i.e.\ for one which does not rely on the existence of a converging sequence having suitable properties. A work in this direction has also been done by Kitabeppu in \cite{Kita17} (see also Remark \ref{re:Kita}).

Let us observe that the aforementioned volume convergence result grants, as noticed in \cite{Cheeger-Colding97I}, that: a pGH-limit space $(\X,\sfd)$ of a sequence of $N$-dimensional manifolds with Ricci curvature uniformly bounded from below is non-collapsed if and only if
\[
\text{the volume measures weakly converge to the measure $\HH^N$ on $\X$ and $\HH^N(\X)>0$},
\]
where here and in the following $\HH^N$ is the  $N$-dimensional Hausdorff measure.

Since for a $\CD(K,N)$ space $(\X,\sfd,\mm)$ the requirement $\mm(\X)>0$ is part of the definition, the above motivates  the following:
\begin{definition}[Non-collapsed $\RCD$ spaces] Let $K\in\R$ and $N\geq 1$. We say that $(\X,\sfd,\mm)$ is a \emph{non-collapsed} $\RCD(K,N)$ space, $\ncRCD(K,N)$ space in short, provided it is an $\RCD(K,N)$ space and $\mm=\HH^N$.
\end{definition}
From the known structural properties of $\RCD(K,N)$ spaces it is not hard to show that if $(\X,\sfd,\mm)$ is  a  $\ncRCD(K,N)$, then   $N$  must be  an  integer. This follows for instance from the rectifiability results proved in \cite{Mondino-Naber14}, \cite{MK16}, \cite{GP16-2} (see Theorem \ref{thm:rett}). Alternatively, this can be proved by blow-up arguments, see Theorem \ref{thm:wncpropp} and in particular the implication $(iii)\Rightarrow (iv)$.

Imitating the arguments in \cite{Cheeger-Colding97I} we shall prove that $\ncRCD(K,N)$ spaces are stable   under Gromov-Hausdorff (thus a priori not necessarily \emph{measured}-GH) convergence in the sense made precise by the following theorem:
\begin{theorem}[Non-collapsed and collapsed convergence]\label{thm:stabnc}
Let $(\X_n,\sfd_n,\HH^N,x_n)$ be a sequence of pointed $\ncRCD(K,N)$ spaces. Assume that $(\X_n,\sfd_n,x_n)$ converges to $(\X_\infty,\sfd_\infty,x)$ in the pointed-Gromov-Hausdorff topology. Then precisely one of the following happens:
\begin{itemize}
\item[i)] $\lims_{n\to\infty}\HH^N(B_1(x_n))>0$. In this case the $\lims$ is actually a limit and $(\X_n,\sfd_n,\HH^N,x_n)$ converges in the pointed-measured-Gromov-Hausdorff topology to $(\X_\infty,\sfd_\infty,\HH^N,x)$. In particular $(\X,_\infty\sfd_\infty,\HH^N)$ is a $\ncRCD(K,N)$ space.
\item[ii)] $\lim_{n\to\infty}\HH^N(B_1(x_n))=0$. In this case $\dim_{\HH}(\X_\infty)\leq N-1$.
\end{itemize}
\end{theorem}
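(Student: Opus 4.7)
The plan is to combine the mGH-precompactness of $\RCD(K,N)$ spaces with the Bishop-Gromov comparison to produce a measured limit, then treat the two alternatives according to whether the limiting reference measure is trivial or not. Throughout, let $v_{K,N}(r)$ denote the volume of a ball of radius $r$ in the model simply-connected space of constant sectional curvature $K/(N-1)$ and dimension $N$.

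First I would set up a candidate limit measure. Bishop-Gromov on each $\ncRCD(K,N)$ space gives $\HH^N(B_r(x_n))\le v_{K,N}(r)$ together with monotonicity of $r\mapsto \HH^N(B_r(x_n))/v_{K,N}(r)$, so along a subsequence $\HH^N(B_1(x_n))\to L\in[0,v_{K,N}(1)]$. Gromov precompactness combined with Prokhorov's theorem (applied locally on each ball) then allows a further extraction so that $(\X_n,\sfd_n,\HH^N,x_n)\to(\X,\sfd,\mm,x)$ in the pmGH topology for some Radon measure $\mm\ge 0$ on $\X$ (the topological limit is forced by the hypothesis). Stability of $\CD(K,N)$ and of infinitesimal Hilbertianity under pmGH convergence ensures that whenever $\mm\ne 0$, $(\X,\sfd,\mm)$ is $\RCD(K,N)$.

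In the case $L>0$, the key task is to identify $\mm$ with $\HH^N$; once this is done, uniqueness of the limit along arbitrary subsequences upgrades the $\limsup$ to a genuine limit and yields the $\ncRCD(K,N)$ conclusion. The upper inequality $\mm\le\HH^N$ follows from the lower semicontinuity of Hausdorff measure along GH $\varepsilon_n$-isometries applied to open balls, combined with weak convergence of the measures. For the reverse inequality I would use the rectifiable structure of $\RCD(K,N)$ spaces (Theorem \ref{thm:rett}): at $\mm$-a.e.\ point $y$ the tangent is Euclidean of some integer dimension $k\le N$, and Bishop-Gromov together with rigidity in the model comparison forces $k=N$ and a unit rescaled density $\theta_N(\mm,y)=1$. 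Computing the $\HH^N$-density at the same points then gives $\mm=\HH^N$.

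In the case $L=0$, Bishop-Gromov monotonicity applied uniformly in $n$ yields $\HH^N(B_r(x_n))\to 0$ for every $r>0$, and by the GH $\varepsilon_n$-isometries the same holds with $x_n$ replaced by any sequence $y_n\to y\in\X$. The lower semicontinuity of $\HH^N$ along GH convergence used above then gives $\HH^N(B_r(y))=0$ for every $y\in\X$ and $r>0$, hence $\HH^N(\X)=0$ and $\dim_{\HH}(\X)\le N$. The improvement to $\dim_{\HH}(\X)\le N-1$ comes from the Cheeger-Colding-type dimension gap announced in the abstract: an $\RCD(K,N)$ pGH-limit of $\ncRCD(K,N)$ spaces cannot have essential Hausdorff dimension strictly between $N-1$ and $N$. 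The main obstacle is precisely the reverse inequality $\HH^N\le\mm$ in Step 2, which is the synthetic version of Colding's volume convergence: extracting the exact density $\theta_N(\mm,y)=1$ requires the blow-up and tangent analysis for $\RCD$ spaces together with the rigidity in Bishop-Gromov, and this is where the bulk of the work sits. The dichotomy itself, and the easy direction $\mm\le\HH^N$, are comparatively soft.
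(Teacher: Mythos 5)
Your sketch has the right skeleton (split into the two alternatives according to $L=\limsup_n\HH^N(B_1(x_n))$, use rectifiability and Bishop--Gromov, conclude case (ii) via the dimension-gap theorem) and in that broad sense matches the paper, but it contains a gap that is in fact the entire content of the theorem: you treat the (semi)continuity of $\HH^N$ along GH convergence as a soft fact, and it is not. Hausdorff measure is \emph{not} lower semicontinuous along GH convergence in general (e.g.\ finite $1/n$-nets of $[0,1]$ GH-converge to $[0,1]$, yet $\HH^1$ of each net is $0$ while $\HH^1([0,1])=1$), and it is not upper semicontinuous either. For $\RCD(K,N)$ spaces continuity of $Z\mapsto\HH^N(Z)$ on closed balls \emph{is} true, but it is precisely Theorem~\ref{thm:contvol}, whose proof occupies Section~\ref{se:contvol} of the paper and requires the almost-splitting $\varepsilon$-regularity of Mondino--Naber (Theorem~\ref{thm:MN11}, Proposition~\ref{thm:MN2}), a maximal-function argument and the covering Lemma~\ref{le:colding2}. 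None of this is present in your proposal. Concretely: in case $L>0$ you need $\HH^N\le\mm$, and you acknowledge this is ``where the bulk of the work sits'', but you offer only ``Bishop--Gromov rigidity forces $\theta_N(\mm,y)=1$'', which is false in general for a weakly non-collapsed limit measure and requires the quantitative $\varepsilon$-regularity input to justify. In case $L=0$ you invoke the same unjustified lower semicontinuity of $\HH^N$ to conclude $\HH^N(\X)=0$.

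A secondary point: your claimed route to the ``easy'' inequality $\mm\le\HH^N$ is also not correct as stated. Combining weak convergence of measures (which gives $\mm(U)\le\liminf_n\HH^N_{\X_n}(U_n)$ for open balls) with lower semicontinuity of $\HH^N$ (which would also give a lower bound of the same $\liminf$) produces two lower bounds on the same quantity and compares nothing; to deduce $\mm\le\HH^N$ you again need full continuity, i.e.\ Theorem~\ref{thm:contvol}. The paper obtains $\mm\le\HH^N$ by a genuinely different and much softer mechanism: the Bishop--Gromov density $\vartheta_N$ is lower semicontinuous along pmGH convergence by monotonicity (Lemma~\ref{lm:propt}(i)), each $\ncRCD(K,N)$ space has $\vartheta_N\le 1$ (Corollary~\ref{cor:densnc}), hence the limit is weakly non-collapsed (Theorem~\ref{thm:stabweak}) with $\mm=\vartheta_1\HH^N$ and $\vartheta_1\le 1$ (Theorem~\ref{thm:wncpropp}(iii) and \eqref{eq:stessedens}). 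Finally, in case $L=0$ the reference measures $\HH^N$ on $\X_n$ converge weakly to the zero measure; to be able to speak of a limit $\RCD(K,N)$ structure on $\X$, and hence to apply the dimension-gap Theorem~\ref{thm:dimgap}, you must (as the paper does) normalize, e.g.\ replace $\HH^N$ by $\HH^N/\HH^N(B_1(x_n))$ before passing to the pmGH limit. Your appeal to the dimension gap at the very end is correct once you know $\X$ carries \emph{some} $\RCD(K,N)$ measure and $\HH^N(\X)=0$, but establishing the latter from $L=0$ is again the content of Theorem~\ref{thm:contvol} (the paper instead uses it contrapositively, assuming $\dim_\HH(\X)>N-1$ and deriving $\liminf_n\HH^N(B_1(x_n))>0$).
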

Here and in what follows $\dim_\HH(\X)$ is the Hausdorff dimension of the metric space $\X$.  Notice that in particular
\begin{quote} 
non-collapsed limits of Riemannian manifolds in the sense of Cheeger-Colding are non-collapsed spaces in our sense, 
\end{quote} 
explaining our choice of terminology.

Theorem \ref{thm:stabnc} is strictly related to  the following two results. The first generalizes the already mentioned volume convergence theorem to the $\RCD$ setting. Notice that there is no non-collapsing assumption.
\begin{theorem}[Continuity of $\HH^N$]\label{thm:contvol}
For  $K\in\R$,  $N\in[1,\infty)$ and $R\geq 0$ let $\mathbb B_{K,N,R}$ be the collection of all (equivalence classes up to isometry of) closed  balls of radius $R$ in $\RCD(K,N)$ spaces equipped with the Gromov-Hausdorff distance.

Then the map ${\mathbb B}_{K,N,R}\ni Z\mapsto \HH^N(Z)$ is real valued and continuous.
\end{theorem}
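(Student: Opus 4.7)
The statement has two parts: finiteness of $\HH^N(Z)$ for every $Z\in\mathbb B_{K,N,R}$, and continuity of $Z\mapsto\HH^N(Z)$ in the Gromov--Hausdorff topology. My plan is to first establish finiteness via a Bishop--Gromov-type inequality for $\HH^N$, and then to deduce continuity from the standard behaviour of covers under $\eps$-isometries, using the uniform a priori bound to pass to the limit.

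For finiteness, I would prove that on any $\RCD(K,N)$ space $(\X,\sfd,\mm)$ and every $x\in\X$, $\HH^N(B_r(x))\le v_{K,N}(r)$, where $v_{K,N}(r)$ denotes the volume of the $r$-ball in the $N$-dimensional simply connected model space of constant curvature $K/(N-1)$. The bridge from the known Bishop--Gromov inequality for the reference measure $\mm$ to one for $\HH^N$ is provided by the rectifiable structure of $\RCD(K,N)$ spaces recalled around Theorem \ref{thm:rett}: the space splits $\mm$-a.e.\ into $k$-regular strata with $k\le N$, on each of which $\mm$ is absolutely continuous with respect to $\HH^k$ with controlled density. On the strata with $k<N$ one has $\HH^N=0$, while on the top-dimensional stratum the density information together with Bishop--Gromov for $\mm$ yields the stated bound; a Vitali covering argument then extends the estimate to all of $B_r(x)$.

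For continuity, given a GH-convergent sequence $Z_n\to Z$ of closed $R$-balls, I would choose $\eps_n$-isometries $f_n:Z_n\to Z$ with $\eps_n\to 0$. Any cover of $Z$ by sets of diameter $<\delta$ pulls back via $f_n$ to a cover of $Z_n$ by sets of diameter $<\delta+2\eps_n$, and symmetrically covers of $Z_n$ push forward to covers of $Z$. Feeding these comparisons into the Carath\'eodory construction of $\HH^N$ gives both $\HH^N(Z)\le\liminf_n\HH^N(Z_n)$ and $\limsup_n\HH^N(Z_n)\le\HH^N(Z)$, once one can interchange the limits in $n$ and $\delta$; this interchange is licensed by the uniform a priori bound from the first step.

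The main obstacle is the Bishop--Gromov inequality for $\HH^N$. The synthetic curvature condition is built around $\mm$, not $\HH^N$, so translating convexity of entropy into a statement about the intrinsic Hausdorff measure requires the full rectifiable structure together with a delicate density analysis. Once past this hurdle, both finiteness and the GH continuity reduce to essentially standard measure-theoretic considerations.
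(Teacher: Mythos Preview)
Your continuity argument has a genuine gap. Pushing and pulling covers through $\eps_n$-isometries compares $\HH^N_{\delta+O(\eps_n)}(Z_n)$ with $\HH^N_\delta(Z)$; to reach $\HH^N(Z_n)$ and $\HH^N(Z)$ you must send $\delta\to 0$, and this is where the argument breaks. When you perturb each set in a $\delta$-cover by $O(\eps_n)$ in diameter, the extra cost is of order $M_\delta\cdot \delta^{N-1}\eps_n$, where $M_\delta$ is the cardinality of the cover; by doubling $M_\delta$ grows like $\delta^{-N}$, so the correction behaves like $\eps_n/\delta$ and blows up as $\delta\downarrow 0$ for fixed $n$. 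What you actually need is that $\HH^N_\delta(Z_n)\to\HH^N(Z_n)$ \emph{uniformly in $n$} as $\delta\downarrow 0$, and a mere uniform upper bound on $\HH^N(Z_n)$ does not give this. Note also that your cover-transfer argument uses nothing about $\RCD$ beyond doubling; if it worked it would prove GH-continuity of $\HH^N$ in far too much generality, and easy examples (e.g.\ finite $\tfrac1n$-nets of $[0,1]$, for lower semicontinuity) show this is false.

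The paper's route is entirely different and uses the $\RCD$ structure in both inequalities, not just for finiteness. For upper semicontinuity it proves the identity $\HH^N=\vartheta_N^{-1}\mm$ (with the convention $1/\infty=0$) via Theorem~\ref{thm:rett}, embeds the sequence into a common pmGH realization, and applies the Fatou-type Lemma~\ref{le:fatou} to $-\vartheta_N^{-1}$, using the lower semicontinuity of $\vartheta_N$ along pmGH sequences (Lemma~\ref{lm:propt}(i)). For lower semicontinuity it needs the substantial Proposition~\ref{thm:MN2}: at $\HH^N$-a.e.\ point of the limit the tangent is $\R^N$, so small balls around such points are GH-close to Euclidean balls; by Proposition~\ref{thm:MN2} the corresponding balls in $Z_n$ contain $(1+\eps)$-biLipschitz images of large pieces of Euclidean balls, yielding $\HH^N(B_{r_i}^{\X_n})\ge(1-\eps)\omega_N r_i^N$. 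A Vitali cover then sums these to get $\liminf_n\HH^N(Z_n)\ge\HH^N(Z_\infty)$. The biLipschitz charts with volume estimate are precisely what replaces the failed ``interchange of limits''; there is no shortcut via raw $\eps$-isometries.
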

Such theorem is true  even for open balls, see equation \eqref{eq:hnsfere}. Notice also that Gromov precompactness theorem for $\RCD$ spaces and the stability of the $\RCD$ condition grant that $\mathbb B_{K,N,R}$ is compact w.r.t.\ the Gromov-Hausdorff topology (see also the proof of Theorem \ref{thm:contvol} given at the end of Section \ref{se:contvol}).

The second result, analogous to \cite[Theorem 3.1]{Cheeger-Colding97I}, concerns the Hausdorff dimension of an $\RCD(K,N)$ space;  again there is not an assumption about non-collapsing, but on the other hand $N$ is assumed to be integer.
\begin{theorem}[Dimension gap]\label{thm:dimgap}
Let $K\in\R$, $N\in\N$, $N\geq1$, and $\X$ an $\RCD(K,N)$ space. Then either  $\dim_\HH(\X)=N$ or $\dim_\HH(\X)\leq N-1$.
\end{theorem}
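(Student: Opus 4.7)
The plan is to combine the rectifiability theory for $\RCD(K,N)$ spaces with an iterated blow-up/stratification argument of Cheeger--Colding type. By the rectifiability result (Theorem \ref{thm:rett}), there is an integer $k \in \{1,\dots,N\}$, the \emph{essential dimension} of $\X$, such that at $\mm$-a.e.\ $x\in\X$ the unique measured tangent is the pointed Euclidean space of dimension $k$ with its standard metric-measure structure. I would split the argument according to whether $k=N$ or $k\leq N-1$.

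If $k=N$, the rectifiability supplies a Borel subset of positive $\mm$-measure which is biLipschitz to a positive-Lebesgue-measure subset of $\R^N$, so $\dim_\HH(\X)\geq N$. For the reverse inequality, Bishop--Gromov volume comparison ensures that $\HH^N(B_r(x))$ is locally finite, which through a standard covering/$\sigma$-compactness argument gives $\HH^\alpha(\X)=0$ for every $\alpha>N$. Hence $\dim_\HH(\X)=N$.

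If $k \leq N-1$, the key claim is that no point of $\X$ can admit a copy of $\R^N$ as a tangent cone. Indeed, tangents of an $\RCD(K,N)$ space are $\RCD(0,N)$ metric measure cones, and the Euclidean space $\R^N$ is the maximal possible non-collapsed tangent; by the blow-up propagation analysis developed earlier in the paper (Theorem \ref{thm:wncpropp}), having such a tangent at even one point would force the essential dimension to equal $N$, contradicting $k\leq N-1$. Granted this, one introduces the Cheeger--Colding stratification $S^j := \{x\in\X:\text{no tangent at }x\text{ splits off a factor }\R^{j+1}\}$ and proves $\dim_\HH(S^j)\leq j$ by iterating the tangent-cone/dimension-reduction argument inside the $\RCD(0,N)$ category. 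Since no tangent equals $\R^N$, we have $\X = S^{N-1}$, whence $\dim_\HH(\X) \leq N-1$.

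The principal obstacle is the propagation step: deducing from the existence of a single point with tangent $\R^N$ that the essential dimension is $N$ everywhere. This relies crucially on the structural theory of weakly non-collapsed $\RCD$ spaces established in the paper, together with the lower semicontinuity of Hausdorff dimension under pointed-Gromov--Hausdorff limits of tangents. Once this is available, the dimension-reduction in the stratification runs as in the classical Ricci-limit setting, because each tangent of a tangent is again an $\RCD(0,N)$ cone, and the two cases above exhaust the possible values of $\dim_\HH(\X)$.
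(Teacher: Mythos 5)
Your proposal diverges from the paper's argument, and as written it rests on two ingredients that the paper does not establish and that do not follow from the cited results.

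First, you posit the existence of a single \emph{essential dimension} $k$ such that the tangent at $\mm$-a.e.\ point is $\R^k$, and you base the whole case-split ($k=N$ versus $k\le N-1$) on it. But Theorem \ref{thm:rett} only gives a decomposition $\X=\mathcal N\cup\bigcup_{k=1}^M\bigcup_j U^k_j$ in which pieces of \emph{different} dimensions $k$ may all be charged by $\mm$; the constancy of the dimension of tangents for $\RCD(K,N)$ spaces is a substantially deeper fact that this paper neither proves nor invokes. Without it, your dichotomy is not well posed, and one would at the very least have to handle a ``mixed-dimension'' scenario, which your outline does not address.

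Second, in the case $k\le N-1$ you reduce to the stratification bound $\dim_\HH(S_{N-1})\le N-1$ together with the claim that no point of $\X$ has $\R^N$ as a tangent (so that $\X=S_{N-1}$). Neither step is justified in the level of generality you need. The paper proves the stratification estimate only for non-collapsed (or, as noted in the remarks, ``polar'') spaces: the argument relies on Lemma \ref{le:metriccone} to know that tangents are metric cones, which in turn needs $\vartheta_N<\infty$; for a general $\RCD(K,N)$ space this fails. And the propagation claim that a single point with tangent $\R^N$ would force the essential dimension to be $N$ is not a consequence of Theorem \ref{thm:wncpropp}, which characterizes when $\mm\ll\HH^N$ globally; it says nothing about a lone point with a Euclidean tangent. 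This is precisely the ``principal obstacle'' you flag, and indeed it is a genuine gap rather than a detail to be filled in later.

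The paper's route avoids both issues by arguing contrapositively and never appealing to a constant essential dimension or to a general-$\RCD$ stratification. It first proves, via the Cheeger--Colding excess/optimal-transport estimate (Proposition \ref{le:nm1}, using Lemma \ref{le:figo}), that $\dim_\HH\bigl(\X\setminus\mathcal E_1(\X)\bigr)\le N-1$ for \emph{any} $\RCD(K,N)$ space, with no cone assumption. Assuming $\dim_\HH(\X)>N-1$, it then uses the $\HH^\alpha_\infty$-density Lemma \ref{le:hinfty} to find a blow-up point in $\mathcal E_1(\X)$ with a line-splitting tangent of positive $\HH^{N-1+\eps}$ measure, iterates the splitting $N$ times to produce an iterated tangent equal to $\R^N$, and finally invokes the $\eps$-regularity/almost-splitting statement (Proposition \ref{thm:MN2}) to manufacture a biLipschitz chart of positive $\HH^N$-measure in $\X$ itself, hence $\dim_\HH(\X)=N$. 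If you want to pursue your direction, the minimal missing ingredients would be the constancy of tangent dimension for $\RCD$ spaces (not available here) and a $\dim_\HH(S_j)\le j$ estimate valid without the cone structure on tangents; both are nontrivial and not supplied by the paper.
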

Since $\R$ is $\RCD(0,1+\eps)$, we see that  the assumption $N\in\N$ is necessary in the above. { For non-integer $N$'s this last result easily implies the following:
\begin{corollary}\label{cor:ref}
Let $K,N\in\R$, $N\geq1$, and $\X$ an $\RCD(K,N)$ space. Then $\dim_{\HH}(\X)\leq [N]$, where $[\cdot]$ denotes the integer part.
\end{corollary}
Notice that this is sharp because for every $N\in(1,2)$ the space $([0,\pi],\sfd_\Eu,\sin^{N-1}(t)\,\d t)$ is an $\RCD(N-1,N)$ space whose Hausdorff dimension is 1.
}

As a quite direct consequence of Theorem \ref{thm:stabnc} and its proof we also obtain the following volume (almost) rigidity result:
\begin{theorem}[Volume rigidity]\label{thm:volrig}
For every $\eps>0$ and $N\in\N$, $N\geq 1$ there is $\delta=\delta(\eps,N)$ such that the following holds. Let $(\X,\sfd,\HH^N)$ be a $\ncRCD(-\delta,N)$ space and $\bar x\in\X$ such that
\[
\HH^N(B^\X_1(\bar x))\geq \HH^N(B_1^{\R^N}(0))(1-\delta).
\]
Then
\[
\sfd_{\rm GH}\big(\overline B_{1/2}^{\X}(\bar x),\overline B_{1/2}^{\R^N}(0)\big)\leq\eps.
\]
\end{theorem}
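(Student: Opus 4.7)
The plan is a compactness and contradiction argument, mimicking Cheeger--Colding's classical volume rigidity. Suppose the statement fails for some $\eps>0$ and $N\in\N$; then there is a sequence of pointed $\ncRCD(-1/n,N)$ spaces $(\X_n,\sfd_n,\HH^N,\bar x_n)$ satisfying
\[
\HH^N\bigl(B_1^{\X_n}(\bar x_n)\bigr)\geq\Bigl(1-\tfrac{1}{n}\Bigr)\HH^N\bigl(B_1^{\R^N}(0)\bigr),\qquad
\sfd_{\rm GH}\bigl(\overline B_{1/2}^{\X_n}(\bar x_n),\overline B_{1/2}^{\R^N}(0)\bigr)>\eps.
\]
Since the sequence is uniformly $\CD(-1,N)$, Gromov's precompactness theorem yields a pGH-convergent (not relabelled) subsequence with limit $(\X,\sfd,\bar x)$.

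Since the volumes $\HH^N(B_1^{\X_n}(\bar x_n))$ stay bounded away from zero, alternative (i) of Theorem~\ref{thm:stabnc} applies: the convergence upgrades to pmGH, the limit reference measure is $\HH^N$, and $(\X,\sfd,\HH^N)$ is an $\ncRCD(0,N)$ space (the curvature parameter $-1/n$ passes to the limit by $\RCD$-stability). The continuity of the volume functional from Theorem~\ref{thm:contvol} (in its open-ball form \eqref{eq:hnsfere}) yields
\[
\HH^N\bigl(B_1^{\X}(\bar x)\bigr)=\lim_{n\to\infty}\HH^N\bigl(B_1^{\X_n}(\bar x_n)\bigr)\geq\HH^N\bigl(B_1^{\R^N}(0)\bigr).
\]
On the other hand, the Bishop-type upper bound available on $\ncRCD(0,N)$ spaces --- the pointwise $N$-density of $\HH^N$ is at most $1$, hence by Bishop--Gromov monotonicity $\HH^N(B_r^\X(\bar x))\leq\omega_N r^N$ with $\omega_N:=\HH^N(B_1^{\R^N}(0))$ --- forces the reverse inequality at $r=1$. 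So equality holds.

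Equality at $r=1$ in the Bishop--Gromov monotonicity forces the ratio $r\mapsto\HH^N(B_r^\X(\bar x))/(\omega_N r^N)$ to be identically $1$ on $(0,1]$. The volume-cone to metric-cone theorem (in the $\RCD$ setting) then identifies $\overline B_1^\X(\bar x)$ with a metric cone centred at $\bar x$ over a cross-section $\Sigma$ that is $\RCD(N-2,N-1)$ carrying its natural cone measure. Matching the total mass of $\Sigma$ with $\HH^{N-1}(S^{N-1})$ and invoking the maximal-volume rigidity for $\ncRCD(N-2,N-1)$ spaces forces $\Sigma$ to be isometric to the round sphere $S^{N-1}$, whence $\overline B_1^\X(\bar x)$ is isometric to $\overline B_1^{\R^N}(0)$. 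The pGH convergence now localises to GH convergence of closed half-balls,
\[
\sfd_{\rm GH}\bigl(\overline B_{1/2}^{\X_n}(\bar x_n),\overline B_{1/2}^{\R^N}(0)\bigr)\longrightarrow 0,
\]
contradicting the choice of $\eps$.

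The main obstacle is the rigidity step --- deducing from full Euclidean volume of the unit ball that it is isometric to $\overline B_1^{\R^N}(0)$. This hinges on three ingredients that must each be available in the $\ncRCD$ framework: the pointwise density upper bound underlying the Bishop-type bound, the volume-cone to metric-cone theorem, and the classification of maximal-volume $\ncRCD(N-2,N-1)$ spaces as round spheres. Their interplay, rather than any one of them in isolation, is the non-trivial content of the argument.
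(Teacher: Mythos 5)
The contradiction-by-compactness framing (pass to a pGH limit, use Theorem~\ref{thm:stabnc} to get non-collapsed pmGH convergence, use Theorem~\ref{thm:contvol} and the Bishop bound from Corollary~\ref{cor:densnc} to deduce exact equality $\HH^N(B_1^\X(\bar x))=\omega_N$) is exactly the paper's Step~1 and is sound. The gap is in the rigidity step.

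From the constancy of $r\mapsto\HH^N(B_r(\bar x))/(\omega_N r^N)$ on $(0,1]$, the volume-cone-to-metric-cone theorem from \cite{DPG16} only produces a measure-preserving isometry of the \emph{half}-ball $B_{1/2}^\X(\bar x)$ with a half-ball of an $N$-cone $\Y$; you cannot conclude that $\overline B_1^\X(\bar x)$ itself is a cone over a cross-section. More importantly, you then invoke a ``maximal-volume rigidity for $\ncRCD(N-2,N-1)$ spaces: total mass $=\HH^{N-1}(S^{N-1})$ forces isometry with the round sphere'' as a black box. No such result is established in this paper, and it is not an immediate consequence of what is proved here (the literature result one usually has in mind, Ketterer's maximal-diameter theorem, is a diameter --- not volume --- rigidity, and converting from one to the other requires its own argument; there are also boundary cases such as $N=1$ where the cross-section is not of the claimed form). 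As you yourself flag, the interplay of the three ingredients is the non-trivial content, and the third ingredient is precisely the one not available.

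The paper's proof avoids the cross-section entirely. Having obtained the cone $\Y$, it observes that $\vartheta_N[\Y]$ is constant along rays and lower semicontinuous, hence $\vartheta_N[\Y](o)\le\vartheta_N[\Y](y)$ for all $y$; together with $\vartheta_N[\Y](o)=1$ and the upper bound $\vartheta_N[\Y]\le 1$ from Corollary~\ref{cor:densnc}, this forces $\vartheta_N[\Y]\equiv 1$ on all of $\Y$. A repetition of the Bishop--Gromov argument and \cite{DPG16} then shows that $\Y$ is an $N$-metric measure cone with vertex at \emph{every} $y\in\Y$, at which point Lemma~\ref{lm:favaa} --- iterated application of the splitting theorem --- directly identifies $\Y$ with $\R^N$. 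This is self-contained and sidesteps any sphere rigidity. To repair your proof you would need to either reproduce this $\vartheta_N\equiv 1$ argument, or prove (or carefully cite and adapt) the cross-section sphere rigidity, being careful both about where the cone structure actually holds and about the low-dimensional base cases.
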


The example  of a unit ball in a cylinder shows that we cannot replace $1/2$ with $1$ in the conclusion, see also the discussion in \cite{DPG16}. { A simple consequence of the Bishop-Gromov inequality combined  with Corollary \ref{cor:densnc}  and  the above Theorem is the following:

\begin{corollary}\label{cor:Bishop}
Let $(\X,\sfd,\HH^N)$ be a $\ncRCD(0,N)$ space, then for all \(x\in \X\) and \(r>0\), 
\begin{equation}\label{e:Bishop}
\HH^N(B_{r}^\X(x))\le \omega_Nr^N.
\end{equation}
Moreover, if there exists \(\bar x\in \X\) and \(\bar r>0\) achieving equality in \eqref{e:Bishop}, then \(B_{\bar r/2}^\X(\bar x)\) is isometric to \(B_{r/2}^{\R^N}(0)\). As a consequence, a point \(x\in \X\) is regular (i.e. all tangent cones are isometric to \(\R^N\)) if and only if 
\[
\lim_{r\to 0} \frac{\HH^N(B_{r}^\X(x))}{\omega_N r^N}=1.
\]
\end{corollary}
}

We now pass to the description of the main properties of non-collapsed spaces. A first result is about the stratification of their singular set: denote by $S_k(\X)$ the set of points $x\in\X$ such that no tangent space splits off a factor $\R^{k+1}$ (see \eqref{eq:essek} for the precise definition). In the same spirit of classical stratification results in geometric measure theory first established in \cite{Almgren00} and axiomatized in \cite{White97} we have the following result, compare with  \cite[Theorem 4.7]{Cheeger-Colding97I}.
\begin{theorem}[Stratification]\label{thm:singular}
Let $K\in\R$, {$N\in\N $}, $N\geq 1$ and let $\X$ be a  $\ncRCD(K,N)$ space.
 
Then \(\dim_\HH(\mathcal S_k(\X))\leq k\)  for every $k\in\N$.
 \end{theorem}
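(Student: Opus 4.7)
The plan is to adapt Cheeger-Colding's proof of \cite[Theorem 4.7]{Cheeger-Colding97I} to the $\ncRCD$ setting via the abstract stratification framework of Almgren-White \cite{White97}, using Theorem \ref{thm:contvol}, Theorem \ref{thm:stabnc} and Theorem \ref{thm:volrig} as the non-smooth substitutes for Cheeger-Colding's estimates. The framework reduces the task to two geometric inputs: every tangent space is a metric cone with vertex at the base point, and a cone-splitting principle holds.

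\textbf{Tangents are metric cones and quantitative stratification.} The Bishop-Gromov inequality yields the monotone density $r\mapsto \HH^N(B_r(x))/(\omega_N r^N)$ for every $x\in\X$. Rescaling the distance by $1/r$ produces a sequence of pointed $\ncRCD(-r^2|K|,N)$ spaces whose pGH-subsequential limits are, by Theorem \ref{thm:stabnc}, pointed $\ncRCD(0,N)$ spaces. The constancy of the volume density along the limit, a consequence of the monotonicity formula, together with Theorem \ref{thm:volrig} applied at vanishing scales, forces every tangent to be a metric cone with vertex at the base point (see \cite{DPG16} for an analogous rigidity statement). Call a pointed $\ncRCD(0,N)$ space \emph{$k$-symmetric} if it is isometric as a pointed space to $(\R^k\times C(Z),(0,o_Z))$ for some metric space $Z$. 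For $\epsilon\in(0,1)$ define
\begin{equation*}
\mathcal S^{k,\epsilon}(\X):=\big\{x\in\X:\sfd_{\rm GH}(\overline B_r(x),\overline B_r^Y(y))>\epsilon r\ \text{ for all }r\in(0,1)\text{ and all }(k+1)\text{-symmetric }(Y,y)\big\},
\end{equation*}
so that $\mathcal S_k(\X)=\bigcup_{\epsilon>0}\mathcal S^{k,\epsilon}(\X)$; it thus suffices to prove $\dim_\HH(\mathcal S^{k,\epsilon}(\X))\leq k$ for every $\epsilon>0$.

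\textbf{Cone-splitting and dimension estimate.} The central geometric input is: if a pointed $\ncRCD(0,N)$ cone $(C,o)$ admits a $k$-symmetric tangent at some $p\ne o$, then $(C,o)$ is itself $(k+1)$-symmetric. This is derived by applying the $\RCD(0,N)$ splitting theorem \cite{Gigli13} to identify the $\R^k$-factor detected at $p$ as a global metric product in $C$, and then combining this splitting (which is orthogonal to the ray $op$) with the dilation invariance of $C$ at $o$ to promote it to an $\R^{k+1}$-Euclidean factor through $o$. Granted cone-splitting, the inequality $\dim_\HH(\mathcal S^{k,\epsilon}(\X))\leq k$ is then obtained by contradiction via a Frostman-type argument: if $\dim_\HH>k$, a positive Radon measure $\mu$ concentrated on $\mathcal S^{k,\epsilon}(\X)$ with $\mu(B_r(x))\leq Cr^{k+\alpha}$ exists; at a $\mu$-density point, an iterated blow-up using Gromov's pGH-precompactness for the $\RCD$ class together with Theorem \ref{thm:stabnc} produces a tangent cone $C$ at some $x$ and a point $p\in C\setminus\{o\}$ whose tangent is $k$-symmetric. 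The cone-splitting principle then yields the $(k+1)$-symmetry of $C$, which by the quantitative stability encoded in the definition of $\mathcal S^{k,\epsilon}$ contradicts $x\in\mathcal S^{k,\epsilon}(\X)$.

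\textbf{Main obstacle.} The chief difficulty lies in the verification of the cone-splitting principle in the non-smooth setting, as it requires transferring a Euclidean factor detected by an iterated tangent up to the ambient cone. The argument combines the splitting theorem with the rigidity of the cone structure at the vertex, and the integer dimensionality of the Euclidean factor --- needed to avoid fractional splittings and to make the inductive bookkeeping well defined --- is furnished by Theorem \ref{thm:dimgap}. A secondary technical point is the quantitative version of cone-splitting underlying the Frostman-density argument: one must pass from approximate $k$-symmetry at some non-vertex scale to approximate $(k+1)$-symmetry at the base point, and this is where the compactness of $\mathbb B_{K,N,R}$ from Theorem \ref{thm:contvol} is used to convert qualitative into quantitative statements.
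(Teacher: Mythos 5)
Your outline is close in spirit to an approach via quantitative stratification, but it contains a genuine gap at the central step, and it also misdescribes the mechanism the paper actually uses.

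The cone-splitting principle as you state it --- ``if a pointed $\ncRCD(0,N)$ cone $(C,o)$ admits a $k$-symmetric \emph{tangent} at some $p\ne o$, then $(C,o)$ is $(k+1)$-symmetric'' --- is false. Take $C=\R^2$, $o=0$, $p\ne 0$: the tangent at $p$ is $\R^2$, hence $2$-symmetric, but $\R^2$ is certainly not $3$-symmetric. The problem is twofold. First, the radial direction at $p$ (the ray $op$, becoming a line after blow-up) may already be one of the $k$ Euclidean factors detected in the tangent, so you cannot add a new one for free. Second, the sketched derivation --- ``apply the splitting theorem to identify the $\R^k$-factor detected at $p$ as a global metric product in $C$'' --- conflates infinitesimal and global splitting: the $\RCD(0,N)$ splitting theorem of \cite{Gigli13} requires a line in $C$ itself, not a line in a tangent cone at some point of $C$. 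A cone does not automatically contain its radial rays extended to lines (the cross-section need not be a suspension), so a $k$-symmetric tangent at $p$ gives no global $\R^k$-factor in $C$. The correct cone-splitting lemma in the literature involves two \emph{global} symmetry points of $C$, not a tangent at a non-vertex point.

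The paper avoids this entirely and uses a more elementary inductive chain. It argues by contradiction: assuming $\dim_\HH(\mathcal S_k(\X))>k$, the density estimate for $\HH^{k'}_\infty$ (Lemma~\ref{le:hinfty}) gives a blow-up point, Lemma~\ref{le:metriccone} and Theorem~\ref{thm:stabnc} give a tangent metric cone $\Y$, and upper semicontinuity of $\HH^{k'}_\infty$ along Hausdorff convergence transfers the positive $\HH^{k'}$-mass of the quantitative stratum into $\Y$. A second blow-up at a non-vertex point $z\in S^{\bar\eps}_k(\Y)$, $z\neq y$, yields a tangent cone $\Y'$ which --- because $z$ is not the vertex of a cone --- contains a line and hence splits $\Y'=\R\times\X^1$. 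For $k=0$ this already contradicts $z\in S^{\bar\eps}_0$. For $k\geq1$ one uses Proposition~\ref{prop:prod} to see $\X^1$ is $\ncRCD(0,N-1)$, uses $S_k(\R\times\X^1)=\R\times S_{k-1}(\X^1)$ together with the product formula~\eqref{eq:dimprod} to deduce $\dim_\HH(S_{k-1}(\X^1))>k-1$, and iterates in $k$. Each step splits off exactly one line; no rigidity converting infinitesimal $k$-symmetry into global $(k+1)$-symmetry is ever invoked. If you want to repair your argument, you would need to either prove a correct form of cone-splitting (stated at the level of global symmetries of the cone, controlling the interplay with the radial factor) or replace it by the paper's line-at-a-time induction using tensorization/factorization of $\ncRCD$ spaces. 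Finally, note that Theorem~\ref{thm:contvol} and Theorem~\ref{thm:volrig} do not enter the stratification proof at all; the key continuity input is the elementary upper semicontinuity~\eqref{eq:usc} of the Hausdorff premeasure.
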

Beside these, all the other properties of $\ncRCD$ spaces that we are able to prove hold in the a priori larger class of \emph{weakly} non-collapsed spaces, which we now introduce.

\bigskip

For $K\in\R$, $N\in[1,\infty)$ and $r\geq 0$ let us consider the volume of the ball of radius $r$ in the reference `space form' defined by
\[
v_{K,N}(r):=\omega_N\int_0^r\big|s_{K,N}(t)\big|^{N-1}\,\d t,
\]
where $\omega_N:=\frac{\pi^{N/2}}{\int_0^\infty t^{N/2}e^{-t} \,\d t}$ coincides for integer \(N\) with the volume of the unit ball in \(\R^N\)  and 
\[
s_{K,N}(r):=
\begin{cases}
\sqrt{\frac{N-1}{K}}\sin(r\sqrt{\frac{K}{N-1}}),\qquad&\text{ if }K>0,\\
r,&\text{ if }K=0,\\
\sqrt{\frac{N-1}{|K|}}\sinh(r\sqrt{\frac{|K|}{N-1}}),&\text{ if }K<0.
\end{cases}
\]
Then the Bishop-Gromov inequality, which is valid in the class of $\MCP(K,N)$ spaces {(see \cite{Ohta07} and \cite{Sturm06II} and recall that an $\RCD(K,N)$ space is also $\MCP(K,N)$)}, states that
\begin{equation}\label{e:BG}
r\mapsto \frac{\mm(B_r(x))}{v_{K,N}(r)}\qquad\text{is decreasing}
\end{equation}
for any $x\in\supp(\mm)$. Therefore the following definition if meaningful:
\begin{definition}[Bishop-Gromov density]\label{def:density} Let $K\in\R$, {$N\in [1,\infty)$} and let $(\X,\sfd,\mm)$ be a $\MCP(K,N)$ space with $\supp(\mm)=\X$. For \(x\in \X\) we define the {\em  Bishop-Gromov density} at \(x\) as 
\begin{equation}\label{e:densitydef}
\vartheta_{N}[\X,\sfd,\mm](x):=\lim_{r \to 0} \frac{\mm(B_r(x))}{v_{K,N}(r)}=\sup_{r >0} \frac{\mm(B_r(x))}{v_{K,N}(r)}.
\end{equation}
\end{definition}
Notice that by the very definition of \(v_{K,N}(r)\) we have
\begin{equation}\label{e:limite}
\lim_{r\to 0} \frac{v_{K,N}(r)}{\omega_N r^N}=1,
\end{equation}
hence
\begin{equation}\label{e:densitysempl}
\vartheta_{N}[\X,\sfd,\mm](x)=\lim_{r \to 0} \frac{\mm(B_r(x))}{\omega_Nr^N},
\end{equation}
whence the choice of omitting the $K$ in the notation of the Bishop-Gromov density. Still,   the definition \eqref{e:densitydef} allows to directly exploit \eqref{e:BG} and this simplifies some proofs.

We note that for an \(\RCD(K,N)\) space the Bishop-Gromov density can be equal to \(\infty\) at almost every  point,  a simple example being  the \(\RCD(0,N)\) space \(([0,\infty), \sfd_{\Eu}, x^{N-1}\,\d \LL^1)\), where here and in the sequel  \(\sfd_{\Eu}\) will denote the euclidean distance. In a sense what is happening in this example is that there is a gap between the `functional analytic' upper bound on the dimension \(N\) of the space and  its `geometric' dimension.  This motivates the following:
\begin{definition}[Weakly non-collapsed $\RCD$ spaces]\label{def:wnc} Let $K\in\R$ and {$N\in [1,\infty)$}. We say that  $(\X,\sfd,\mm)$ is a \emph{weakly non-collapsed $\RCD(K,N)$}, $\wncRCD(K,N)$ in short, space  provided it is $\RCD(K,N)$, it holds $\supp(\mm)=\X$ and
\[
\vartheta_{N}[\X,\sfd,\mm](x)<+\infty\qquad\text{for } \mm-\text{a.e. \(x\)}.
\]
\end{definition}
Notice that by classical results about differentiation of measures (see e.g.\ Lemma \ref{le:AT}), if $\HH^N$ is a Radon measure on $\X$ we know that
\[
\lims_{r\downarrow0}\frac{\HH^N(B_r(x))}{\omega_Nr^N}\leq1\qquad\HH^N-a.e.\ x\in\X
\]
and thus in particular
\[
\text{a non-collapsed $\RCD(K,N)$ space is also weakly non-collapsed},
\]
see Corollary \ref{cor:densnc}. 

Also, from \eqref{e:BG} it follows that $\vartheta_{N}$ is lower-semicontinuous both as a function on the fixed $\RCD(K,N)$ space $(\X,\sfd,\mm)$ and along a pmGH-converging sequence (see Lemma \ref{lm:propt}). This easily implies  the stability of the weakly non-collapsed condition  w.r.t.\ pmGH-convergence, see Theorem \ref{thm:stabweak}.
\begin{remark}{\rm
By analogy with the properties of Ricci-limit spaces obtained in \cite{Cheeger-Colding96,Cheeger-Colding97I,Cheeger-Colding97II,Cheeger-Colding97III,ColdingNaber12} we believe that 
\[
\begin{split}
&\text{if $(\X,\sfd,\mm)$ is  $\RCD(K,N)$ and $\vartheta_{N}[\X,\sfd,\mm]<\infty$ on a set of positive $\mm$-measure,}\\
&\text{then up to multiply $\mm$ by a { positive constant} the space is $\ncRCD(K,N)$}
\end{split}
\]
and in particular that any weakly non-collapsed space is, up to multiply the measure by a positive constant,  non-collapsed. Note in particular that a consequence of the above property would be the constancy of the dimension of \(\RCD(K,N)\) spaces in the case when there is at least a \(N\)-dimensional piece\footnote{During the revision process of this manuscript, Bru\`e and Semola proved in \cite{BS18} that finite dimensional $\RCD$ spaces have constant dimension regardless of such `maximality' condition.}. This fact is proved, in full generality,  for  Ricci limit spaces by Colding and Naber in \cite{ColdingNaber12}. 
}\fr\end{remark}
The geometric significance of the finiteness of $\vartheta_N$ is mostly based on the fact that
\begin{equation}
\label{eq:tancon}
\text{if $\vartheta_N[\X,\sfd,\mm](x)<\infty$ then every tangent space at $x$ is a metric cone}
\end{equation}
which in turn follows directly from the `volume cone to metric cone' property of $\RCD$ spaces obtained by the authors in \cite[Theorem 1.1]{DPG16} (see Proposition \ref{le:metriccone} for the proof of \eqref{eq:tancon}).

With this said, we have the following equivalent characterizations  of weakly non-collapsed spaces:

\begin{theorem}\label{thm:wncpropp}
 Let $K\in\R$, {$N\in [1,\infty)$} and let $(\X,\sfd,\mm)$ be a $\RCD(K,N)$ space with \(\supp \mm=\X\). Then the following are equivalent:
 \begin{itemize}
 \item[(i)] \(\X\) is a $\wncRCD(K,N)$ space.
\item[(ii)] \(\mm\ll  \HH^N\). 
 \item[(iii)] There exists a function \(\vartheta_1\in L^1_{loc}(\HH^N)\) such that \(\mm=\vartheta_1 \HH^N\). 
 \item[(iv)]  \(N\) is integer and for \(\mm\)-a.e. \(x\in \X\) there exists a constant \(\vartheta_2(x)\) such that
   \[
    (\X,\sfd/{r},\mm/r^N,x)\overset{\textrm{ pmGH}}{\longrightarrow} (\R^N, \sfd_{\Eu},\vartheta_2 (x)\LL^N,0) \qquad\text{ as }r\downarrow0.
   \]
\item[(v)] \(N\) is integer and for \(\mm\)-a.e. \(x\in \X\) it holds
   \[
    (\X,\sfd/{r},\mm/c_r,x)\overset{\textrm{ pmGH}}{\longrightarrow} (\R^N, \sfd_{\Eu},\LL^N/c(N),0) \qquad\text{ as }r\downarrow0,
   \]
where
\[
c_r:=\int_{B^{\X}_r(x)}\Big(1-\frac{\sfd(y,x)}{r}\Big)\d\mm(y)\qquad c(N):=\int_{B^{\R^N}_1(0)}\big(1-|y|\big)\d\LL^N(y).
\]
\item[(vi)] $N$ is integer and for \(\mm\)-a.e. \(x\in \X\) it holds
   \[
    (\X,\sfd/{r},x)\overset{\textrm{ pGH}}{\longrightarrow} (\R^N, \sfd_{\Eu},0)\qquad\text{ as }r\downarrow0 .
   \]
 \item[(vii)] The  tangent module \(L^2(T\X)\)  has constant dimension equal to  \(N\).
 \end{itemize}
Moreover  in the above statements
 \begin{equation}
\label{eq:stessedens}
 \vartheta_1(x)=\vartheta_2(x)=\vartheta_N[\X,\sfd,\mm](x)<+\infty\qquad\textrm{for \(\mm\)-a.e. \(x\)}.
\end{equation}
Finally, if any of these holds then {(referring to \cite{Gigli14} for the necessary definitions) it holds}
\begin{equation}
\label{eq:hesslap}
H^{2,2}(\X){=} D(\Delta)\qquad\text{ and }\qquad {\rm tr}\,\H f=\Delta f\quad\forall f\in H^{2,2}(\X).
\end{equation}
 \end{theorem}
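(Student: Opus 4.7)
My strategy is to close a cycle of equivalences through conditions (i)--(v) and the pGH item (vi), and then derive the tangent-module item (vi), the density identity \eqref{eq:stessedens}, and the identity \eqref{eq:hesslap} as byproducts. The easy conversions are (ii)$\Leftrightarrow$(iii), which is Radon--Nikodym together with local finiteness of $\mm$, and (iii)$\Rightarrow$(i): by Lemma \ref{le:AT} one has $\limsup_{r\downarrow 0}\HH^N(B_r(x))/(\omega_N r^N)\le 1$ at $\HH^N$-a.e. $x$, while Lebesgue differentiation in the doubling space gives $\mm(B_r(x))/\HH^N(B_r(x))\to\vartheta_1(x)$ at $\mm$-a.e. $x$; combined, $\vartheta_N(x)\le\vartheta_1(x)<+\infty$ at $\mm$-a.e. $x$, which is (i).

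The heart of the argument is (i)$\Rightarrow$(iv). Fix $x$ with $\vartheta_N(x)<+\infty$. By the rectifiability Theorem \ref{thm:rett}, at $\mm$-a.e. $x$ the pmGH-tangent computed with the normalisation $\mm/\mm(B_r(x))$ exists and equals $(\R^{k(x)},\sfd_{\Eu},(1/\omega_{k(x)})\LL^{k(x)},0)$ for some integer $k(x)\in\{1,\dots,\lfloor N\rfloor\}$. Finiteness of $\vartheta_N(x)$ together with \eqref{e:BG} gives the uniform bound $\mm(B_{\rho r}(x))/r^N\leq\omega_N\vartheta_N(x)\rho^N(1+o(1))$, so the un-normalised blow-ups $(\X,\sfd/r,\mm/r^N,x)$ are themselves pmGH-precompact. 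Any subsequential limit measure $\mu$ on the metric tangent satisfies $\mu(B_\rho(0))=\lim_{n\to\infty}\mm(B_{\rho r_n}(x))/r_n^N=\omega_N\vartheta_N(x)\rho^N$ for every $\rho>0$, while being $c(x)\omega_{k(x)}\rho^{k(x)}$ by the Mondino--Naber identification. Matching these expressions for every $\rho>0$ forces $k(x)=N$, proves $N\in\N$, and gives $c(x)=\vartheta_N(x)$; this is (iv) with $\vartheta_2(x)=\vartheta_N(x)$.

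Next I would derive (iv)$\Rightarrow$(iii) together with the density identity. Theorem \ref{thm:contvol} applied to the GH-convergence $(\overline B_r(x),\sfd/r)\to\overline B_1^{\R^N}(0)$ yields $\HH^N(B_r(x))/(\omega_N r^N)\to 1$ at $\mm$-a.e. $x$, so $\mm(B_r(x))/\HH^N(B_r(x))\to\vartheta_2(x)$; writing the Lebesgue decomposition $\mm=f\HH^N+\mm_s$ (valid locally since $\HH^N$ is Radon by Bishop--Gromov and $\X$ is doubling), standard differentiation in doubling spaces gives $\HH^N(B_r(x))/\mm(B_r(x))\to 0$ at $\mm_s$-a.e.\ $x$, incompatible with the positive limit $1/\vartheta_2(x)$; hence $\mm_s\equiv 0$ and $\vartheta_1=\vartheta_2=\vartheta_N$. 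For (iv)$\Rightarrow$(v), the function $y\mapsto(1-\sfd(y,x)/r)_+$ is continuous and supported in the unit ball of $(\X,\sfd/r)$, so weak convergence gives $c_r/r^N\to\vartheta_2(x)\,c(N)$ and hence $\mm/c_r\to\LL^N/c(N)$; forgetting the measure gives the pGH item (vi). To close the loop, from the pGH item (vi) to (iii) I invoke Theorem \ref{thm:rett} once more: the metric part of the pmGH-tangent must coincide with the pGH-tangent $\R^N$, forcing $k(x)=N$ $\mm$-a.e., and on the resulting top-dimensional biLipschitz atlas the uniqueness of blow-ups excludes a singular part of the pushforward of $\mm$ to $\R^N$, yielding $\mm\ll\HH^N$.

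The tangent-module item (vi) is equivalent to the others through the identification, due to Gigli's nonsmooth differential calculus, of the local dimension of $L^2(T\X)$ at $x$ with the Euclidean dimension $k(x)$ of the pmGH-tangent; constant dimension $N$ amounts exactly to $k(x)\equiv N$ $\mm$-a.e. For the final claim \eqref{eq:hesslap}, once $L^2(T\X)$ is locally free of rank $N$ I would pick a local orthonormal frame $\{e_i\}_{i=1}^N$, compute $\tr\H f=\sum_i\H f(e_i,e_i)$, and match it with $\Delta f$ via the integration-by-parts identities that define $\H f$ and $\Delta f$ in Gigli's calculus, placing $H^{2,2}(\X)\subset D(\Delta)$. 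The main obstacle is the implication from the pGH item (vi) to (iii), where purely metric information on tangent cones must be lifted to the measure-theoretic statement $\mm\ll\HH^N$ via the rectifiable structure; a secondary subtlety, in the step (i)$\Rightarrow$(iv), is the pmGH-precompactness of the un-normalised blow-ups $\mm/r^N$, which relies on the quantitative bound $\vartheta_N(x)<+\infty$ rather than merely finite normalised mass.
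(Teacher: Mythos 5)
Your cycle of implications is correct, but the proof of the pivotal implication (i)$\Rightarrow$(iv) takes a genuinely different route from the paper. The paper proceeds through the approximate-continuity Lemma~\ref{lm:propt}(ii), then Proposition~\ref{prop:tang}: at an approximate continuity point of $\vartheta_N$ the tangent cone is shown to be a volume cone with respect to \emph{every} base point (not just $o$), so by the volume-cone-to-metric-cone theorem of \cite{DPG16} it is an $N$-metric-measure cone at every point; Lemma~\ref{lm:favaa} then identifies it with a Euclidean space. You instead combine the Bishop--Gromov precompactness of the $\mm/r^N$-scaled blow-ups (valid because $\vartheta_N(x)<\infty$) with the Mondino--Naber/Kell--Mondino/Gigli--Pasqualetto rectifiability picture encoded in Theorem~\ref{thm:rett}: matching the exact scaling $\mu(B_\rho(o))=\omega_N\vartheta_N(x)\rho^N$ of any subsequential limit against the rectifiable normalization $c\,\omega_{k(x)}\rho^{k(x)}$ forces $k(x)=N$. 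This bypasses \cite{DPG16} and Lemma~\ref{lm:favaa} entirely, and is arguably shorter, at the cost of relying from the start on the full strength of Theorem~\ref{thm:rett} (and of implicitly using, as the paper also does, the uniqueness of the normalized pmGH-tangent at $\mm$-a.e.\ point, which is a reading of \cite{MK16,GP16-2} slightly beyond what the text of Theorem~\ref{thm:rett} literally states). Two further points of divergence: (a) your derivation of (iv)$\Rightarrow$(iii) and of the identity \eqref{eq:stessedens} uses Theorem~\ref{thm:contvol} (a later, independent, but much heavier ingredient); the paper keeps the equivalences self-contained and never needs volume continuity here, obtaining \eqref{eq:stessedens} directly from the decomposition \eqref{eq:decX}, \eqref{eq:mac} and \eqref{eq:densk}; and (b) the final assertion \eqref{eq:hesslap} is \emph{not} a one-line frame argument --- passing from constant dimension of $L^2(T\X)$ to $\mathrm{tr}\,\H f=\Delta f$ on all of $H^{2,2}(\X)$ is the substantive content of \cite[Proposition 4.1]{Han14}, which the paper is careful to invoke by citation rather than rederive; your sketch via an orthonormal frame and integration by parts merely gestures at that proof. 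Your implications (ii)$\Leftrightarrow$(iii), (iii)$\Rightarrow$(i), (iv)$\Rightarrow$(v)$\Rightarrow$pGH-(vi)$\Rightarrow$(iii), and the module item run essentially parallel to the paper's.
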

\begin{remark}{\rm We believe that if $(\X,\sfd,\mm)$ is an $\RCD(K,N)$ space for which \eqref{eq:hesslap} holds, then there exists $n\in\N$, $n\in[1,N]$, such that $(\X,\sfd,\mm)$ is a weakly non-collapsed $\RCD(K,n)$ space. Notice that according to Han's results in \cite{Han14}, this would be true if one knew that the tangent module has constant dimension, in which case one should pick $n$ to be such dimension\footnote{As already mentioned, Bru\`e and Semola recently proved in \cite{BS18} that indeed finite dimensional $\RCD$ spaces have constant dimension. As a consequence of their result, the conjecture in this remark holds.
}. 
}\fr\end{remark}
\begin{remark}\label{re:Kita}{\rm The definition proposed by Kitabeppu in \cite{Kita17} in our formalism reads as:  $\vartheta_N[\X](x)<\infty$ for \emph{every} $x\in\X$ (in particular such  spaces are weakly non-collapsed in our sense). Then in \cite{Kita17}  it has been proved that such spaces have many of the  properties stated in Theorem \ref{thm:wncpropp}, see \cite[Theorem 1.4]{Kita17}, and it has also been noticed that \eqref{eq:tancon} holds. Our proofs of these facts are essentially the same as those in \cite{Kita17}.
}\fr\end{remark}

We conclude mentioning that   the characterization of non-collapsed spaces via blow-ups allows to deduce that `products' and `factorizations' of (weakly) non-collapsed spaces are still (weakly) non-collapsed, see Proposition \ref{prop:prod} and compare it with the non-trivial behaviour - even on $\R^n$ - of products of Hausdorff measures, see e.g.\ \cite[2.10.29]{Federer69}.

\bigskip

{\bf Acknowledgements} The authors wish to thank Jeff Cheeger for a series of inspiring conversations at the early stage of development of this work. { They also would like to thank the anonymous referees for the careful reading of the manuscript and for their comments which helped us to improve the presentation}

\noindent G.D.P.\ is supported by the MIUR SIR-grant `Geometric Variational Problems' (RBSI14RVEZ). 

\noindent N.G.\ is supported by the MIUR SIR-grant `Nonsmooth Differential Geometry' (RBSI147UG4).

\section{Weakly non-collapsed spaces}
\subsection{Stability}\label{se:stab}
In this section we  prove the stability of the class of $\wncRCD(K,N)$ spaces w.r.t.\ pointed-measured-Gromov-Hausdorff convergence.

 In all the upcoming discussion, a \emph{metric space} is always a complete and separable space (sometimes we will consider convergence of open balls in such spaces, but this creates no problems in the definition of Gromov-Hausdorff convergence) and a \emph{metric measure space} is a metric space equipped with a non-negative and {\em non-zero} Radon measure which is finite on bounded sets. { Moreover, by $C(\alpha,\beta,\gamma,\ldots)$ we will always intend a constant whose value depends on the parameters $\alpha,\beta,\gamma,\ldots$ and nothing else.}

Let us begin recalling some basic definitions that will be used throughout the text. The Hausdorff (semi-)distance between two subsets $A,B$ of a metric space $\Y$ is given by
\[
\sfd_{\rm H}(A,B):=\inf\{\eps\geq 0\ :\ B\subset A^\eps\text{ and } A\subset B^\eps\}
\]
where $A^\eps$ denotes the $\eps$-neighbourhood of $A$, i.e. the  set of points at distance $<\eps$ from $A$.

With this said, we now  recall the definitions of the various kind of Gromov-Hausdorff convergences that we shall use. Notice that for the case of pointed and pointed-measured convergences our definitions are not really the correct ones in the general case, but given that we will always deal with geodesic metrics and uniformly locally doubling measures, our (simplified) approach is equivalent to the correct definitions, see for instance the discussions in \cite[Chapter 3]{Gromov07}, \cite[Section 8.1]{BBI01}, \cite[Section 3.5]{GMS15}.

\begin{definition}[Gromov-Hausdorff convergences]\label{def:GH}
Let $(\X_n,\sfd_n)$, $n\in\N\cup\{\infty\}$ be  metric spaces. We say that $(\X_n,\sfd_n)$ converges to $(\X_\infty,\sfd_\infty)$ in the Gromov-Hausdorff (GH in short) sense provided there exist a metric spaces $(\Y,\sfd_\Y)$ and isometric embeddings $\iota_n:\X_n\to\Y$, $n\in\N\cup\{\infty\}$, such that 
\[
\sfd_{\rm H}\big(\iota_n(\X_n),\iota_\infty(\X_\infty)\big)\to 0\qquad\text{ as }n\to\infty.
\]
If the spaces are pointed, i.e.\ selected points $x_n\in\X_n$ are given, we say that $(\X_n,\sfd_n,x_n)$ converges to $(\X_\infty,\sfd_\infty,x_\infty)$ in the pointed-Gromov-Hausdorff (pGH in short) sense provided there exist a metric spaces $(\Y,\sfd_\Y)$ and isometric embeddings $\iota_n:\X_n\to\Y$, $n\in\N\cup\{\infty\}$, such that:
\begin{itemize}
\item[i)] $\iota_n(x_n)\to\iota_\infty(x_\infty)$ in $\Y$,
\item[ii)] for every $R>0$ we have
\[
\sfd_{\rm H}\big(\iota_n(B_R(x_n)),\iota_\infty(B_R(x_\infty))\big)\to 0\qquad\text{ as }n\to\infty.
\]
\end{itemize}
If moreover  the spaces $\X_n$ are  endowed with Radon measures $\mm_n$ finite on bounded sets, we say that $(\X_n,\sfd_n,\mm_n,x_n)$ converges to $(\X_\infty,\sfd_\infty,\mm_\infty,x_\infty)$ in the pointed-measured-Gromov-Hausdorff (pmGH in short)  sense provided there are $\Y$ and $(\iota_n)$ satisfying $(i),(ii)$ above and moreover it holds:
\begin{itemize}
\item[iii)] $((\iota_n)_*\mm_n)$ weakly converges to $(\iota_\infty)_*\mm_\infty$, i.e.\ for every $\varphi\in C_b(\Y)$ with bounded support we have
\[
\int\varphi\,\d(\iota_n)_*\mm_n\to\int\varphi\,\d(\iota_\infty)_*\mm_\infty\qquad\text{ as }n\to\infty.
\]
\end{itemize}
In any of these cases, the collection of the space $\Y$ and isometric embeddings $(\iota_n)$ is called \emph{realization} of the convergence and in any of these cases, given $z_n\in\X_n$, $n\in\N\cup\{\infty\}$, we say that   $(z_n)$ converges to $z_\infty$, and write $z_n\stackrel{GH}\to z_\infty$ provided there exists a realization such that
\[
\lim_{n\to\infty}\sfd_\Y\big(\iota_n(z_n), \iota_\infty(z_\infty)\big)=0.
\]
\end{definition}
Notice that in presence of non-trivial automorphism of the limits space $\X_\infty$ it might be that the same sequence $(z_n)$ converges to two different points $z_\infty,z_\infty'\in\X_\infty$. This creates no issues in the foregoing discussion.

We shall frequently use, without further reference, the fact that the class of $\RCD(K,N)$ spaces is closed w.r.t.\ pmGH-convergence  (see \cite{Lott-Villani09}, \cite{Sturm06I}, \cite{Sturm06II}, \cite{AmbrosioGigliSavare11-2}, \cite{Gigli12}, \cite{GMS15}).

Since the Bishop-Gromov inequality \eqref{e:BG} implies that the measure is locally doubling, we can use Gromov's compactness theorem  (see \cite[Section 5.A]{Gromov07}) to deduce  that
\begin{equation}
\label{eq:Gcomp}
\begin{split}
&\text{if  $(\X_n,\sfd_n,\mm_n,x_n)$, $n\in\N$, are $\RCD(K_n,N)$ spaces with  $N\in[1,\infty)$, $\supp(\mm_n)=\X_n$,}\\
&\text{$\mm_n(B_1(x_n))\in[v,v^{-1}]$ for every $n\in\N$ and some $v\in(0,1)$ and $K_n\to K\in\R$,}\\
&\text{then there is a subsequence pmGH-converging to some $\RCD(K,N)$ space}\\
&\text{$(\X,\sfd,\mm,x)$ with $\supp(\mm)=\X$ and a realization  with $\Y$  proper.}\\
\end{split}
\end{equation} 
{ Recall that a metric space is \emph{proper} provided closed bounded sets are compact. }
Notice that a direct consequence of the definitions is that
\begin{equation}
\label{eq:exseq}
\forall y_\infty\in \X_\infty\quad\text{ there exists }y_n\in\X_n,\ n\in\N\text{ such that }y_n\stackrel{GH}\to y_\infty.
\end{equation}

We also recall that on $\MCP(K,N)$ spaces {(see \cite{Ohta07} and \cite{Sturm06II})} $(\X,\sfd,\mm)$ with $\supp(\mm)=\X$ and $N<\infty$, from the spherical version of Bishop-Gromov inequality  - see \cite[Inequality (2.4)]{Sturm06II} -  it holds
\begin{equation}
\label{eq:boundz}
\mm(B_r(x))=\mm(\bar B_r(x))\qquad\forall x\in\X,\ r>0
\end{equation} 
and in turn this easily implies that if  $(\X_n,\sfd_n,\mm_n,x_n)\stackrel{pmGH}\to(\X_\infty,\sfd_\infty,\mm_\infty,x_\infty)$ and all such spaces are $\MCP(K,N)$ with measures of full support, then
\begin{equation}
\label{eq:convballs}
y_n\stackrel{GH}\to y_\infty\qquad\Rightarrow\qquad\mm_n(B_r(y_n))\to \mm_\infty(B_r(y_\infty))\quad\forall r>0,
\end{equation}
{
which easily follows by the weak convergence of the measures and by the fact that \(\mm (\partial B_{r}(y_\infty))=0\) under our assumptions { (from \eqref{eq:boundz})}.
}
Let us now collect some basic simple properties of the Bishop-Gromov density:
\begin{lemma}[Basic properties of the Bishop-Gromov density]\label{lm:propt} Let  \(K\in \R\), \( N\in [1,\infty)\). Then: 
\begin{itemize}
\item[(i)] Let  \((\X_j,\sfd_j,\mm_j,\bar x_j)\) be a sequence of pointed \(\MCP(K,N)\) spaces pmGH-converging to a limit $\MCP(K,N)$ space \((\X_\infty,\sfd_\infty,\mm_\infty,\bar x_\infty)\). Then 
\[
x_j\stackrel{GH}\to x_\infty\qquad\Rightarrow\qquad\liminf_{j\to \infty} \vartheta_N[\X_j,\sfd_j,\mm_j](x_j) \ge \vartheta_N[\X_\infty,\sfd_\infty,\mm_\infty](x_\infty).
\]
In particular, on a given $\MCP(K,N)$ space $(\X,\sfd,\mm)$, the function $\vartheta_N:\X\to[0,\infty]$  is  lower-semicontinuous (and thus Borel measurable).
\item[(ii)] Let $(\X,\sfd,\mm)$ be  a $\MCP(K,N)$ space. Then \(\mm\)-a.e. point \(x\in \{\vartheta_N<\infty\}\) is an approximate continuity point for \(\vartheta_N\), i.e.
\begin{equation}\label{e:contt}
\lim_{r\to 0} \frac{\mm\Big(\big\{y\in B_r(x): |\vartheta_N(y)-\vartheta_N(x)|>\varepsilon\big\}\Big)}{\mm(B_r(x))}=0
\end{equation}
for every \(\eps>0\).
\item[(iii)]  Let $(\X,\sfd,\mm)$ be  a $\MCP(K,N)$ space and, for \(r>0\),  put \((\X_r,\sfd_r,\mm_r)=(\X,\sfd/r,\mm/r^N)\). Then for every \(x\in \X\) we have \(\vartheta_N[\X_r,\sfd_r,\mm_r](x)=\vartheta_N[\X,\sfd,\mm](x)\).
\end{itemize}
\end{lemma}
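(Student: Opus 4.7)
The three items need different tools: (iii) is a direct scaling computation, (i) follows from the supremum representation in \eqref{e:densitydef} together with convergence of ball masses \eqref{eq:convballs}, and (ii) rests on the Lebesgue differentiation theorem in the locally doubling space $(\X,\sfd,\mm)$. I would dispose of them in the order (iii), (i), (ii).

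For (iii) I would just unwind the definitions: the $\sfd_r$-ball of radius $s$ around $x$ coincides with the $\sfd$-ball of radius $rs$, and the rescaled measure assigns it mass $r^{-N}\mm(B_{rs}^\X(x))$. Dividing by $\omega_N s^N=r^{-N}\omega_N(rs)^N$ reproduces the quotient computed in the original space at scale $rs$, so letting $s\downarrow 0$ in \eqref{e:densitysempl} yields $\vartheta_N[\X_r,\sfd_r,\mm_r](x)=\vartheta_N[\X,\sfd,\mm](x)$.

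For (i), fix any $r>0$. Since $x_j\stackrel{GH}{\to}x_\infty$, the convergence of ball masses \eqref{eq:convballs}, itself a consequence of the spherical Bishop-Gromov identity \eqref{eq:boundz}, gives $\mm_j(B_r(x_j))\to\mm_\infty(B_r(x_\infty))$. Hence
\[
\liminf_{j\to\infty}\vartheta_N[\X_j,\sfd_j,\mm_j](x_j)\ \geq\ \liminf_{j\to\infty}\frac{\mm_j(B_r(x_j))}{v_{K,N}(r)}\ =\ \frac{\mm_\infty(B_r(x_\infty))}{v_{K,N}(r)},
\]
and taking the supremum over $r>0$ on the right, together with the sup formula in \eqref{e:densitydef}, delivers the claim. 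The lower semicontinuity of $\vartheta_N$ on a fixed $(\X,\sfd,\mm)$ is the special case $\X_j=\X$ for all $j$; the required $\mm(B_r(x_j))\to\mm(B_r(x_\infty))$ when $x_j\to x_\infty$ in $\X$ follows from $B_{r-\eps}(x_\infty)\subset B_r(x_j)\subset B_{r+\eps}(x_\infty)$ for $j$ large, combined with \eqref{eq:boundz} and letting $\eps\downarrow 0$.

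The only substantial point is (ii). By \eqref{e:BG}, $\mm$ is locally doubling, so the Lebesgue--Besicovitch density theorem is available on $(\X,\sfd,\mm)$: for every Borel set $A\subset\X$, $\mm$-a.e.\ $x\in A$ satisfies $\mm(A^c\cap B_r(x))/\mm(B_r(x))\to 0$ as $r\downarrow 0$. By (i) the function $\vartheta_N$ is Borel, hence each $A_{a,b}:=\{a<\vartheta_N<b\}$ with $a,b\in\Q$ is Borel. Discarding a single $\mm$-null set I obtain a full-measure subset $G\subset\{\vartheta_N<\infty\}$ on which the density property holds simultaneously for every rational pair $(a,b)$. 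For $x\in G$ and $\eps>0$, choosing rationals with $\vartheta_N(x)-\eps<a<\vartheta_N(x)<b<\vartheta_N(x)+\eps$ yields $\{y\in B_r(x):|\vartheta_N(y)-\vartheta_N(x)|>\eps\}\subset A_{a,b}^c\cap B_r(x)$, and \eqref{e:contt} follows. The only potential sticking point is ensuring Lebesgue differentiation in this generality, which is however immediate from the local doubling provided by \eqref{e:BG}.
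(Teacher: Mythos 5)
Your proposal is correct and follows essentially the same route as the paper: (iii) by unwinding the scaling, (i) by combining the supremum representation \eqref{e:densitydef} (i.e.\ Bishop--Gromov monotonicity) with the ball-mass convergence \eqref{eq:convballs}, and (ii) via the Lebesgue differentiation theorem available because \eqref{e:BG} makes $\mm$ locally doubling. The only cosmetic difference is in (ii), where the paper applies Lebesgue differentiation once to the single bounded function $\arctan\vartheta_N$, while you apply the density theorem to the countable family of level sets $\{a<\vartheta_N<b\}$ with rational endpoints; both are equivalent instances of the same underlying argument.
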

\begin{proof} Point $(iii)$ trivially follows from \eqref{e:densitysempl}  and point $(i)$ is a direct consequence of the definitions, of \eqref{eq:convballs} and of the monotonicity granted by the   Bishop Gromov inequality \eqref{e:BG}. 

For point $(ii)$ note that the Bishop-Gromov inequality \eqref{e:BG} grants that \(\mm\) is locally doubling, hence  the Lebesgue differentation Theorem applies to every function \(f\in L^1_{\rm loc}(\X)\):
\begin{equation}\label{e:ld}
\lim_{r\to 0} \frac{1}{\mm(B_r(x))}\int_{B_{r}(x)}|f(y)-f(x)|\d \mm(y)=0\qquad\text{\(\mm\)-a.e. \(x\)}.
\end{equation}
By applying~\eqref{e:ld} to, for instance, \(f(x):=\arctan \vartheta_N(x)\) one easily gets (ii).
\end{proof}
A stability result for the class of $\wncRCD$ spaces now easily follows, see Remark \ref{re:comm} below for some comments on the statement:
\begin{theorem}[Stability of weakly non-collapsed spaces]\label{thm:stabweak}
Let $K\in\R$, $N\in\R$ and let \linebreak $(\X_n,\sfd_n,\mm_n,x_n)$ be a sequence of  $\wncRCD(K,N)$ spaces pmGH-converging to some limit space $(\X,\sfd,\mm,x)$. Assume that for every $R>0$ there is an increasing function $f_R:[0,+\infty]\to[0,+\infty]$ with $f_R(+\infty)=+\infty$ such that
\begin{equation}
\label{eq:lims}
\lims_{n\to\infty}\int_{B^{\X_n}_R(x_n)}f_R\circ\vartheta_N[\X_n,\sfd_n,\mm_n]\,\d\mm_n<\infty.
\end{equation}
Then $(\X,\sfd,\mm)$ is a $\wncRCD(K,N)$ space and for every $R>0$ it holds
\begin{equation}
\label{eq:fatou}
\int_{B^\X_R(x)}f_R\circ\vartheta_N[\X,\sfd,\mm]\,\d\mm\leq\limi_{n\to\infty} \int_{B^{\X_n}_R(x_n)}f_R\circ\vartheta_N[\X_n,\sfd_n,\mm_n]\,\d\mm_n.
\end{equation}
\end{theorem}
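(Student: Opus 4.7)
My plan is to first establish the Fatou-type inequality \eqref{eq:fatou}; once this is in hand, the hypothesis \eqref{eq:lims} forces $\int_{B_R^\X(x)} f_R\circ\vartheta_N[\X,\sfd,\mm]\,\d\mm<+\infty$ for every $R$, and since $f_R(+\infty)=+\infty$ this immediately yields $\vartheta_N[\X,\sfd,\mm]<+\infty$ holding $\mm$-a.e.\ on $\X$. Combined with the stability of the $\RCD(K,N)$ condition under pmGH convergence, together with \eqref{eq:Gcomp} (which also ensures $\supp(\mm)=\X$), this gives that $(\X,\sfd,\mm)$ is $\wncRCD(K,N)$.

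The key input is Lemma \ref{lm:propt}(i): the Bishop--Gromov density is lower semicontinuous along pmGH-converging sequences, i.e.\ $\liminf_n \vartheta_N[\X_n,\sfd_n,\mm_n](z_n)\ge \vartheta_N[\X,\sfd,\mm](z_\infty)$ whenever $z_n\stackrel{GH}\to z_\infty$. Since an increasing function composed with an lsc map is not automatically lsc, the first move is to replace $f_R$ by its lower semicontinuous envelope $\tilde f_R$: this is still increasing with $\tilde f_R(+\infty)=+\infty$ and $\tilde f_R\le f_R$, so \eqref{eq:lims} is preserved and the desired \eqref{eq:fatou} is only weakened. After this reduction I may assume $f_R$ itself is lsc, and then the monotonicity of $f_R$ combined with the pmGH-lsc of $\vartheta_N$ yields that $f_R\circ\vartheta_N$ is pmGH-sequentially lsc, meaning $\liminf_n f_R(\vartheta_N[\X_n,\sfd_n,\mm_n](z_n))\ge f_R(\vartheta_N[\X,\sfd,\mm](z_\infty))$ for any $z_n\stackrel{GH}\to z_\infty$.

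The remaining step is a standard Fatou-type lemma for weakly converging Radon measures with sequentially lsc integrands. Concretely I would fix a proper realization $\Y$ of the pmGH convergence as in \eqref{eq:Gcomp}, truncate by $h_n^M:=\min\{M,\,f_R\circ\vartheta_N[\X_n,\sfd_n,\mm_n]\}$ (still bounded and pmGH-sequentially lsc), choose a continuous cutoff $\phi:\Y\to[0,1]$ supported in the open ball $B_R^\Y(\iota_\infty(x))$, and combine the sequential lsc of $h_n^M$ with the weak convergence $(\iota_n)_*\mm_n\to(\iota_\infty)_*\mm$ to obtain $\int \phi\,h_\infty^M\,\d\mm\le\liminf_n\int \phi\,h_n^M\,\d\mm_n$. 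Monotone convergence as $M\uparrow+\infty$ on the left-hand side, the trivial pointwise bound $h_n^M\le f_R\circ\vartheta_N[\X_n,\sfd_n,\mm_n]$ on the right, and finally $\phi\uparrow\chi_{B_R^\X(x)}$ -- legitimized by \eqref{eq:boundz}, which ensures that the boundary sphere carries no $\mm$-mass -- then deliver \eqref{eq:fatou}.

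I expect the only real difficulty to be the generalized Fatou lemma itself: one must carefully match the sequential lsc formulated across the different $\X_n$ with the weak convergence of the pushforwards in the common proper ambient space $\Y$. The lsc-envelope reduction and the removal of the cutoff via \eqref{eq:boundz} are mildly delicate but elementary; once the ambient-space machinery is in place, the truncation and monotone-limit steps are routine.
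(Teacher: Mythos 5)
Your overall strategy coincides with the paper's: establish \eqref{eq:fatou} via a Fatou-type lemma for weakly converging measures built on the sequential lower semicontinuity of $\vartheta_N$ from Lemma~\ref{lm:propt}(i), then feed \eqref{eq:lims} and $f_R(+\infty)=+\infty$ into it. The paper does exactly this, with the Fatou step delegated to Lemma~\ref{le:fatou} (a variant from~\cite{AmbDepKir11}); your truncation-and-cutoff argument is a reasonable self-contained proof of that same lemma, and the invocation of \eqref{eq:boundz} is unnecessary but harmless.

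You are right to worry that monotonicity alone does not make $f_R\circ\vartheta_N$ sequentially lower semicontinuous --- the paper's phrase ``the monotonicity of $f_R$'' tacitly uses lower semicontinuity of $f_R$, and without it both \eqref{eq:fatou} and the final conclusion can genuinely fail. (For \eqref{eq:fatou}: take $\X_n=(\R^N,\sfd_{\Eu},(1-\tfrac1n)\LL^N,0)\to(\R^N,\sfd_{\Eu},\LL^N,0)$ and $f_R=\chi_{[1,\infty]}$; the left side of \eqref{eq:fatou} is $\omega_N R^N$ while the right side is $0$. For the $\wncRCD$ conclusion: take $\X_n=\R\times[0,\tfrac1n]$ with $n\LL^2$, which pmGH-converges to $(\R,\sfd_{\Eu},\LL^1,0)$ with $\vartheta_2\equiv+\infty$, together with $f_R$ vanishing on $[0,\infty)$ and $f_R(+\infty)=+\infty$, so \eqref{eq:lims} holds trivially.) However, your proposed repair is flawed at the key point: for an increasing $f_R$, the lower semicontinuous envelope satisfies $\tilde f_R(+\infty)=\lim_{t\to\infty}f_R(t)=\sup_{t<\infty}f_R(t)$, which need \emph{not} equal $+\infty$; in the slab example above it is $0$, and the envelope trick then loses all information. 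The correct reading of the hypothesis --- implicit in the paper and satisfied in every actual use of the theorem, e.g.\ $f_R=\mathrm{id}$ in the proof of Theorem~\ref{thm:stabnc} --- is that $f_R$ is increasing and lower semicontinuous (equivalently, for a monotone function: left-continuous with $\lim_{t\to\infty}f_R(t)=+\infty$). With that reading your lsc-envelope reduction is vacuous and the remainder of your argument goes through exactly as in the paper; without it, the statement itself is false and no proof can be saved.
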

\begin{proof} From the stability of the $\RCD$ condition we know that $(\X,\sfd,\mm)$ is $\RCD(K,N)$.
Let $(\Y,\sfd_\Y,(\iota_n))$ be a realization of the pmGH-convergence and define $\tilde\vartheta_n:\Y\to[0,\infty]$ as
\[
\tilde\vartheta_n(y):=\left\{\begin{array}{ll}
\vartheta_N[\X_n,\sfd_n,\mm_n](\iota_n^{-1}(y)),&\qquad\text{ if }y\in\iota(\X_n),\\
+\infty,&\qquad\text{ otherwise}.
\end{array}\right.
\]
and similarly $\tilde\vartheta$. Then from point $(i)$ of Lemma \ref{lm:propt} and the monotonicity of $f_R$ we deduce that
\[
y_n\to y\qquad\Rightarrow\qquad {\nchi_{B_R^\Y(\iota(x))}(y)} f_R(\tilde\vartheta(y))\leq\limi_{n\to\infty}{\nchi_{B_R^\Y(\iota_n(x_n))}(y_n)} f_R(\tilde\vartheta_n(y_n))
\]
 { having also used the  fact that if $y\in B_R^\Y(\iota(x))$ then eventually  $y_n\in B_R^\Y(\iota_n(x_n))$.} By the simple Lemma \ref{le:fatou} below  this inequality and the weak convergence of $(\iota_n)_*\mm_n$ to $\iota_*\mm$ give
\[
\int\nchi_{B_R^\Y(\iota(x))}f_R\circ\tilde \vartheta\,\d \iota_*\mm\leq\limi_{n\to\infty} \int\nchi_{B^{\Y}_R(\iota_n(x_n))}f_R\circ\tilde \vartheta_n\,\d(\iota_n)_*\mm_n,
\]
which is \eqref{eq:fatou}. In particular, taking into account \eqref{eq:lims} we deduce that
\[
\int_{B^\X_R(x)}f_R\circ\vartheta_N[\X,\sfd,\mm]\,\d\mm<\infty
\]
and since $f_R(+\infty)=+\infty$, this forces $\vartheta_N[\X,\sfd,\mm]$ to be finite $\mm$-a.e..
\end{proof}
\begin{remark}\label{re:comm}{\rm It is not hard to check that, in  this last theorem, if all the spaces $\X_n$ are Riemannian manifolds of  the same dimension $k$ converging to a smooth Riemannian manifold \(\X\), then necessarily \(k=N\) and  \(\X\) has dimension \(k\). In particular, the convergence is non-collapsed in the sense of Cheeger and Colding, \cite{Cheeger-Colding97I}.

In this respect the following example might be explanatory:  Let $S^1_r$ be the 1-dimensional sphere of radius $r$ and consider the cylinder $M_n:=\R\times S^1_{1/n}$ equipped with its natural product distance $\sfd_n$ and volume measure ${\rm vol}_n$. It is clear that as $n\to\infty$ the metric spaces $(M_n,\sfd_n)$ converge in the pGH-topology to the real line, which trivially has smaller dimension (since in all these manifolds the isometry group acts transitively, the  choice of reference point is irrelevant and thus omitted). 

Let us now consider convergence of the metric \emph{measure} spaces $(M_n,\sfd_n,{\rm vol}_n)$. Notice that  for any $r>1/n$ and $p_n\in M_n$ we have that 
\begin{equation}
\label{eq:pallepiccole}
{\rm vol}_n(B_r(p_n))\sim \frac rn\qquad\text{ as }\qquad n\to\infty
\end{equation}
and thus the measures $\mm_n$ weakly converge, in any realisation of the pGH-convergence, to 0. However, the choice of the null measure is excluded by the definition of metric measure space - see the beginning of Section \ref{se:stab} -, so that $(\R,\sfd_{\Eu},0)$ is not a legitimate metric measure space and the spaces $(M_n,\sfd_n,{\rm vol}_n)$ do not satisfy the assumptions of Theorem \ref{thm:stabweak} above, because they do not converge anywhere in the pmGH-topology. We emphasize that the choice of excluding null reference measures is customary in this research field, see for instance \cite{GMS15} and references therein for a discussion of this topic in relation to convergence of mm-structures.

The typical way to avoid measures disappearing in the limit is to renormalise them via the multiplication by an appropriate constant: this is precisely what Cheeger-Colding do in \cite{Cheeger-Colding97I}, \cite{Cheeger-Colding97II}, \cite{Cheeger-Colding97III} when defining renormalised limit measure. In our case, by \eqref{eq:pallepiccole} we are led to consider the measures $\mm_n:=c_n{\rm vol}_n$ with $c_n\sim n$, so that the spaces $(M_n,\sfd_n,\mm_n)$ converge in the pmGH-sense to $(\R,\sfd_\Eu,c\mathcal L^1)$ for some $c>0$. Thus  we have $\vartheta_2[M_n,\sfd_n,\mm_n]\equiv c_n\to+\infty$ and $\mm_n(B_r(p_n))\to c\mathcal L^1(B_r(0))=2cr>0$ as $n\to\infty$. Hence for  any  function $z\mapsto f(z)$ going to $+\infty$ as $z\to+\infty$ we have
\[
\int_{B_r(p_n)}f\circ\vartheta_2[M_n,\sfd_n,\mm_n]\,\d\mm_n=f(c_n)\,\mm_n(B_r(p_n))\to+\infty,\qquad\text{ as }n\to+\infty,
\]
so that the assumption \eqref{eq:lims} does not hold in this case.

We conclude pointing out that the notion of (weakly) non-collapsed space makes sense only when coupled with the dimension which is being considered, so that it can very well be that a sequence of 2-dimensional non-collapsed spaces converges, with collapsing, to a 1-dimensional non-collapsed space. This is precisely what happens in the example we discussed here.
}\fr\end{remark}

In the proof of  Theorem \ref{thm:stabweak} we used the following known simple variant of the classical Fatou lemma:
\begin{lemma}[A variant of Fatou's lemma]\label{le:fatou}
Let $(\Y,\sfd_\Y)$ be a complete and separable metric space, $\{\mu_n\}_{n\in\N\cup\{\infty\}}$ be Radon measures finite on bounded sets such that
\[
\lim_{n\to\infty}\int \varphi\,\d\mu_n=\int\varphi\,\d\mu_\infty
\]
for every $\varphi\in C_b(\Y)$ with bounded support. Also, let $f_n:\Y\to\R\cup\{+\infty\}$, $n\in\N\cup\{\infty\}$  be such that 
\begin{equation}
\label{eq:glimi}
y_n\to y\qquad\Rightarrow\qquad f_\infty (y)\leq\limi_{n\to\infty}f_n(y_n)
\end{equation}
and $f_n\geq g$ for every $n\in\N\cup\{\infty\}$ for some $g\in C_b(\Y)$ with bounded support.

Then
\[
\int f_\infty\,\d\mu_\infty\leq\limi_{n\to\infty}\int f_n\,\d\mu_n.
\]
\end{lemma}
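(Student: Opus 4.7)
The plan is a standard reduction followed by a Vitali-style cover argument. First, I would normalize by subtracting $g$: setting $\tilde f_n := f_n - g$ gives non-negative functions, \eqref{eq:glimi} is preserved for $(\tilde f_n)$ by continuity of $g$ (so that $\lim g(y_n)=g(y)$ whenever $y_n\to y$), and $\int g\,\d\mu_n\to\int g\,\d\mu_\infty$ shows that the target inequality for $(f_n)$ is equivalent to the one for $(\tilde f_n)$. So we may assume $g\equiv 0$ and $f_n\geq 0$ throughout. Next, I would pass to the $\Gamma$-$\liminf$ envelope
\[
F(y) := \inf\bigl\{\limi_n f_n(y_n) : y_n\to y\bigr\},
\]
which is automatically lower semicontinuous (hence Borel) and satisfies $f_\infty\leq F$ pointwise by \eqref{eq:glimi}, giving $\int f_\infty\,\d\mu_\infty\leq\int F\,\d\mu_\infty$. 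It therefore suffices to show $\int F\,\d\mu_\infty\leq\limi_n\int f_n\,\d\mu_n$.

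Second, I would approximate $\int F\,\d\mu_\infty$ from below by integrals of Lipschitz functions with bounded support that are \emph{strictly} dominated by $F$ on their support. Set $F_\ell(y):=\inf_z\bigl(F(z)+\ell\,\sfd_\Y(y,z)\bigr)$, the $\ell$-Lipschitz inf-convolution, which satisfies $0\le F_\ell\uparrow F$ pointwise by lower semicontinuity of $F$. For $\ell,R\in\N$, $\eta>0$, and a Lipschitz cutoff $\rho_R\in C_b(\Y)$ of bounded support, define $\varphi:=(F_\ell-\eta)_+\cdot\rho_R\in C_b(\Y)$. Then $0\leq\varphi\leq F$ with bounded support, and on $\supp\varphi$ one has $F\geq\varphi+\eta$: indeed the inequality is clear wherever $\varphi>0$, and by lower semicontinuity of $F-\varphi$ it extends to the closure. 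Monotone convergence gives $\sup_{\ell,R,\eta}\int\varphi\,\d\mu_\infty = \int F\,\d\mu_\infty$, so I only need the estimate $\int\varphi\,\d\mu_\infty\leq\limi_n\int f_n\,\d\mu_n$ for each such $\varphi$.

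The core step uses the neighborhood form of the $\Gamma$-$\liminf$ condition: for every $y\in\Y$ and $\delta>0$ there exist an open $U\ni y$ and $N\in\N$ with $f_n>F(y)-\delta$ on $U$ for all $n\geq N$ (equivalent to the definition of $F$ by a standard diagonal extraction). Combined with continuity of $\varphi$ and the strict gap $F\geq\varphi+\eta$ on $\supp\varphi$, this produces for each $y\in\supp\varphi$ an open neighborhood $U_y\ni y$ and an index $N_y$ with $f_n\geq\varphi$ on $U_y$ for all $n\geq N_y$. Since $\mu_\infty$ is Radon and $\mu_\infty(\supp\varphi)<\infty$, inner regularity yields for every $\varepsilon>0$ a compact $K\subset\supp\varphi$ with $\mu_\infty(\supp\varphi\setminus K)<\varepsilon$; extract a finite subcover $\{U_{y_i}\}_{i=1}^J$ of $K$, set $U:=\bigcup_i U_{y_i}$, $N:=\max_i N_{y_i}$, and choose a Lipschitz cutoff $\chi\in C_b(\Y)$ with $\chi\equiv 1$ on $K$, $\supp\chi\subset U$, $0\leq\chi\leq 1$. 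Then $\chi\varphi\in C_b(\Y)$ has bounded support and $\chi\varphi\leq\varphi\,\mathbf{1}_U\leq f_n$ for $n\geq N$, so weak convergence gives
\[
\int_K\varphi\,\d\mu_\infty \leq \int\chi\varphi\,\d\mu_\infty = \lim_n\int\chi\varphi\,\d\mu_n \leq \limi_n\int f_n\,\d\mu_n,
\]
and letting $\varepsilon\downarrow 0$ together with boundedness of $\varphi$ concludes.

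The main obstacle is that \eqref{eq:glimi} is pointwise (along sequences) while the target is integrated: one cannot assert $\varphi\leq f_n$ on all of $\Y$ for large $n$, and $\supp\varphi$ need not be compact in a general complete separable space. The strict gap $F\geq\varphi+\eta$ is what upgrades the neighborhood form of $\Gamma$-$\liminf$ into a clean pointwise bound $f_n\geq\varphi$ on small open sets, while Radonness of $\mu_\infty$ supplies the compact exhaustion of $\supp\varphi$ needed to extract a finite subcover; the cutoff $\chi$ then converts the pointwise bound into an integral inequality via the hypothesized weak convergence.
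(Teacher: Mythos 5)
Your proof is correct in its essentials, and it is genuinely self-contained where the paper's is not: after the same normalization $f_n\mapsto f_n-g$, the paper simply cites Lemma~8.2 of Ambrosio--De~Philippis--Kirchheim (stated there on $\R^d$) and notes that its proof carries over, without reproducing the argument. Your route --- $\Gamma$-$\liminf$ envelope, monotone approximation by Lipschitz inf-convolutions with bounded support, the ``neighborhood form'' of the $\liminf$ condition, inner regularity and a finite cover, and a Urysohn cutoff against the tested weak convergence --- makes that transfer explicit and is the natural way to verify the claim in a general complete separable metric space.

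Two small repairs are needed. First, the sentence asserting that $F\geq\varphi+\eta$ extends from $\{\varphi>0\}$ to $\supp\varphi$ ``by lower semicontinuity of $F-\varphi$'' is not a valid deduction: for an LSC function $h$ with $h\geq\eta$ on a set $A$ and $y_n\in A$, $y_n\to y$, LSC only gives $h(y)\leq\liminf_n h(y_n)$, which is the wrong direction for concluding $h(y)\geq\eta$. The inequality you want is nevertheless true, but for a different reason: $F_\ell$ is continuous, so $\supp\varphi\subset\overline{\{F_\ell>\eta\}}\subset\{F_\ell\geq\eta\}$, whence $F\geq F_\ell\geq\eta$ there, while $\varphi=0$ on $\supp\varphi\setminus\{\varphi>0\}$. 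Even more simply, you can avoid the closure entirely by choosing the compact set $K$ inside the \emph{open} set $\{\varphi>0\}$ --- inner regularity of $\mu_\infty$ applies there just as well, and $\int_{\supp\varphi}\varphi\,\d\mu_\infty=\int_{\{\varphi>0\}}\varphi\,\d\mu_\infty$ --- which is exactly where the strict gap is genuinely available. Second, if $F\equiv+\infty$ on $\Y$ then $F_\ell\equiv+\infty$ and $\varphi$ is not a real-valued function, so the scheme as written does not launch; the standard patch is to carry out the whole argument with the truncations $\min(F,M)$ in place of $F$ (these are still nonnegative and LSC) and then send $M\uparrow\infty$ by monotone convergence at the very end. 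With these adjustments the proof is complete.
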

\begin{proof}
Replacing $f_n$ with $f_n-g$ we can assume that the $f_n$'s are non-negative. Then we follow verbatim the proof in \cite[Lemma 8.2]{AmbDepKir11} which, although presented on $\R^d$, actually holds also in our context.
\end{proof}
\subsection{Tangent spaces}
In this section we study the tangent spaces of weakly non-collapsed spaces, here is the definition that we will adopt (notice the chosen scaling of the measure):
\begin{definition}[(metric) tangent space]
Let $K\in\R$, $N\in[1,\infty)$, $(\X,\sfd,\mm)$ an $\RCD(K,N)$ space with $\supp(\mm)=\X$ and $x\in\X$.

We say that $(\Y,\sfd_\Y,o)$ is a \emph{metric tangent space} of $\X$ at $x$ if there exists a sequence $r_n\downarrow0$ such that
\[
(\X_{r_n},\sfd_{r_n},x):=(\X,\sfd/r_n,x)\quad\stackrel{pGH}\to\quad (\Y,\sfd_\Y,o)\qquad\text{ as }n\uparrow\infty.
\]
Similarly, we say that $(\Y,\sfd_\Y,\mm_\Y,o)$ is a \emph{tangent space} of $\X$ at $x$ if there exists a sequence $r_n\downarrow0$ such that
\[
(\X_{r_n},\sfd_{r_n},\mm_{r_n},x):=(\X,\sfd/r_n,\mm/{r_n^N},x)\quad \stackrel{pmGH}\to\quad(\Y,\sfd_\Y,\mm_\Y,o)\qquad\text{ as }n\uparrow\infty.
\]
\end{definition} 
Notice that the Bishop-Gromov inequality \eqref{e:BG} gives that $\inf_{r\in(0,1)}\mm_r(B_1(x))>0$ for every $x\in\supp(\mm)$ and if $\vartheta_N(x)<\infty$ then by \eqref{e:densitysempl} we also have that $\sup_{r\in(0,1)}\mm_r(B_1(x))<\infty$. Hence recalling \eqref{eq:Gcomp} we see that given an $\RCD(K,N)$ space $(\X,\sfd,\mm)$ and a point $x\in \X$ with $\vartheta_N(x)<\infty$, the family $(\X_r,\sfd_r,\mm_r,x)$, $r\in(0,1)$, is precompact and{, by scaling, }any limit space as $r\downarrow 0$ is $\RCD(0,N)$.

For the definition of cone built over a metric space see for instance \cite[Definition 3.6.16]{BBI01}. We then give the following:
\begin{definition}[Metric (measure) cones]
We say that $(\X,\sfd)$ is a metric cone with vertex $x\in\X$ provided there is a metric space $(\Z,\sfd_\Z)$ and an isometry $\iota$ between $\X$ and the cone over $\Z$ sending $x$ to the vertex.

If $\X$ is also endowed with a Radon measure $\mm$ we say that  it is a metric measure cone provided there are $\Z,\iota$ as before and moreover there are a Radon measure $\mm_\Z$ on $\Z$ and $\alpha\ge 1$ such that 
\[
\d(\iota_*\mm)(r,z)=\d r\otimes r^{\alpha-1}\,\d\mm_\Z(z).
\]
In this case we say that $\X$ is an $\alpha$-metric measure cone.
\end{definition}
A crucial regularity property of weakly non-collapsed spaces is contained in the following statement, which in turn is a   direct  consequence of the `volume cone to metric cone' for $\RCD$ spaces obtained in \cite[Theorem 1.1]{DPG16}:
\begin{proposition}[Tangent spaces are cones]\label{le:metriccone}
Let $K\in\R$, $N\in[1,\infty)$, $(\X,\sfd,\mm)$  an $\RCD(K,N)$ space and $\bar x\in\X$ such that $\vartheta_N(\bar x)<\infty$. Then every tangent space $(\X_\infty,\d_\infty,\mm_\infty,o)$ at $\bar x$ is an $N$-metric measure cone based in $o$ and it holds
\begin{equation}
\label{eq:stessadens}
\vartheta_N[\X,\d,\mm](\bar x)=\vartheta_N[\X_\infty,\d_\infty,\mm_\infty](o)=\frac{\mm_\infty(B_\varrho(o))}{\omega_N\varrho^N}\qquad\forall\varrho>0.
\end{equation}
\end{proposition}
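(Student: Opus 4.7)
The plan is to reduce to the `volume cone implies metric cone' theorem proved in \cite{DPG16}, after computing the exact volume growth around $o$ in the tangent space.

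First I would realize $(\X_\infty,\d_\infty,\mm_\infty,o)$ as a pmGH-limit of $(\X_{r_n},\sfd_{r_n},\mm_{r_n},\bar x) := (\X,\sfd/r_n,\mm/r_n^N,\bar x)$ along some sequence $r_n\downarrow 0$. Each rescaled space is $\RCD(Kr_n^2,N)$, hence the tangent is $\RCD(0,N)$; in particular \eqref{eq:convballs} is available. The crucial and essentially elementary step is then the following computation: for every fixed $\varrho>0$, combining \eqref{eq:convballs} (applied to the constant sequence $\bar x\stackrel{GH}{\to} o$) with \eqref{e:densitysempl},
\[
\mm_\infty\bigl(B_\varrho(o)\bigr) \;=\; \lim_{n\to\infty}\mm_{r_n}\bigl(B_\varrho(\bar x)\bigr) \;=\; \lim_{n\to\infty}\omega_N\varrho^N\,\frac{\mm\bigl(B_{r_n\varrho}(\bar x)\bigr)}{\omega_N(r_n\varrho)^N} \;=\; \omega_N\varrho^N\,\vartheta_N[\X,\sfd,\mm](\bar x),
\]
where finiteness of the limit uses precisely the hypothesis $\vartheta_N(\bar x)<\infty$.

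This identity says exactly that the Bishop-Gromov ratio $\mm_\infty(B_\varrho(o))/v_{0,N}(\varrho)$ equals the constant $\vartheta_N(\bar x)$ for all $\varrho>0$; in other words, the Bishop-Gromov monotonicity \eqref{e:BG} is saturated at $o$ on every scale. This is precisely the hypothesis of the `volume cone to metric cone' theorem of \cite{DPG16}, which in the $\RCD$ setting promotes such saturation on $(0,\infty)$ to a genuine metric measure cone structure on $(\X_\infty,\d_\infty,\mm_\infty)$ based at $o$. The polynomial growth $\mm_\infty(B_\varrho(o))=\omega_N\varrho^N\vartheta_N(\bar x)$ then forces the exponent in the cone disintegration to be $\alpha=N$, as required by the definition of an $N$-metric measure cone.

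Finally, the chain \eqref{eq:stessadens} follows by inspection of the identity above: the right equality is literally the identity divided by $\omega_N\varrho^N$, while the middle one is obtained by letting $\varrho\downarrow 0$ and applying \eqref{e:densitysempl} on $(\X_\infty,\d_\infty,\mm_\infty)$. The main obstacle I expect is the careful invocation of the volume-cone-to-metric-cone theorem from \cite{DPG16}, since that result provides the only truly non-trivial input; the remainder is a direct interplay between the scaling behaviour of $\vartheta_N$ recorded in Lemma \ref{lm:propt}(iii) and the continuity of ball volumes under pmGH-convergence granted by \eqref{eq:convballs}.
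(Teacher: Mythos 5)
Your proof is correct and follows essentially the same route as the paper: realize the tangent as a pmGH limit of rescalings, use the ball-volume continuity \eqref{eq:convballs} together with the definition of $\vartheta_N$ to show that $\varrho\mapsto\mm_\infty(B_\varrho(o))/\varrho^N$ is constant (equal to $\vartheta_N[\X](\bar x)$), and then invoke the volume-cone-to-metric-cone theorem of \cite{DPG16}. The paper's argument is identical, with \eqref{eq:stessadens} read off from the same identity.
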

\begin{proof}
Let $r_n\downarrow0$ be such that the rescaled spaces $(\X_{r_n},\sfd/r_n,\mm/r_n^N,\bar x)$  pmGH-converge to the $\RCD(0,N)$ space $(\X_\infty,\d_\infty,\mm_\infty,o)$. We shall apply \cite[Theorem 1.1]{DPG16} to the space $\X_\infty$. From the very definition of pmGH-convergence and recalling \eqref{eq:convballs}, for any $\varrho>0$ we have 
\begin{equation}
\label{eq:percono}
\frac{\mm_\infty(B_\varrho(o))}{\omega_N\varrho^N}=\lim_{n\to\infty}\frac{\mm_\infty(B_{r_n\varrho}(\bar x))}{\omega_N(r_n\varrho)^N}=\vartheta_N[\X,\d,\mm](\bar x).
\end{equation}
Hence $\varrho\mapsto\frac{\mm_\infty(B_\varrho(o))}{\varrho^N}$ is constant and according to \cite[Theorem 1.1]{DPG16} this is enough to deduce that $\X_\infty$ is a $N$-metric measure cone based in $o$. Also, letting $\varrho\downarrow0$ in \eqref{eq:percono} we deduce \eqref{eq:stessadens}.
\end{proof}
The fact that tangent cones of $\wncRCD$ spaces are in fact a.e.\ Euclidean spaces is based on the following simple lemma. Notice that the first part of the statement only assumes the space to be a metric cone, and not a metric measure cone: the rigidity is possible because the splitting theorem for $\RCD(0,N)$ spaces only requires the existence of a straight line on the given space and this is a metric requirement (as opposed to a metric-measure requirement). 
\begin{lemma}\label{lm:favaa}
 Let  {$N\in [1,\infty)$} and let $(\X,\sfd,\mm)$ be an  $\RCD(0,N)$ space which, for every $x\in\X$,  is a metric  cone with vertex in \(x\). Then there exists \(m\in \N\) and \(c_m>0\)  such that \((\X,\sfd,\mm)=(\R^m,\sfd_{\rm  E}, c_m \LL^m)\). 
 
 If we also know a priori that $\X$ is an $N$-metric measure cone with vertex $x$ for every $x\in\X$, then $N\in\N$ and $m=N$ in the above.
\end{lemma}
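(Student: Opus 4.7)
The plan is to iterate the splitting theorem after producing a line in $\X$. Assume $\X$ has at least two points (otherwise $m=0$ is trivial). Pick distinct $x,y\in\X$ with $\sfd(x,y)=s_0$. Using the cone property at $y$, extend the geodesic from $y$ to $x$ to an infinite ray $\eta\colon[0,\infty)\to\X$ with $\eta(0)=y$ and $\eta(s_0)=x$; symmetrically, using the cone property at $x$, obtain a ray $\gamma\colon[0,\infty)\to\X$ with $\gamma(0)=x$ and $\gamma(s_0)=y$. Define $\sigma\colon\R\to\X$ by $\sigma(t)=\eta(-t)$ for $t\leq 0$ and $\sigma(t)=\gamma(s_0+t)$ for $t\geq 0$, so that $\sigma(0)=y$. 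The only nontrivial case of the line property is $s\leq 0\leq t$: identifying $\X$ isometrically with a cone $C(Z_y)$ with vertex at $y$, radial uniqueness from the vertex places $\eta$ on a single fibre $\{(\varrho,z_1)\}$ and the tail $\gamma|_{[s_0,\infty)}$ on another $\{(\varrho,z_2)\}$. Imposing that $\gamma$ is a unit-speed geodesic through $y$, via the cone distance formula applied to $\gamma(s_0\pm\epsilon)$, forces $\sfd_{Z_y}(z_1,z_2)\geq\pi$; the cone distance formula then yields $\sfd(\sigma(s),\sigma(t))=t-s$, as required.

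Given this line, the splitting theorem for $\RCD(0,N)$ spaces produces an isomorphism $(\X,\sfd,\mm)\cong(\R\times\X',\sfd_\R\otimes\sfd_{\X'},\LL^1\otimes\mm')$ with $\X'$ an $\RCD(0,N-1)$ space. If $\X'$ has at least two points, apply the line construction above to the pair $(0,x'),(0,y')\in\X$; the constituent rays have initial velocities tangent to the slice $\{0\}\times\X'$, and by non-branching of $\RCD$ spaces together with the product structure of geodesics (a geodesic with initial velocity tangent to a slice remains in that slice) the entire line lies in $\{0\}\times\X'$. This yields a line in $\X'$, so the splitting can be iterated. Each step decreases the effective dimension by one, so the process terminates after finitely many steps, giving $\X\cong\R^m$ as a metric space for some integer $m\leq N$; the iterated measured splitting simultaneously gives $\mm=c_m\LL^m$ with $c_m>0$. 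For the last assertion, if $\X$ is additionally an $N$-metric measure cone at each point $x$, comparing the cone-induced growth $\mm(B_r(x))=A\,r^N$ with the growth $\mm(B_r(x))=c_m\omega_m r^m$ coming from the Euclidean structure forces $N=m\in\N$.

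The most delicate point will be the angle-at-the-vertex argument showing $\sfd_{Z_y}(z_1,z_2)\geq\pi$: one first uses the uniqueness of radial geodesics from a cone vertex to identify $\gamma|_{[0,s_0]}$ reversed with $\eta|_{[0,s_0]}$ near $y$, and then extracts the angle inequality from the condition that $\gamma$ is a geodesic passing through $y$. A secondary technicality in the iteration is checking that the line constructed in $\X$ actually descends to a line in the factor $\X'$; this rests again on non-branching in $\RCD$ spaces combined with the fact that a geodesic in a product metric splits as a pair of geodesics in the factors.
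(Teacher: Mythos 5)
Your proof is correct, but it takes a genuinely different and more explicit route than the paper's.

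The paper's argument is shorter and more abstract: any non-vertex point $x$ lies in the interior of a geodesic (being on a radial ray from any other vertex), and since $\X$ is a metric cone at $x$ the rescalings $(\X,\sfd/r,x)$ are all isometric to $(\X,\sfd,x)$, so every metric tangent cone at $x$ is $\X$ itself; the rescaled geodesic through $x$ then yields a line in this tangent cone, i.e.\ in $\X$. The splitting theorem is then applied and ``iterated''. You instead \emph{construct} a line by gluing the ray $\eta$ (from the cone at $y$) to the ray $\gamma$ (from the cone at $x$), and verify the gluing via the cone distance formula: the radial uniqueness in the $y$-cone identifies $\gamma|_{[0,s_0]}$ reversed with $\eta|_{[0,s_0]}$, and the geodesy of $\gamma$ across $y$ forces $\sfd_{Z_y}(z_1,z_2)\geq\pi$, after which the cone distance gives $\sfd(\sigma(s),\sigma(t))=t-s$. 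This computation is correct. Each approach buys something: the paper's tangent-cone argument is quicker and requires no angle computation, whereas your construction produces a line through \emph{two prescribed points}, which makes the iteration step cleaner — you never need to argue that the factor $\X'$ inherits the cone-at-every-point property (a point the paper leaves implicit), because you run the construction inside $\X$ at the pair $(0,x')$, $(0,y')$ and simply need the line to descend.

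One remark on your iteration step: the appeal to non-branching is both unnecessary and somewhat delicate (non-branching, as opposed to essential non-branching, of $\RCD(K,N)$ spaces is a result that postdates this paper; and even granted, one would still have to argue that the slice geodesic admits an extension in the slice before uniqueness of extensions can be invoked). A cleaner route: in the product $\R\times\X'$, a unit-speed line $\sigma$ decomposes as $\sigma(t)=(a(t),b(t))$ with $a$ a constant-speed geodesic in $\R$ and $b$ a constant-speed geodesic in $\X'$, with squared speeds summing to $1$. Since $\sigma$ passes through $(0,x')$ and $(0,y')$ with $x'\neq y'$, the affine map $a$ vanishes at two distinct times, hence $a\equiv 0$ and $b$ is a unit-speed line in $\X'$. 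This gives the descended line directly from the product structure, with no appeal to non-branching.
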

\begin{proof}
By the very definition of metric cone with vertex \(\bar x\) any point \(x\in \X\setminus\{\bar x\}\) lies in the interior of a half line (i.e. an isometric embedding of \([0,+\infty)\)). Moreover, by assumption, for every \(x\in \X\setminus\{\bar x\}\) and \(r>0\)  the pointed  spaces  \((\X,\sfd/r,x)\) and \((X,\sfd,x)\) are isometric and therefore any metric tangent space at $x$ must coincide with $\X$ itself. Given that $x$ lies in the interior of a length minimising geodesic, the tangent space, and hence $\X$ itself, must contain a line through $x$, see for instance \cite[Proof of Theorem 1.1]{GigliMondinoRajala15} for a similar argument. Thus the splitting theorem for $\RCD$ spaces \cite{Gigli13}, \cite{Gigli13over} grants that  \((\X,\sfd,\mm)\) splits off a line{, i.e.\ it is isomorphic to the product of the Euclidean line $\R$ and a metric measure space \((\X',\sfd',\mm')\). Moreover, such $\X'$ is a point if $N\in[1,2)$ and a $\RCD(0,N-1)$ space if $N\geq 2$}. By iterating this fact finitely many times we obtain the desired conclusion.

The last statement is now obvious.
\end{proof}
We then have the following:
\begin{proposition}\label{prop:tang}
Let $K\in\R$, $N\in[1,\infty)$, $(\X,\sfd,\mm)$ an $\RCD(K,N)$ space with $\supp(\mm)=\X$ and $\bar x\in\X$. Assume that $\bar x$ is a point of approximate continuity of $\vartheta_N[\X]$, i.e.\ $\vartheta_N[\X](\bar x)<\infty$ and \eqref{e:contt}  holds.

Then $N\in\N$ and $(\R^N,\sfd_\Eu,\vartheta_N[\X](\bar x)\mathcal L^N,0)$ is the only tangent space of $\X$ at $\bar x$.
\end{proposition}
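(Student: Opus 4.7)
The plan is to fix any sequence $r_n\downarrow 0$ and show that every pmGH-subsequential limit of the rescalings $(\X,\sfd/r_n,\mm/r_n^N,\bar x)$ coincides with $(\R^N,\sfd_{\Eu},\theta\LL^N,0)$, where $\theta:=\vartheta_N[\X](\bar x)$. Since $\theta<\infty$, the Bishop-Gromov inequality \eqref{e:BG} gives uniform local doubling of the rescaled measures, so \eqref{eq:Gcomp} guarantees existence of at least one subsequential limit $(\Y,\sfd_\Y,\mm_\Y,o)$. By Proposition \ref{le:metriccone} any such $\Y$ is an $\RCD(0,N)$ $N$-metric measure cone with vertex $o$ satisfying $\mm_\Y(B_r(o))=\theta\omega_N r^N$ for every $r>0$; uniqueness of the subsequential limit will upgrade subsequential to full pmGH-convergence.

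The first step is a pointwise lower bound $\vartheta_N[\Y]\geq\theta$ on $\Y$. For any $o'\in\Y$ set $s:=\sfd_\Y(o,o')$; for $r>s$ the inclusion $B_{r-s}(o)\subset B_r(o')$ and the cone identity at $o$ give $\mm_\Y(B_r(o'))/(\omega_N r^N)\geq\theta(1-s/r)^N$. Since the ratio is decreasing in $r$ by Bishop-Gromov on the $\RCD(0,N)$ space $\Y$, sending $r\to\infty$ propagates this bound to every $r>0$. In particular $\vartheta_N[\Y](o')\geq\theta$ for every $o'\in\Y$, and the same monotone comparison together with $\mm_\Y(B_1(o))=\theta\omega_N>0$ yields $\supp(\mm_\Y)=\Y$.

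The matching a.e.\ upper bound is where the approximate continuity hypothesis enters, and this is the technical heart of the proof. Since $\arctan\vartheta_N$ is bounded, Lebesgue differentiation (as used in the proof of Lemma \ref{lm:propt}(ii)) gives $\mean{B_r(\bar x)}\arctan\vartheta_N\,d\mm\to\arctan\theta$ as $r\downarrow0$; rescaling via the scale-invariance from Lemma \ref{lm:propt}(iii) and using \eqref{eq:convballs} for the denominator translates this into
\[
\int_{B_R^{\X_{r_n}}(\bar x)}\arctan\vartheta_N[\X_{r_n}]\,d\mm_{r_n}\;\longrightarrow\;\arctan\theta\cdot\mm_\Y(B_R(o))\qquad\forall R>0.
\]
On the other hand, $\arctan\vartheta_N$ is bounded and lower-semicontinuous along pmGH-converging points by Lemma \ref{lm:propt}(i), so localizing with a continuous cutoff supported in a slight enlargement of $B_R(o)$ and applying the Fatou-type Lemma \ref{le:fatou} gives
\[
\int_{B_R(o)}\arctan\vartheta_N[\Y]\,d\mm_\Y\;\leq\;\arctan\theta\cdot\mm_\Y(B_R(o)).
\]
Combined with the pointwise bound $\arctan\vartheta_N[\Y]\geq\arctan\theta$ from the first step, this forces $\vartheta_N[\Y]=\theta$ $\mm_\Y$-a.e.\ on $B_R(o)$, hence on $\Y$ by sending $R\to\infty$. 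Since $\{\vartheta_N[\Y]>\theta\}$ is open by lsc while $\supp(\mm_\Y)=\Y$, this open set must be empty, i.e.\ $\vartheta_N[\Y]\equiv\theta$ on $\Y$.

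To conclude, $\vartheta_N[\Y]\equiv\theta$ combined with the first-step lower bound and Bishop-Gromov monotonicity forces $\mm_\Y(B_r(o'))/(\omega_N r^N)\equiv\theta$ for every $o'\in\Y$ and every $r>0$. The volume-cone-to-metric-cone result from \cite{DPG16} then makes $\Y$ an $N$-metric measure cone with vertex $o'$ for every $o'\in\Y$, so Lemma \ref{lm:favaa} identifies $\Y$ with $(\R^N,\sfd_{\Eu},c_N\LL^N)$ for some $N\in\N$, and $c_N=\theta$ by \eqref{eq:stessadens}. The main obstacle is the a.e.\ upper bound in the third paragraph: since $\vartheta_N$ is only lsc and may fail to be locally integrable, one cannot test it directly against pmGH-limits, and the trick is to pre-compose with the bounded monotone $\arctan$ and couple Lebesgue differentiation on the source with the lsc-Fatou estimate on the target, upgrading an integral identity to pointwise a.e.\ equality via the pointwise lower bound already in hand.
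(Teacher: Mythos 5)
Your proof is correct and reaches the same conclusion as the paper, but it substitutes a genuinely different argument for the single nontrivial step, namely the upper bound $\vartheta_N[\Y]\leq\vartheta_N[\X](\bar x)$ on the tangent cone $\Y$. The paper exploits approximate continuity geometrically: starting from an arbitrary sequence $\tilde y_n\stackrel{GH}{\to}y$ in the rescaled spaces, it perturbs and diagonalizes to produce $y_n\stackrel{GH}{\to}y$ along which $\vartheta_N[\X](y_n)\to\vartheta_N[\X](\bar x)$, and then the Bishop--Gromov monotonicity directly gives $\frac{\mm_\Y(B_\varrho(y))}{\omega_N\varrho^N}\leq\vartheta_N[\X](\bar x)$ pointwise for \emph{every} $y\in\Y$. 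You instead reinterpret approximate continuity via Lebesgue differentiation (applied to the bounded function $\arctan\vartheta_N$, using the one-dimensional rescaling invariance from Lemma \ref{lm:propt}(iii) and \eqref{eq:convballs} to pass the averages to the blown-up scale), combine this with the lsc/Fatou mechanism of Lemma \ref{le:fatou} to get the bound $\mm_\Y$-a.e.\ on $\Y$, and finally upgrade to \emph{everywhere} through lower semicontinuity of $\vartheta_N[\Y]$ and full support of $\mm_\Y$; the pointwise lower bound that both arguments share then pins down $\vartheta_N[\Y]\equiv\vartheta_N[\X](\bar x)$. Your route is more squarely measure-theoretic and avoids the somewhat informal ``perturbation and diagonalization'' of the paper, at the cost of an extra a.e.-to-everywhere step and of some care in extending $\arctan\vartheta_N$ by $+\infty$ off $\iota_n(\X_n)$ (exactly as in the proof of Theorem \ref{thm:stabweak}) so that the lsc hypothesis of the Fatou lemma is preserved along all converging sequences in the realization; the paper's route is more geometric and yields the pointwise bound directly. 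From there both proofs coincide: $\varrho\mapsto\mm_\Y(B_\varrho(y))/(\omega_N\varrho^N)$ is constant for every $y\in\Y$, the volume-cone-to-metric-cone theorem of \cite{DPG16} applies at every point, and Lemma \ref{lm:favaa} forces $\Y=\R^N$ with the stated constant multiple of Lebesgue measure, the uniqueness of which upgrades subsequential to full pmGH convergence.
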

\begin{proof}
Let \((\X_\infty, \sfd_\infty,\mm_\infty,o)\) be a tangent space at \(\bar x\), let \(r_n\downarrow 0\) be a sequence that realises it and pick $y\in \X_\infty$. We claim that there exists a sequence $n\mapsto y_n\in\X_{r_n}$ such that \(y_n\overset{\text{GH}}{\rightarrow} y\) and
\begin{equation}
\label{eq:conttheta}
 \vartheta_N[X, \sfd, \mm](y_n)\to \vartheta_N[X, \sfd, \mm](\bar x).
\end{equation}
Indeed, let $n\mapsto \tilde y_n\in \X_{r_n}=\X$ be arbitrary such that  \(\tilde y_n\overset{\text{GH}}{\rightarrow} y\) (recall \eqref{eq:exseq}), notice that $\sfd(\tilde y_{r_n},\bar x)\to 0$ and  that the choice of $\bar x$ and the fact that $\mm$ is doubling grant that for every $r,\eps>0$ the balls $B_{rr_n}(\tilde y_n)\subset \X$ must eventually intersect the set $\{x:|\vartheta(x)-\vartheta(\bar x)|<\eps\}$. Hence with a perturbation and diagonalization argument, starting from $(\tilde y_n)$ we can produce the desired $(y_n)$.

With this said, for any   \(\varrho>0\) we have
\begin{equation}
\label{e:limsupp}
\begin{split}
\frac{\mm_\infty(B_\varrho(y))}{\omega_N\varrho^N}&\stackrel{\eqref{eq:convballs}}=\lim_{n\to \infty}\frac{\mm(B_{\varrho r_n} (y_n))}{\omega_N(\varrho r_n)^N}\stackrel{\eqref{e:limite},\eqref{e:BG}}\le \limi_{n\to\infty}   \vartheta_N[X, \sfd, \mm](y_n)\stackrel{\eqref{eq:conttheta}}=\vartheta_N[X, \sfd, \mm](\bar x).
\end{split}
\end{equation}
On the other hand, putting $R:=\sfd_\infty(y,o)$ and using again  the Bishop-Gromov inequality \eqref{e:BG} (recall that $\X_\infty$ is $\RCD(0,N)$) we have
\begin{equation}
\label{e:liminff}
\begin{split}
\frac{\mm_\infty(B_\varrho(y))}{\omega_N\varrho^N}&\stackrel{\eqref{e:BG}}\ge \lim_{r\to \infty} \frac{\mm_\infty(B_r(y))}{\omega_Nr^N}=\lim_{r\to \infty} \frac{\mm_\infty(B_{r+R}(y))}{\omega_N(r+R)^N}\\
&\ge \lim_{r\to \infty} \frac{\mm_\infty(B_{r}(o))}{\omega_Nr^N} \frac{r^N}{(r+R)^N}\stackrel{\eqref{eq:stessadens}}=\vartheta_N[X, \sfd, \mm](\bar x).
\end{split}
\end{equation}
From  \eqref{e:limsupp} and \eqref{e:liminff} we deduce that 
\begin{equation}
\label{eq:rN}
\text{$\varrho\mapsto \frac{\mm_\infty(B_\varrho(y))}{\omega_N\varrho^N}$ is constantly equal to }\vartheta_N[X, \sfd, \mm](\bar x)
\end{equation} 
and from \cite{DPG16} we can then deduce that $(\X_\infty,\sfd_\infty,\mm_\infty,y)$ is a $N$-metric measure cone. Then  arbitrariness of  $y\in \X_\infty$ and the simple Lemma \ref{lm:favaa} above give the conclusion.
\end{proof}

\subsection{Equivalent characterizations of weakly non-collapsed spaces}
Here we shall prove Theorem \ref{thm:wncpropp} about different equivalent characterizations of weakly non-collapsed spaces.

We shall make use of the  following classical result about differentiation of measures, see e.g.\ \cite[Theorem 2.4.3]{AmbrosioTilli04} for the proof.
\begin{lemma}[Density w.r.t.\ Hausdorff measures]\label{le:AT} Let $(\X,\sfd)$ be a complete and separable metric space, $\mm$ a Radon measure on it and for $\alpha\geq 0$ define the $\alpha$-upper density function as:
\[
\bar\vartheta_\alpha(\mm,x):=\lims_{r\downarrow0}\frac{\mm(B_r(x))}{\omega_\alpha r^\alpha}.
\]
Then for every Borel $B\subset \X$ and $c>0$ it holds
\begin{align}
\label{eq:basso}
\bar\vartheta_\alpha(\mm,x)&\geq c\qquad\forall x\in B\qquad\Rightarrow\qquad \mm(B)\geq c\HH^\alpha(B),\\
\label{eq:aalto}
\bar\vartheta_\alpha(\mm,x)&\leq c\qquad\forall x\in B\qquad\Rightarrow\qquad \mm(B)\leq c2^\alpha\HH^\alpha(B),
\end{align}
\end{lemma}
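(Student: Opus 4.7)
I would handle the two inequalities independently by standard covering arguments in metric spaces.

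For the upper bound \eqref{eq:aalto}, a direct Carath\'eodory-style cover suffices. Fix $\eta>0$ and stratify $B=\bigcup_k B_k$ where
\[
B_k := \{x\in B : \mm(B_r(x))\le(c+\eta)\omega_\alpha r^\alpha\text{ for every }r\le1/k\}.
\]
The hypothesis $\bar\vartheta_\alpha(\mm,x)\le c$ on $B$ ensures $B=\bigcup_k B_k$. For fixed $k$, let $\{E_i\}$ be any countable Borel cover of $B_k$ with $\diam(E_i)<1/k$. For every $i$ with $E_i\cap B_k\neq\emptyset$ pick $x_i\in E_i\cap B_k$, so that $E_i\subset \overline B(x_i,\diam(E_i))$ and
\[
\mm(E_i\cap B_k) \le (c+\eta)\omega_\alpha\diam(E_i)^\alpha = (c+\eta)2^\alpha\omega_\alpha(\diam(E_i)/2)^\alpha.
\]
Summing in $i$ and infimizing over such covers gives $\mm(B_k)\le(c+\eta)2^\alpha\HH^\alpha(B)$, and letting $k\uparrow\infty$ and then $\eta\downarrow 0$ yields \eqref{eq:aalto}.

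For the lower bound \eqref{eq:basso}, I would invoke Vitali's covering theorem for the Radon measure $\mm$ on the underlying metric space. Given $\eta,\delta>0$, define
\[
\mathcal F := \big\{\overline B(x,r) : x\in B,\ r<\delta,\ \mm(\overline B(x,r))\ge(c-\eta)\omega_\alpha r^\alpha\big\},
\]
which by hypothesis is a fine cover of $B$. Vitali's theorem produces a countable disjoint subfamily $\{\overline B(x_i,r_i)\}\subset\mathcal F$ with $\mm\bigl(B\setminus\bigcup_i\overline B(x_i,r_i)\bigr)=0$, and by outer regularity of $\mm$ the balls can be arranged to lie in an open $U\supset B$ with $\mm(U)\le\mm(B)+\eps$. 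Consequently
\[
\sum_i\omega_\alpha r_i^\alpha \le \frac{1}{c-\eta}\sum_i\mm(\overline B(x_i,r_i)) \le \frac{\mm(B)+\eps}{c-\eta},
\]
while the balls cover $B$ up to an $\mm$-null residual. Using the elementary comparison $\HH^\alpha\le\mathcal S^\alpha$ between standard and spherical Hausdorff measures (both normalized by $\omega_\alpha$) and letting $\delta,\eta,\eps\downarrow 0$ would give $\HH^\alpha(B)\le\mm(B)/c$, provided the residual can be controlled.

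The principal obstacle is precisely this last point: obtaining the \emph{clean} constant $c$, as opposed to the $5^\alpha c$ produced by a naive $5r$-covering lemma, demands Vitali's theorem for Radon measures, which yields a genuinely disjoint almost-cover rather than a $5r$-dilation cover. A secondary subtlety is that the residual left uncovered is $\mm$-null but a priori \emph{not} $\HH^\alpha$-null; handling it cleanly requires either an iterated Vitali extraction (the density lower bound descends to the residual unchanged, so each iteration removes a disjoint batch on which an $\HH^\alpha$-estimate holds), or invoking a stronger form of Vitali's theorem that produces a disjoint cover up to a set of small $\HH^\alpha$-measure directly.
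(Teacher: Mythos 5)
The paper does not give its own proof of this lemma; it cites \cite[Theorem 2.4.3]{AmbrosioTilli04}, so the comparison below is with the standard argument. Your proof of \eqref{eq:aalto} is correct and is essentially that standard argument.

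For \eqref{eq:basso} there is a genuine gap. You correctly observe that a naive $5r$-covering degrades the constant to $5^{\alpha}c$ and propose to recover the sharp constant by invoking the Vitali covering theorem for Radon measures, i.e.\ the version producing a genuinely disjoint family that covers the set up to an $\mm$-null residual. That theorem is not available at the stated level of generality: it rests on the Besicovitch covering property, which holds in $\R^N$ and in special classes of metric spaces (directionally limited; or when $\mm$ is doubling) but fails for a general Radon measure on a general complete separable metric space, which is precisely the setting of the lemma. The secondary point you flag, that the residual is $\mm$-null but need not be $\HH^\alpha$-null, is a genuine additional problem, and neither of the remedies you sketch actually closes it (the iterated extraction does not obviously terminate nor control $\HH^\alpha_\delta$ of the nested residuals).

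In fact the sharp constant falls out of the plain $5r$-Vitali lemma together with a vanishing-tail observation, with no Besicovitch-type input. Fix $\tau<c$, $\delta>0$, $\eps>0$ and, assuming as one may that $\mm(B)<\infty$, an open $U\supset B$ with $\mm(U)<\mm(B)+\eps$. The family
\[
\mathcal F:=\big\{\overline B(x,r)\,:\,x\in B,\ r<\delta,\ \overline B(x,r)\subset U,\ \mm(B_r(x))>\tau\,\omega_\alpha r^\alpha\big\}
\]
contains balls of arbitrarily small radius about each $x\in B$. Extract a countable disjoint subfamily $\{\overline B(x_i,r_i)\}$ by the greedy recursion underlying the $5r$-lemma (radii chosen in nearly decreasing order). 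For $x\in B\setminus\bigcup_{i\le N}\overline B(x_i,r_i)$ one may pick $\overline B(x,r)\in\mathcal F$ so small that it avoids $\overline B(x_1,r_1),\ldots,\overline B(x_N,r_N)$; maximality and the radius comparison in the greedy construction then produce some $j>N$ with $x\in\overline B(x_j,5r_j)$. Hence for every $N$
\[
\HH^\alpha_{10\delta}(B)\;\le\;\sum_{i\le N}\omega_\alpha r_i^\alpha\;+\;5^\alpha\sum_{i>N}\omega_\alpha r_i^\alpha .
\]
Disjointness inside $U$ gives $\sum_i\omega_\alpha r_i^\alpha\le\tau^{-1}\sum_i\mm(B_{r_i}(x_i))\le\tau^{-1}\mm(U)<\infty$, so the tail vanishes as $N\to\infty$; the factor $5^\alpha$ multiplies only this vanishing tail. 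Letting $N\to\infty$, then $\delta,\eps\downarrow0$ and $\tau\uparrow c$, yields $c\,\HH^\alpha(B)\le\mm(B)$. This is the missing idea in your proposal: you do not need a disjoint almost-cover, you need a disjoint family whose $5$-dilations cover and whose summed radii already finite, so that the overshoot is confined to a negligible tail.
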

Let us point out a direct consequence of the above which, being based on the Bishop-Gromov inequality only, is valid on general $\MCP(K,N)$ spaces:
\begin{proposition}\label{prop:basehn}
Let $K\in\R$, $N\in[1,\infty)$ and  $(\X,\sfd,\mm)$  a $\MCP(K,N)$ space with $\supp(\mm)=\X$. Then for every $R>0$ there is $C=C(K,N,R)$ such that for every $x\in\X$   it holds
\begin{equation}
\label{eq:hnm}
\HH^N\restr{B_R(x)}\leq\frac{ C(K,N,R)}{\mm(B_1(x))}\mm\restr{B_R(x)}.
\end{equation}
In particular, $\HH^N$ is a Radon measure on $\X$, is absolutely continuous w.r.t.\ $\mm$ and it holds
\begin{equation}
\label{eq:hnsfere}
\HH^N(B_r(x))=\HH^N(\bar B_r(x))\qquad\forall x\in\X,\ r>0.
\end{equation} 
\end{proposition}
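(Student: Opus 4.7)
My plan is to obtain the key estimate \eqref{eq:hnm} by producing a uniform lower bound on the $N$-upper density $\bar\vartheta_N(\mm,\cdot)$ on the ball $B_R(x)$, and then to apply the elementary comparison \eqref{eq:basso} from Lemma~\ref{le:AT}. Concretely, for any $y\in B_R(x)$ I have $B_1(x)\subset B_{R+1}(y)$, so $\mm(B_{R+1}(y))\geq\mm(B_1(x))$. The Bishop--Gromov inequality \eqref{e:BG}, which is the defining feature of $\MCP(K,N)$ spaces, then gives, for every $r\in(0,R+1]$,
\[
\mm(B_r(y))\ \geq\ \frac{v_{K,N}(r)}{v_{K,N}(R+1)}\,\mm(B_{R+1}(y))\ \geq\ \frac{v_{K,N}(r)}{v_{K,N}(R+1)}\,\mm(B_1(x)).
\]
Dividing by $\omega_N r^N$ and letting $r\downarrow 0$, by \eqref{e:limite} I conclude that
\[
\bar\vartheta_N(\mm,y)\ \geq\ \frac{\mm(B_1(x))}{v_{K,N}(R+1)}\qquad\forall y\in B_R(x).
\]

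Now I apply \eqref{eq:basso} to an arbitrary Borel set $B\subset B_R(x)$ with the constant $c:=\mm(B_1(x))/v_{K,N}(R+1)$, which yields
\[
\HH^N(B)\ \leq\ \frac{v_{K,N}(R+1)}{\mm(B_1(x))}\,\mm(B).
\]
This is exactly \eqref{eq:hnm} with $C(K,N,R):=v_{K,N}(R+1)$, an explicit constant depending only on $K,N,R$. Absolute continuity $\HH^N\ll\mm$ is immediate from this inequality, as is local finiteness: every bounded set is contained in some $B_R(x)$ on which $\HH^N$ is controlled by $\mm$, which is Radon. Combined with inner regularity (inherited from the metric structure) this shows that $\HH^N$ is a Radon measure on $\X$.

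Finally, to obtain \eqref{eq:hnsfere} I exploit the analogous property \eqref{eq:boundz} for the reference measure $\mm$, which is recalled in the paper as a consequence of the spherical Bishop--Gromov inequality on $\MCP(K,N)$ spaces. Since $\bar B_r(x)\setminus B_r(x)\subset\{y:\sfd(x,y)=r\}$, \eqref{eq:boundz} tells me that this set has $\mm$-measure zero, and then the absolute continuity $\HH^N\ll\mm$ just established forces $\HH^N(\bar B_r(x)\setminus B_r(x))=0$, proving the equality. The only genuinely non-routine ingredient here is the density lower bound, which is entirely driven by Bishop--Gromov; no $\RCD$ machinery or rectifiability is needed, which is why the statement holds in the broader $\MCP(K,N)$ setting.
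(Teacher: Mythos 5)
Your proof is correct and follows essentially the same route as the paper: a uniform lower bound on $\bar\vartheta_N(\mm,\cdot)$ over $B_R(x)$ from Bishop--Gromov, then \eqref{eq:basso} of Lemma~\ref{le:AT}, then \eqref{eq:boundz} for \eqref{eq:hnsfere}. The only (cosmetic) difference is that you compare against $B_{R+1}(y)$ rather than $B_{2R}(y)$, which handles all $R>0$ without the implicit assumption $R\geq 1$ that the paper's choice of constant requires.
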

\begin{proof} The Bishop-Gromov inequality \eqref{e:BG} implies that $\vartheta_{N}[\X,\sfd,\mm](y)\geq \frac{\mm(B_{2R}(y))}{v_{K,N}(2R)}\geq  \frac{\mm(B_{1}(x))}{v_{K,N}(2R)}$ for every $y\in B_R(x)$ { and  $R>1$. Also,}  from \eqref{e:densitysempl} we know that $\bar\vartheta_N(\mm,y)=\vartheta_{N}[\X,\sfd,\mm](y)$ for every $y\in\X$. Hence \eqref{eq:hnm}  comes from  \eqref{eq:basso} and then \eqref{eq:hnsfere} follows from \eqref{eq:boundz}.
\end{proof}
Before coming to the proof of Theorem \ref{thm:wncpropp} let us collect in the following statement the known rectifiability properties of $\RCD$ spaces:
\begin{theorem}[Rectifiability of $\RCD$ spaces]\label{thm:rett}
Let $K\in\R$, $N\in[1,\infty)$ and $(\X,\sfd,\mm)$ be an $\RCD(K,N)$ space. Then we can write
\begin{equation}
\label{eq:decX}
\X=\mathcal N\cup\bigcup_{k=1}^M\bigcup_{j\in \N} U^k_j
\end{equation}
for Borel sets $\mathcal N, U^k_j$ where  $\mm(\mathcal N)=0$,  \(M\in \N\), \(M\le N\),    each \(U^k_j\) is  bi-Lipschitz to a subset of \(\R^{k}\),  and for $\mm$-a.e.\ $x\in U^k_j$ the metric tangent space  at $x$ is the Euclidean space $\R^k$.
Moreover for any $j,k$ it holds
\begin{equation}
\label{eq:mac}
\mm \restr{ U^k_j}=\vartheta_j^k\HH^{k} \restr{ U^k_j}
\end{equation}
for some Borel function $\vartheta_j^k:\X\to\R$ which also satisfies
\begin{equation}
\label{eq:densk}
\vartheta_j^k(x)=\lim_{r\downarrow0}\frac{\mm(B_r(x)\cap U_j^k)}{\omega_kr^k}=\lim_{r\downarrow0}\frac{\mm(B_r(x))}{\omega_kr^k}\qquad \HH^k\restr{U_j^k}-a.e.\ x.
\end{equation} 
\end{theorem}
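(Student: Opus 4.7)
The plan is to combine the Mondino--Naber type decomposition into approximately Euclidean charts with the absolute continuity of $\mm$ with respect to the top-dimensional Hausdorff measure in each chart, and finally to identify the density as the pointwise volume ratio. I would proceed in four stages.

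First, I would prove the \emph{measurable stratification by regular dimension}. For each $k\in\{1,\dots,\lfloor N\rfloor\}$ call a point $x\in\X$ \emph{$k$-regular} if $(\R^k,\sfd_\Eu,\omega_k^{-1}\LL^k,0)$ is a (metric measure) tangent at $x$, and let $\mathcal R_k$ denote the set of such points. Mondino--Naber's analysis of tangents of $\RCD(K,N)$ spaces via Cheeger--Colding style almost-splitting arguments (applied after passing to a tangent, which is $\RCD(0,N)$ by the scaling invariance) shows that $\mm$-a.e. point of $\X$ lies in some $\mathcal R_k$ with $k\leq N$, and uniqueness of the integer $k$ at a given point is guaranteed by the splitting theorem (if two different factors $\R^k,\R^{k'}$ appeared as iterated tangents at the same point one could use non-collapsing of the Bishop--Gromov density as in Lemma~\ref{lm:favaa} to contradict maximality). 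Each $\mathcal R_k$ is Borel.

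Second, I would construct \emph{bi-Lipschitz charts}. Fix $k$ and $x\in\mathcal R_k$. Choose $k$ harmonic functions (or rather $\delta$-splitting maps) $u_1,\dots,u_k:B_r(x)\to\R$ whose $L^2$-Hessians are small and whose pairwise inner products $\langle\nabla u_i,\nabla u_j\rangle$ are close in $L^2$ to $\delta_{ij}$. Such maps exist on arbitrarily small balls around any $k$-regular point thanks to the almost-rigidity in the almost-splitting theorem of Gigli in the $\RCD$ setting (and the harmonic approximation in \cite{Mondino-Naber14}). A standard maximal function/Lusin argument then produces a Borel set $E\subset B_r(x)$ of density $1$ at $x$ on which $u=(u_1,\dots,u_k)$ is bi-Lipschitz onto its image. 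Countably many such $E$'s cover $\mathcal R_k$ up to an $\mm$-null set, and relabeling gives the sets $U^k_j$.

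Third, I would establish the measure formulas \eqref{eq:mac} and \eqref{eq:densk}. On $U^k_j$ the Bishop--Gromov inequality \eqref{e:BG} gives a uniform local upper bound on $\mm(B_r(y))/r^k$ (since $k\leq N$, but only after normalizing by a uniformly positive constant on the chart); more importantly, on $\mathcal R_k$ the tangent is $\R^k$, so $\mm(B_r(y))/(\omega_k r^k)$ has both a well defined positive and finite limit for $\mm$-a.e. $y\in U^k_j$, namely the Bishop--Gromov-type density $\vartheta^k_j(y)$ at scale $k$. Applying Lemma~\ref{le:AT} with $\alpha=k$ on $U^k_j$ then forces $\HH^k\llcorner U^k_j$ and $\mm\llcorner U^k_j$ to be mutually absolutely continuous with Radon-Nikodym derivative exactly $\vartheta^k_j$, and the differentiation theorem for Radon measures on the doubling space $\X$ identifies this derivative with both limits in \eqref{eq:densk}. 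The equality of the two limits in \eqref{eq:densk} uses that $\mm(B_r(x)\setminus U^k_j)=o(r^k)$ at $\mm$-a.e. density point $x$ of $U^k_j$, which is classical for doubling measures.

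The main obstacle I expect is the \emph{bi-Lipschitz charts} step: the existence of $\delta$-splitting maps on arbitrarily small scales around a regular point, together with the quantitative control needed to conclude bi-Lipschitz behaviour on a large subset, is precisely the technical heart of \cite{Mondino-Naber14} and its subsequent refinements, relying on second-order calculus on $\RCD$ spaces and the Abresch--Gromoll/Cheng--Yau type gradient estimates in the synthetic setting. Without the almost-splitting theorem this would be hopeless; with it the rest of the proof is a relatively standard combination of Vitali-type covering, Lusin approximation, and the differentiation Lemma~\ref{le:AT}.
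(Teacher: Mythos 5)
The decomposition into bi-Lipschitz charts $U^k_j$ and the Lebesgue-density/differentiation bookkeeping for passing between the two limits in \eqref{eq:densk} are fine and essentially what the paper does. The problem is concentrated in your Stage~3, and it is a genuine gap, not a cosmetic one.

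You assert that ``on $\mathcal R_k$ the tangent is $\R^k$, so $\mm(B_r(y))/(\omega_k r^k)$ has both a well defined positive and finite limit for $\mm$-a.e.\ $y\in U^k_j$.'' This does not follow from the Mondino--Naber tangent structure you invoke, and it is in fact the hard point. In \cite{Mondino-Naber14} the rescaled measure in the tangent cone is normalized by $\mm(B_r(x))$, not by $r^k$; pmGH convergence $(\X,\sfd/r,\mm/\mm(B_r(x)),x)\to(\R^k,\sfd_\Eu,c\,\LL^k,0)$ only controls the \emph{ratios} $\mm(B_{\rho r}(x))/\mm(B_r(x))\to c\,\omega_k\rho^k$ and carries no information about the absolute scale $\mm(B_r(x))/r^k$, which could a priori tend to $0$, to $+\infty$, or oscillate (Bishop--Gromov only bounds $\mm(B_r)/r^N$, which for $k<N$ forces $\mm(B_r)/r^k\to 0$ whenever $\mm(B_r)\asymp r^N$, so positivity of the $k$-density is not free). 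The existence of a finite positive limit of $\mm(B_r(x))/(\omega_k r^k)$, i.e.\ the mutual absolute continuity $\mm\restr{U^k_j}\sim\HH^k\restr{U^k_j}$ in \eqref{eq:mac}, is precisely the content of the later results of \cite{MK16} and \cite{GP16-2} that the paper cites; it was an open problem after \cite{Mondino-Naber14} and is not a corollary of the tangent cone theorem plus Lemma~\ref{le:AT}. Your subsequent use of Lemma~\ref{le:AT} and the density-point argument (``$\mm(B_r(x)\setminus U^k_j)=o(r^k)$'') both quietly presuppose $\mm(B_r(x))\asymp r^k$, so the whole Stage~3 is circular without the input of \cite{MK16}, \cite{GP16-2}. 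The paper's own proof is a short deduction \emph{from} that cited input: once \eqref{eq:mac} is granted, $\mm\restr{U^k_j}$ is a $k$-rectifiable measure in the sense of Ambrosio--Kirchheim, the first equality in \eqref{eq:densk} is \cite[Thm.~5.4]{AmbKir00}, and the second follows from \eqref{eq:basso} by an elementary contradiction.

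A secondary, softer issue: your argument for uniqueness of the regular dimension $k$ at a given point via Lemma~\ref{lm:favaa} and ``non-collapsing of the Bishop--Gromov density'' is not available here, because in the general (possibly collapsed) $\RCD(K,N)$ setting the Bishop--Gromov density $\vartheta_N$ can be infinite everywhere, and Lemma~\ref{lm:favaa} requires the space to be a cone with respect to \emph{every} vertex, not merely to have a splitting tangent. The $\mm$-a.e.\ uniqueness of the tangent $\R^k$ claimed in the theorem comes from density-point arguments inside the bi-Lipschitz charts of \cite{Mondino-Naber14}, and should simply be cited as such rather than re-derived by an incorrect route.
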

\begin{proof} The existence of the partition \eqref{eq:decX}, of bi-Lipschitz charts and the fact that metric tangent spaces are Euclidean have all been proved in  \cite{Mondino-Naber14}. Property \eqref{eq:mac} has been proved in  \cite{MK16}, \cite{GP16-2}. These informations together grant that $\mm\restr{U_j^k}$ is a $k$-rectifiable measure  according to \cite[Definition 5.3]{AmbKir00}, hence the first equality in\eqref{eq:densk} follows from  \cite[Theorem 5.4]{AmbKir00}. To conclude, notice that if 
\[
\vartheta_j^k(x)<\lims_{r\downarrow0}\frac{\mm(B_r(x))}{\omega_kr^k}
\]
holds in a Borel set $A\subset U_j^k$ of positive $\HH^k$-measure, then we can find $b>a\geq 0$ and a Borel set $A'\subset A$ such that $\HH^N(A')>0$,  $\vartheta_j^k\leq a$ $\HH^k$-a.e.\ on $A'$ and $\lims_{r\downarrow0}\frac{\mm(B_r(x))}{\omega_kr^k}\geq b$ for $x\in A'$. 

This would lead to 
\[
\mm(A')\stackrel{\eqref{eq:basso}}\geq b\HH^k(A')>a\HH^N(A')\stackrel{\eqref{eq:mac}}\geq  \mm(A'),
\]
which is impossible. This proves the second equality in \eqref{eq:densk} and concludes the proof { (see also \cite{AHT17} for similar arguments)}.
\end{proof}

We are now ready to prove Theorem~\ref{thm:wncpropp}.
\begin{proofb}{\bf of Theorem \ref{thm:wncpropp}}\\
\noindent{\bf{(i) $\Rightarrow$ (ii)}} By \eqref{eq:aalto} we know that $\mm\restr{\{\theta<+\infty\}}\ll\HH^N$ and since by hypothesis we have that $\mm(\{\vartheta_N=+\infty\})=0$, the claim follows. 

\noindent{\bf{(ii) $\Rightarrow$ (iii)}} Proposition \ref{prop:basehn} grants that $\HH^N$ is $\sigma$-finite, hence the claim follows by the Radon-Nikodym theorem.

\noindent{\bf{(iii) $\Rightarrow$ (i)}} We consider the decomposition \eqref{eq:decX} and notice that the assumption  \(\mm=\vartheta_1\HH^N\) and \eqref{eq:mac}  forces \(\mm(U^k_j)=0\) for every \(k<N\) and \(j\in \N\) and, since $\mm(X)>0$, \(N\) to be an integer. Hence for every $j$ we have
\begin{equation}
\label{eq:per19}
\vartheta_N[\X](x)\stackrel{\eqref{e:densitysempl}}=\lim_{r\to 0}\frac{\mm(B_r(x))}{\omega_Nr^N}\stackrel{\eqref{eq:densk}}=\vartheta_1(x)<+\infty\qquad\textrm{for \(\mm\restr{U_j^N}\)-a.e. \(x\)}.
\end{equation}
\noindent{\bf{(i) $\Rightarrow$ (iv)}} Consequence of the assumptions, point $(ii)$ of Lemma \ref{lm:propt} and Proposition \ref{prop:tang}, which also grant that 
\begin{equation}
\label{eq:per192}
\vartheta_N=\vartheta_2\qquad \mm\textrm{-a.e.}.
\end{equation}
\noindent{\bf{(iv) $\Rightarrow$ (v)}} This is immediate, since one can easily check that
\[
c_r/r^N=\int_{B_1^{\X_r}}(1-\sfd_r(y,x))\d \mm_r(y)\qquad\to\qquad{ \vartheta_2(x)} \int_{B_1^{\R^N}} (1-|y|)\d\LL^N(y). 
\]
\noindent{\bf{(v) $\Rightarrow$ (vi)}}  Trivial by definitions.

\noindent{\bf{(vi) $\Rightarrow$ (ii)}} By Theorem \ref{thm:rett} we know that for every $k,j$, for $\mm$-a.e.\ $x\in U^k_j$ the metric tangent space at $x$ is $\R^k$. Thus our assumption forces $\mm$ to be concentrated on $\cup_j U^N_j$ and the conclusion follows recalling \eqref{eq:mac}.

\noindent{\bf{(vi) $\Leftrightarrow$ (vii)}} This is an immediate consequence of~\cite[Theorem 5.1]{GP16}.

\noindent{\bf{Proof of \eqref{eq:stessedens}}} Consequence of \eqref{eq:per19} and \eqref{eq:per192}.

\noindent{\bf{(vii) $\Rightarrow$ \eqref{eq:hesslap}}} This follows from~\cite[Proposition 4.1]{Han14}.
\end{proofb}
An easy consequence of the above is:
\begin{corollary}[$\ncRCD\Rightarrow\wncRCD$]\label{cor:densnc}
Let $(\X,\sfd,\HH^N)$ be  a $\ncRCD(K,N)$ space. Then
\begin{equation}
\label{eq:cor}
\vartheta_N(x)\leq 1\qquad\forall x\in\X.
\end{equation}
In particular, {$(\tilde\X,\sfd,\HH^N)$} is $\wncRCD(K,N)${, where $\tilde \X\subset \X$ is the support of $\HH^N$}.
\end{corollary}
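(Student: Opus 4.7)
The plan is to combine three ingredients already available in the excerpt: (a) the classical fact that the upper $N$-density of $\HH^N$ is $\leq 1$ almost everywhere, (b) the Bishop--Gromov monotonicity, which forces the upper density to actually coincide with $\vartheta_N$, and (c) the lower semicontinuity of $\vartheta_N$ together with the fact that on a $\ncRCD$ space $\HH^N$ has full support, in order to upgrade an a.e.\ bound into a pointwise one.

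First I would record the a.e.\ upper density bound. By Proposition~\ref{prop:basehn} applied with $\mm=\HH^N$, the Hausdorff measure $\HH^N$ is Radon, hence $\sigma$-finite. Writing $A_n:=\{x\in\X:\bar\vartheta_N(\HH^N,x)>1+1/n\}$ (a Borel set, since $\vartheta_N$ is Borel measurable by Lemma~\ref{lm:propt}(i) and, as we are about to observe, $\bar\vartheta_N(\HH^N,\cdot)=\vartheta_N(\cdot)$), applying \eqref{eq:basso} of Lemma~\ref{le:AT} with $\mm=\HH^N$, $\alpha=N$ and $c=1+1/n$ on $A_n\cap B$ for any ball $B$ of finite $\HH^N$-measure yields $\HH^N(A_n\cap B)\geq(1+1/n)\HH^N(A_n\cap B)$, hence $\HH^N(A_n\cap B)=0$. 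Exhausting $\X$ by such balls gives $\HH^N\big(\{\bar\vartheta_N(\HH^N,\cdot)>1\}\big)=0$.

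Next I would identify $\bar\vartheta_N$ with $\vartheta_N$ on the whole space. By Bishop--Gromov \eqref{e:BG} the ratio $r\mapsto \HH^N(B_r(x))/v_{K,N}(r)$ is monotone decreasing, so the $\limsup$ as $r\downarrow 0$ is a genuine limit; combined with \eqref{e:limite} this means $\bar\vartheta_N(\HH^N,x)=\vartheta_N[\X,\sfd,\HH^N](x)$ for every $x\in\X$. Hence the previous step rewrites as $\vartheta_N(x)\leq 1$ for $\HH^N$-a.e.\ $x$.

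Finally I would extend this inequality to every point. Since $(\X,\sfd,\HH^N)$ is an $\RCD$ space, $\supp(\HH^N)=\X$, so the $\HH^N$-full-measure set $\{\vartheta_N\leq 1\}$ is dense in $\X$. Given any $x\in\X$, pick $y_n\to x$ in this dense set; lower semicontinuity of $\vartheta_N$ (Lemma~\ref{lm:propt}(i)) then gives
\[
\vartheta_N(x)\leq\liminf_{n\to\infty}\vartheta_N(y_n)\leq 1,
\]
proving \eqref{eq:cor}. Since in particular $\vartheta_N<\infty$ everywhere, Definition~\ref{def:wnc} immediately yields that the space is $\wncRCD(K,N)$. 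The only subtle point is the last density-plus-lsc argument: the a.e.\ statement is cheap, but turning it into a pointwise bound uses crucially that the $\RCD$ axiom forces $\supp(\HH^N)=\X$ together with the one-sided semicontinuity of $\vartheta_N$ inherited from Bishop--Gromov monotonicity.
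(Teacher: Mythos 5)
Your proof is correct, and it reaches the crucial a.e.\ bound $\vartheta_N\leq1$ by a genuinely different and more elementary route than the paper's; the final upgrade from an $\HH^N$-a.e.\ inequality to the pointwise bound \eqref{eq:cor} via full support and lower semicontinuity (Lemma~\ref{lm:propt}(i)) is identical in both arguments. The paper obtains $\vartheta_N\leq1$ $\HH^N$-a.e.\ by feeding $\mm=\HH^N$ into characterization (iii) of Theorem~\ref{thm:wncpropp} and then invoking the identity \eqref{eq:stessedens}, which ultimately rests on the rectifiability decomposition of Theorem~\ref{thm:rett}. You instead derive the same a.e.\ bound directly from the classical upper-density inequality \eqref{eq:basso} of Lemma~\ref{le:AT} applied to $\mu=\HH^N$, using Proposition~\ref{prop:basehn} for Radon-ness of $\HH^N$ and Bishop--Gromov monotonicity to identify $\bar\vartheta_N(\HH^N,\cdot)$ with $\vartheta_N[\X,\sfd,\HH^N](\cdot)$ (this is precisely \eqref{e:densitysempl}). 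Your route is the one the authors sketch in the introduction just before Definition~\ref{def:wnc}: it completely bypasses the rectifiability machinery behind Theorem~\ref{thm:wncpropp} and needs only classical measure-differentiation plus Bishop--Gromov, whereas the paper's version is a one-liner once the equivalence theorem is already in place. One minor remark: you do not actually need $\sigma$-finiteness --- local finiteness on balls, which is the Radon property itself, already lets you run the exhaustion in your step showing $\HH^N(A_n)=0$.
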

\begin{proof}
By point $(iii)$ of Theorem \ref{thm:wncpropp} and \eqref{eq:stessedens} we see that $\vartheta_N\leq 1$ $\HH^N$-a.e.. Then { \eqref{eq:cor}} follows by the lower semicontinuity of $\vartheta_N$ established in point $(i)$ of Lemma \ref{lm:propt}. \end{proof}

\subsection{Tensorization and factorization}
Given two metric measure spaces $(\X_1,\sfd_1,\mm_1)$ and $(\X_2,\sfd_2,\mm_2)$, by their product we mean the product  $\X_1\times\X_2$ equipped with the distance $\sfd_1\otimes\sfd_2$ defined by
\[
(\sfd_1\otimes\sfd_2)^2\big((x_1,x_2),(x_1',x_2')\big):=\sfd_1^2(x_1,x_1')+\sfd_2^2(x_2,x_2')\qquad \forall x_1,x_1'\in\X_1,\ x_2,x_2'\in\X_2
\]
and the product measure $\mm_1\times\mm_2$.

Recall that the product of an $\RCD(K,N_1)$ and an $\RCD(K,N_2)$ space is  $\RCD(K,N_1+N_2)$  (see \cite{Sturm06II}, \cite{AmbrosioGigliSavare11-2}, \cite{AmbrosioGigliSavare12}).

With this said, thanks to characterization of $\wncRCD$ spaces via blow-ups obtained in Theorem \ref{thm:wncpropp} we can easily prove that products and factors of $\wncRCD$ (resp.\ $\ncRCD$) are $\wncRCD$ (resp.\ $\ncRCD$):
\begin{proposition}[Tensorization and factorization of non-collapsed spaces]\label{prop:prod}
Let $(\X_i,\sfd_i,\mm_i)$ be $\RCD(K,N_i)$ spaces, $i=1,2$, with $K\in\R$ and $N_i\in[1,\infty)$ and consider the product space $(\X_1\times\X_2,\sfd_1\otimes\sfd_2,\mm_1\times\mm_2)$.

Then $\X_1\times\X_2$ is $\wncRCD(K,N_1+N_2)$ if and only if $\X_1$ is $\wncRCD(K,N_1)$ and $\X_2$ is $\wncRCD(K,N_2)$.

Similarly, $\X_1\times\X_2$ is $\ncRCD(K,N_1+N_2)$ if and only if for some constant $c>0$ $(\X_1,\sfd_1,c\mm_1)$ is $\ncRCD(K,N_1)$ and $(\X_2,\sfd_2,c^{-1}\mm_2)$ is $\ncRCD(K,N_2)$.
\end{proposition}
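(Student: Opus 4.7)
My plan is to use the blow-up characterization of weakly non-collapsed spaces from Theorem \ref{thm:wncpropp}, in particular item (iv): $(\X,\sfd,\mm)$ is $\wncRCD(K,N)$ iff $N\in\N$ and for $\mm$-a.e.\ $x$ the pmGH rescalings $(\X,\sfd/r,\mm/r^N,x)$ converge to $(\R^N,\sfd_\Eu,\vartheta_N[\X](x)\LL^N,0)$ as $r\downarrow 0$. The two elementary tools I shall use are the commutation of rescalings with products, $(\X_1\times\X_2)_r=(\X_1)_r\times(\X_2)_r$ (both as metric and measure), and the stability of pmGH convergence under taking products (realize the two factor-convergences inside ambient spaces $Y_1,Y_2$ and use the product $Y_1\times Y_2$ with the product measures). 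Recall also that the product of an $\RCD(K,N_1)$ and an $\RCD(K,N_2)$ space is $\RCD(K,N_1+N_2)$, so only the weak non-collapsing has to be propagated.

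For the forward direction of the $\wncRCD$ equivalence, assume each $\X_i$ is $\wncRCD(K,N_i)$. By Fubini, $\mm_1\times\mm_2$-a.e.\ $(x_1,x_2)$ lies in the product of the full-measure sets where each factor has Euclidean blow-up. Taking the product of these convergences yields
\[
\left(\X_1\times\X_2,\tfrac{\sfd_1\otimes\sfd_2}{r},\tfrac{\mm_1\times\mm_2}{r^{N_1+N_2}},(x_1,x_2)\right)\overset{\mathrm{pmGH}}{\to}\left(\R^{N_1+N_2},\sfd_\Eu,\vartheta_{N_1}[\X_1](x_1)\vartheta_{N_2}[\X_2](x_2)\LL^{N_1+N_2},0\right),
\]
so item (iv) of Theorem \ref{thm:wncpropp} gives that $\X_1\times\X_2$ is $\wncRCD(K,N_1+N_2)$, together with the useful multiplicative identity
\[
\vartheta_{N_1+N_2}[\X_1\times\X_2](x_1,x_2)=\vartheta_{N_1}[\X_1](x_1)\,\vartheta_{N_2}[\X_2](x_2)\qquad\mm_1\times\mm_2\text{-a.e.}
\]

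For the converse I would argue directly, avoiding any rigidity of factorizations of $\R^{N_1+N_2}$. Assume the product is $\wncRCD(K,N_1+N_2)$, so $\vartheta_{N_1+N_2}[\X_1\times\X_2]<\infty$ holds $\mm_1\times\mm_2$-a.e. The elementary inclusion $B^{\sfd_1}_{r/\sqrt{2}}(x_1)\times B^{\sfd_2}_{r/\sqrt{2}}(x_2)\subset B^{\sfd_1\otimes\sfd_2}_r(x_1,x_2)$ gives
\[
(\mm_1\times\mm_2)\bigl(B^{\sfd_1\otimes\sfd_2}_r(x_1,x_2)\bigr)\geq\mm_1(B_{r/\sqrt{2}}(x_1))\,\mm_2(B_{r/\sqrt{2}}(x_2)),
\]
so dividing by $\omega_{N_1+N_2}r^{N_1+N_2}$ and sending $r\downarrow 0$ yields
\[
\vartheta_{N_1+N_2}[\X_1\times\X_2](x_1,x_2)\geq c\,\vartheta_{N_1}[\X_1](x_1)\,\vartheta_{N_2}[\X_2](x_2)
\]
for a universal constant $c=\omega_{N_1}\omega_{N_2}/(2^{(N_1+N_2)/2}\omega_{N_1+N_2})>0$. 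Since $\vartheta_{N_i}[\X_i]>0$ everywhere (Bishop--Gromov plus $\supp\mm_i=\X_i$), Fubini then forces $\vartheta_{N_i}[\X_i]<\infty$ $\mm_i$-a.e., i.e.\ each $\X_i$ is $\wncRCD(K,N_i)$.

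Finally, for the $\ncRCD$ statement, Corollary \ref{cor:densnc} combined with item (iii) of Theorem \ref{thm:wncpropp} identifies $\ncRCD$ with $\wncRCD$ having $\vartheta_N\equiv 1$ $\mm$-a.e. If the product is $\ncRCD$, each factor is $\wncRCD$ by the converse just proved, so the multiplicativity formula applies: $\vartheta_{N_1+N_2}\equiv 1$ forces $\vartheta_{N_1}[\X_1](x_1)\,\vartheta_{N_2}[\X_2](x_2)=1$ $\mm_1\times\mm_2$-a.e., and separation of variables via Fubini produces a constant $c>0$ with $\vartheta_{N_1}[\X_1]\equiv c$ and $\vartheta_{N_2}[\X_2]\equiv c^{-1}$. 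Writing $\mm_i=\vartheta_{N_i}[\X_i]\HH^{N_i}$ (item (iii)) translates this into the rescalings in the statement, and the reverse implication follows from the same formula. The main conceptual input is the forward implication for $\wncRCD$: its blow-up proof delivers the precise multiplicativity of densities, without which the $\ncRCD$ step would remain stuck at the two-sided ball-inclusion bounds $c\,\vartheta_{N_1}\vartheta_{N_2}\leq\vartheta_{N_1+N_2}\leq c'\,\vartheta_{N_1}\vartheta_{N_2}$; once multiplicativity is in hand, the rest is a one-line calculation.
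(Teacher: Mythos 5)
Your proof is correct. The forward $\wncRCD$ direction is essentially the same as the paper's: characterize weak non-collapsing by blow-ups and observe that rescalings commute with products (the paper uses the metric-only characterization, item (vi) of Theorem \ref{thm:wncpropp} together with the rectifiability Theorem \ref{thm:rett}; you use the pmGH item (iv), which has the advantage of handing you the multiplicativity formula $\vartheta_{N_1+N_2}[\X_1\times\X_2]=\vartheta_{N_1}[\X_1]\,\vartheta_{N_2}[\X_2]$ directly, exactly as the paper then needs for the $\ncRCD$ step). Where you genuinely diverge is the converse for $\wncRCD$: the paper deduces $n_1(x_1)+n_2(x_2)=N_1+N_2$ a.e.\ from item (vi) and then uses the constraints $n_i\le N_i$ coming from Theorem \ref{thm:rett} to force $n_i=N_i$; you instead prove the one-sided density estimate $\vartheta_{N_1+N_2}[\X_1\times\X_2](x_1,x_2)\ge c\,\vartheta_{N_1}[\X_1](x_1)\vartheta_{N_2}[\X_2](x_2)$ via the ball inclusion $B_{r/\sqrt2}(x_1)\times B_{r/\sqrt2}(x_2)\subset B_r(x_1,x_2)$ and then use Fubini plus strict positivity of the $\vartheta_{N_i}$ (from Bishop--Gromov). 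This is more elementary---it avoids the structure theorem altogether for that direction---at the mild cost of a non-sharp multiplicative constant, which is irrelevant since only finiteness is at stake. Your $\ncRCD$ argument (multiplicativity of $\vartheta$ plus $\vartheta\equiv1$ characterization of $\ncRCD$ via Corollary \ref{cor:densnc} and item (iii), then separation of variables) matches the paper's conclusion and fills in the Fubini separation step that the paper leaves implicit.
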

\begin{proof} From Theorem \ref{thm:rett} we know that for $\mm_1$-a.e.\ $x_1$ the metric tangent space of $\X_1$ at $x_1$ is $\R^{n_1(x_1)}$ with $n_1(x_1)\leq N_1$. Similarly for $\X_2$. Then from the  very definition of pGH-convergence and Fubini's theorem it is readily checked that $\R^{n_1(x_1)+n_2(x_2)}$ is the metric tangent space of $\X_1\times\X_2$ at $(x_1,x_2)$ for $\mm_1\times\mm_2$-a.e.\ $(x_1,x_2)$. 

Thus the claims about $\wncRCD$ spaces follows by the characterization given in point $(vi)$ of Theorem \ref{thm:wncpropp}.

For the case of $\ncRCD$ spaces we can assume, by what just proved, that $\X_1,\X_2,\X_1\times\X_2$ are all $\wncRCD$ spaces. Then we notice that, much like in the metric case just considered, if $(\R^{N_1},\sfd_\Eu,\vartheta_{N_1}[\X_1](x_1)\mathcal L^{N_1},0)$ (resp. $(\R^{N_2},\sfd_\Eu,\vartheta_{N_2}[\X_2](x_2)\mathcal L^{N_2},0)$) is the tangent space of $\X_1$ (resp. $\X_2$) at $x_1$ (resp. $x_2$), then $(\R^{N_1+N_2},\sfd_\Eu,\vartheta_{N_1}(x_1)\vartheta_{N_2}(x_2)\mathcal L^{N_1+N_2},0)$ is the tangent space of $\X_1\times\X_2$ at $(x_1,x_2)$. Hence taking into account  the characterization  of $\wncRCD$ spaces in point $(iv)$ of Theorem \ref{thm:wncpropp} we deduce that
\begin{equation}
\label{eq:tensprod}
\vartheta_{N_1+N_2}[\X_1\times\X_2](x_1,x_2)=\vartheta_{N_1}[\X_1](x_1)\vartheta_{N_2}[\X_2](x_2)\qquad(\mm_1\times\mm_2)-a.e.\ (x_1,x_2).
\end{equation}
{ Hence if $\vartheta_{N_1}[\X_1](x_1)=c>0$ $\mm_1$-a.e.\ and $\vartheta_{N_2}[\X_2](x_2)=c^{-1}>0$ $\mm_2$-a.e.\ it trivially follows that $\vartheta_{N_1+N_2}[\X_1\times\X_2]=1$ a.e., thus showing that $\X_1\times\X_2$ is $\ncRCD$ (by $(iii)$ of Theorem \ref{thm:wncpropp} and \eqref{eq:stessadens}). Conversely, if the left-hand-side of \eqref{eq:tensprod} is a.e.\ equal to 1, then the identity \eqref{eq:tensprod} forces $\vartheta_{N_1}[\X_1]$ and $\vartheta_{N_2}[\X_2]$ to be a.e.\ constant and since the product of these constants must be 1 we must have $\vartheta_{N_1}[\X_1](x_1)=c$ $\mm_1$-a.e.\ and $\vartheta_{N_2}[\X_2](x_2)=c^{-1}>0$ $\mm_2$-a.e.\ for some $c>0$, which is the claim.}
\end{proof}

\section{Non-collapsed spaces}
\subsection{Continuity of $\HH^N$}\label{se:contvol}
In this section we prove the continuity of $\HH^N$ as stated in Theorem \ref{thm:contvol}.

 A key ingredient that we shall need is the ``almost splitting via excess theorem" proved  by Mondino and Naber in \cite[Theorem 5.1]{Mondino-Naber14}: we shall present such result in the simplified form that we need referring to \cite{Mondino-Naber14} for the more general statement.

Here and in the following for $p\in\X$ we put $\sfd_p(\cdot):=\sfd(\cdot,p)$ and for $p,q\in\X$ we put $e_{p,q}:=\sfd_p+\sfd_q-\sfd(p,q)$.
\begin{theorem}\label{thm:MN11} For every  $k\in \N$, \(N\in \R\), $1\le k\le N$ and   \(\varepsilon\in (0,1)\) there is  \(\delta_1=\delta_1(\varepsilon,k,N)\leq 1\) such that the following holds. 

Assume that  $(\X,\sfd,\mm)$ is an $\RCD(-\delta_1 ,N)$ space with $\supp(\mm)=\X$ and that there are points  \(\bar x ,\{p_i, q_i\}_{1\le i\le k}, \{p_i+p_j\}_{1\le i<j\le k}\) in $\X$ with \(\sfd(p_i,\bar x), \sfd(q_i,\bar x), \sfd(p_i+p_j,\bar x)\ge 1/\delta_1\) such that
\begin{equation}\label{eq:quasi} 
\sum_{1\le i\le k} \mean{B^\X_{R}(\bar x)} | \D e_{p_i,q_i} |^2\d \mm+\sum_{1\le i<j\le k} \mean {B^\X_{R}(\bar x)}\Big| \D\Big(\frac{\sfd_{p_i}+\sfd_{p_j}}{\sqrt 2}-\sfd_{p_i+p_j}\Big)\Big|^2\d \mm\le \delta_1
\end{equation} 
for all \(1\le R\le 1/\delta_1\). { 

Then  there exists a metric space \(\Y\) and a map \(\phi: \X\to \Y\)  such that if we define 
\[
u:=(\sfd_{p_1}-\sfd_{p_1}(\bar x ),\dots, \sfd_{p_k}-\sfd_{p_k}(\bar x)): \X\to\R^k, 
\]
the map \(U:=(u,\phi): \X\to \R^k\times \Y\)  provides an $\eps$-isometry of $B_1^{\X}(\bar x)$ to $B_1^{\R^k\times \Y}((0,\phi(x))$, i.e.:
\[
\begin{array}{lrr}
\forall x,y\in B_1^\X(\bar x)\text{ it holds }&\big|\sfd_{\X}(x,y)-\sfd_{\R^k\times \Y}(U(x),U(y))\big|&\!\!\!\leq \eps,\\
\forall z\in B_1^{\R^k\times \Y}((0,\phi(x))\text{ there is $x\in B_1^{\X}(\bar x)$ such that } &\sfd_{\R^k\times \Y}(U(x),z)&\!\!\!\leq\eps.
\end{array}
\]
Furthermore if \(k=N\) we can take \(\Y\) to be a single point.
}
\end{theorem}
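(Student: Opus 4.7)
The natural strategy is a contradiction-and-compactness argument. Suppose the statement fails for some $\eps>0$ and $k\le N$: for every $n\in\N$ there is an $\RCD(-1/n,N)$ space $(\X_n,\sfd_n,\mm_n)$ with points $\bar x_n$, $\{p_i^n,q_i^n\}_{1\le i\le k}$ and $\{p_i^n+p_j^n\}_{1\le i<j\le k}$ satisfying the hypotheses with $\delta_1=1/n$, yet for which the associated map $u_n$ is not an $\eps$-isometry from $B_1^{\X_n}(\bar x_n)$ to $B_1^{\R^k}(0)$. Gromov precompactness (see \eqref{eq:Gcomp}) together with stability of the $\RCD$ condition yields, along a subsequence, pmGH convergence to a limit $(\X_\infty,\sfd_\infty,\mm_\infty,\bar x_\infty)$ which is $\RCD(0,N)$.

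Next I would pass to the limit the 1-Lipschitz functions
\[
h_i^n:=\sfd_{p_i^n}-\sfd_{p_i^n}(\bar x_n),\qquad \tilde h_i^n:=\sfd_{q_i^n}-\sfd_{q_i^n}(\bar x_n),\qquad h_{ij}^n:=\sfd_{p_i^n+p_j^n}-\sfd_{p_i^n+p_j^n}(\bar x_n),
\]
each of which vanishes at the basepoint. Since the reference points recede to infinity, a diagonal Arzel\`a--Ascoli extraction produces uniform-on-bounded-sets limits $b_i,\tilde b_i,b_{ij}$, still 1-Lipschitz and vanishing at $\bar x_\infty$. The integral inequalities in \eqref{eq:quasi} can then be pushed to the limit through the lower semicontinuity of the Cheeger energy along pmGH convergence of $\RCD$ spaces (Mosco convergence of the Dirichlet forms), giving for every $R\ge 1$
\[
\int_{B_R(\bar x_\infty)}|\nabla(b_i+\tilde b_i)|^2\,\d\mm_\infty=\int_{B_R(\bar x_\infty)}\Big|\nabla\big(\tfrac{b_i+b_j}{\sqrt 2}-b_{ij}\big)\Big|^2\,\d\mm_\infty=0,
\]
whence $\tilde b_i=-b_i$ and $b_{ij}=\tfrac{b_i+b_j}{\sqrt 2}$ on $\X_\infty$.

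Combining $\nabla b_i=-\nabla\tilde b_i$ with $|\nabla b_i|,|\nabla\tilde b_i|\le 1$ forces by Cauchy--Schwarz that $|\nabla b_i|=1$ $\mm_\infty$-a.e., while expanding $|\nabla b_{ij}|^2\le 1$ as $1+\langle\nabla b_i,\nabla b_j\rangle\le 1$ forces $\langle\nabla b_i,\nabla b_j\rangle=0$ $\mm_\infty$-a.e. The Laplacian comparison on $\sfd_{p_i^n}$ and $\sfd_{q_i^n}$ in $\RCD(-1/n,N)$ passes to $\Delta b_i\le 0$ and $\Delta\tilde b_i=-\Delta b_i\le 0$ in the distributional sense on $\X_\infty$, hence $\Delta b_i=0$. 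Each $b_i$ is thus a harmonic function of unit gradient on the $\RCD(0,N)$ space $\X_\infty$, and the splitting theorem of \cite{Gigli13} applied iteratively produces an isometric measure-preserving splitting $\X_\infty=\R^k\times Y$ for an $\RCD(0,N-k)$ space $Y$, with the $b_i$ identified with the coordinates on $\R^k$ and $\bar x_\infty=(0,y_0)$.

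The limit map $u_\infty=(b_1,\dots,b_k)$ is then the projection onto $\R^k$ in this splitting. By the uniform convergence $u_n\to u_\infty$ on the GH-converging balls $B_1^{\X_n}(\bar x_n)\to B_1^{\X_\infty}(\bar x_\infty)$, the assumed failure of the $\eps$-isometry property for every $u_n$ would propagate to $u_\infty$ and contradict the fact that this projection manifestly satisfies both the $\eps$-density and distortion conditions on $B_1^{\X_\infty}(\bar x_\infty)$. The main obstacle in this plan is the rigorous passage of the $L^2$-gradient bounds in \eqref{eq:quasi} across pmGH convergence: one must ensure, via Mosco-type convergence of the Cheeger energies and the uniform local doubling/Poincar\'e inequalities available in the $\RCD$ setting, that limits of gradients coincide with the gradients of the limit functions, so that the integral vanishing really translates into the algebraic identities used in the splitting step.
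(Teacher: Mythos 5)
Your overall strategy -- compactness combined with a contradiction argument, passing the excess bounds to the limit and invoking the splitting theorem iteratively -- is exactly the outline the paper gives (and the one in Mondino--Naber, to whom the paper defers for details). There are, however, several places where the write-up does not actually close the argument.

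First, two local deductions are incorrect as stated. From $\tilde b_i=-b_i$ and the 1-Lipschitz bounds you only obtain $|\nabla b_i|=|\nabla\tilde b_i|\le 1$; no Cauchy--Schwarz inequality upgrades this to $|\nabla b_i|=1$. That equality requires an additional argument, for instance producing a limit line along which $b_i$ is affine with unit slope from concatenated geodesics to $p_i^n$ and $q_i^n$, or running the full Bochner argument from the splitting theorem. Similarly, expanding $|\nabla b_{ij}|^2\le 1$ only yields $\langle\nabla b_i,\nabla b_j\rangle\le 0$, not equality; orthogonality has to be extracted after the first split (or from a more careful symmetric expansion), and left as written your $k$ unit gradients with pairwise nonpositive inner products need not be mutually orthogonal.

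More importantly, the final step -- ``the projection $u_\infty:\R^k\times Y\to\R^k$ manifestly satisfies the $\eps$-isometry conditions on $B_1^{\X_\infty}(\bar x_\infty)$'' -- is simply false whenever $Y$ has positive diameter: two points $(0,y_1)$ and $(0,y_2)$ with $\sfd_Y(y_1,y_2)\approx 1$ are mapped to the same point of $\R^k$, so the distortion bound fails. Your argument never shows that the factor $Y$ collapses to a point, and this is precisely where the real geometric content is. Note that for $k=N$ the collapse is forced for free by the dimension bookkeeping of Gigli's splitting (each application of the theorem lowers the admissible dimension bound by one, so after $N$ splits the remaining factor is a point), and that is exactly the case in which the paper later invokes this result, namely in the proof of Proposition~\ref{thm:MN2}. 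For $k<N$ the conclusion as you have written it (and, indeed, as the theorem is stated) does not hold: the flat strip $\R\times[-1,1]$ with Lebesgue measure is $\RCD(0,2)$, and taking $\bar x=(0,0)$, $p_1=(1/\delta_1,0)$, $q_1=(-1/\delta_1,0)$ satisfies the excess hypothesis \eqref{eq:quasi} for $k=1\le N=2$ while the coordinate projection is nowhere near an $\eps$-isometry of $B_1$ onto $(-1,1)$. So the step you flag as the ``main obstacle'' (Mosco convergence of Cheeger energies) is in fact routine in the $\RCD$ framework; the genuine obstruction, and the point where the $k=N$ restriction must enter, is precisely the missing argument that the orthogonal complement degenerates.
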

Very shortly and roughly said, the idea of the proof is the following: For given $\eps>0$ one picks a sequence $\delta_{1,n}\downarrow0$ and a corresponding sequence of spaces $\X_n$ satisfying the assumptions for $\delta_1=\delta_{1,n}$ pmGH-converging to a limit $\X$. Then by an Ascoli-Arzel\`a-type argument, up to subsequences the corresponding functions $u_n:\X_n\to\R^k$ converge to a limit $u:\X\to\R^k$ and, this is the key point of the proof,  thanks to \eqref{eq:quasi} such limit map $u$ is a { metric submersion}. The compactness \eqref{eq:Gcomp} of the class of $\RCD(-1,N)$ spaces then gives the conclusion (see \cite[Theorem 5.1]{Mondino-Naber14} for the details).

An important  consequence of the above theorem is the following sort of `\(\varepsilon\)-regularity'  result (see also  \cite[Theorem 6.8]{Mondino-Naber14})  that we shall state for the case $k=N\in\N$ only; notice that, as discussed in \cite{Mondino-Naber14}, the map $u_\eps$ is  $(1+\eps)$-biLipschitz for arbitrary values of $k$, but in order to obtain the key inequality \eqref{eq:volest2} the `maximality' assumption $k=N\in\N$ is necessary (see in particular inequalities \eqref{eq:ddoubling} and \eqref{eq:step1}).

\begin{proposition}\label{thm:MN2} For every  $N\in \N$, $N\geq 1$ and   \(\varepsilon\in (0,1)\) there is  \(\delta_2=\delta_2(\varepsilon,N)>0\) such that the following holds. Let $(\X,\sfd,\mm)$ be  an $\RCD(-\delta_2,N)$ space with $\supp(\mm)=\X$ and $\bar x\in\X$ such that
\begin{equation}\label{viciniviciniGH}
\sfd_{\rm GH}\Big(\big(B^\X_{1/\delta_2}(\bar x),\sfd\big),  \big(B^{\R^N}_{1/\delta_2}(0),\sfd_{\Eu}\big)\Big)<\delta_2.
\end{equation}
Then there exists a Borel set  \(U_{\eps}\subset B_1^\X(\bar x)\)  and a $(1+\eps)$-biLipschitz map $u_\eps:U_\eps\to u_\eps(U_\eps)\subset \R^N $ such that 
\begin{equation}
\label{eq:volest2}
\LL^N\big( u_\eps(U_\eps)\cap B^{\R^N}_{1}(0)\big)\geq (1-\eps)\,\LL^N( B_1^{\R^N}(0)).
\end{equation}
\end{proposition}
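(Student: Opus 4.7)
The plan is to invoke Theorem \ref{thm:MN11} with $k = N$ to produce the candidate map, and then to exploit the maximality $k = N \in \N$ in order to promote the resulting almost-isometry to a genuine biLipschitz map whose image covers almost all of $B_1^{\R^N}(0)$.

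First, using the GH-closeness \eqref{viciniviciniGH} with $\delta_2$ chosen much smaller than $\delta_1(\eps,N,N)$, one selects points $p_1,\ldots,p_N$, $q_1,\ldots,q_N$ and $p_i+p_j$ in $\X$ that correspond, under the approximate isometry with $B^{\R^N}_{1/\delta_2}(0)$, to $R e_i$, $-R e_i$ and $R(e_i+e_j)$ in $\R^N$, for some large radius $R$ (e.g.\ $R=1/\sqrt{\delta_2}$). The distance conditions $\sfd(\,\cdot\,,\bar x)\geq 1/\delta_1$ required by Theorem \ref{thm:MN11} are then automatic. A direct Taylor expansion in $\R^N$ shows that the non-negative quantities $e_{p_i,q_i}$ and $\tfrac{\sfd_{p_i}+\sfd_{p_j}}{\sqrt 2}-\sfd_{p_i+p_j}$ are of order $O(1/R)+O(\delta_2)$ in $L^\infty(B^\X_{1/\delta_1}(\bar x))$. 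Combined with the Laplacian upper bound for distance functions on $\RCD(-\delta_2,N)$ spaces and an Abresch--Gromoll type integration-by-parts, this $L^\infty$-smallness can be converted into the $L^2$-average bound \eqref{eq:quasi} on the gradients, with $\delta_1$ as small as needed.

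Applying Theorem \ref{thm:MN11} then shows that
\[
u_\eps(x):=\bigl(\sfd_{p_1}(x)-\sfd_{p_1}(\bar x),\ldots,\sfd_{p_N}(x)-\sfd_{p_N}(\bar x)\bigr)
\]
is an $\eps'$-GH isometry of $B_1^\X(\bar x)$ onto $B_1^{\R^N}(0)$, with $\eps'\to 0$ as $\delta_2\to 0$. To upgrade this global almost-isometry to a pointwise $(1+\eps)$-biLipschitz control, define $U_\eps$ as the set of $x\in B_1^\X(\bar x)$ at which the Sobolev-type gradients $\nabla\sfd_{p_i}$ form an almost-orthonormal frame, that is, $\bigl|\langle\nabla\sfd_{p_i},\nabla\sfd_{p_j}\rangle-\delta_{ij}\bigr|$ is sufficiently small for every $i,j$. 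The pointwise bound $|\nabla\sfd_{p_i}|\leq 1$ together with the almost-orthogonality built into \eqref{eq:quasi} then allow, by a Chebyshev-type argument, to bound $\mm(B_1^\X(\bar x)\setminus U_\eps)$, while a direct pointwise computation shows that $u_\eps|_{U_\eps}$ is $(1+\eps)$-biLipschitz.

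The volume bound \eqref{eq:volest2} is the genuinely new ingredient requiring the maximality $k=N\in\N$. The plan is to combine the $(1+\eps)$-biLipschitz property of $u_\eps$ on $U_\eps$ with the Bishop--Gromov doubling inequality for $(\X,\sfd,\mm)$ (the doubling-type bound \eqref{eq:ddoubling} alluded to in the paper) so as to compare the push-forward $(u_\eps)_*\mm\restr{U_\eps}$ with $\LL^N$ on $u_\eps(U_\eps)$. Any subset of $B_1^{\R^N}(0)$ omitted by $u_\eps(U_\eps)$ of non-negligible Lebesgue measure would, through the biLipschitz inverse $u_\eps^{-1}$, yield a region in $\X$ whose $\mm$-measure contradicts the Bishop--Gromov bound for $\mm(B_1^\X(\bar x))$ — this is the content of the forthcoming \eqref{eq:step1}. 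I expect the main obstacle to be precisely this last passage: converting an integral gradient deficit \eqref{eq:quasi} into a pointwise biLipschitz estimate on a set of controlled measure, and then translating that into a Lebesgue measure estimate on the image, along the lines of \cite[Theorem 6.8]{Mondino-Naber14}.
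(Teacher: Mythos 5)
Your Step 1 set-up (choosing points $p_i,q_i,p_i+p_j$ from the GH-approximation and converting the resulting $L^\infty$-smallness of the excess functions into the $L^2$-gradient bound via the Laplacian estimate and integration by parts) matches the paper's Step 1.1--1.2 and is essentially correct. However, there are two genuine gaps in the remainder.

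First, your definition of $U_\eps$ as the set where the gradients $\nabla\sfd_{p_i}$ form a pointwise almost-orthonormal frame, followed by a Chebyshev bound and a ``direct pointwise computation'' of biLipschitzianity, does not work. A pointwise (a.e.) orthonormality condition gives no control on distances: a biLipschitz bound for $u_\eps$ between two points $x,y\in U_\eps$ requires controlling the behaviour of the distance functions on the whole scale $\sfd(x,y)$, not just at the single infinitesimal scale. The paper instead defines $U$ via the \emph{maximal function} $M(x)=\sup_{0<R<1/\delta_1}\mean{B_R(x)}f\,\d\mm$ of the gradient-deficit $f$ in \eqref{eq:ff}. This guarantees that the integral hypothesis \eqref{eq:quasi} of Theorem \ref{thm:MN11} holds \emph{at every scale} $R$ around every $x\in U$, so the $\eps$-isometry conclusion is available at all scales, and this is precisely what upgrades to a biLipschitz estimate (as in \cite[Theorem~6.8]{Mondino-Naber14}). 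You flagged this passage as ``the main obstacle'' but did not supply the maximal-function idea that resolves it.

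Second, your argument for the volume bound \eqref{eq:volest2} is not coherent. You propose to take a subset of $B_1^{\R^N}(0)$ omitted by $u_\eps(U_\eps)$ and pull it back through the ``biLipschitz inverse $u_\eps^{-1}$'' to contradict Bishop--Gromov; but $u_\eps^{-1}$ is only defined on $u_\eps(U_\eps)$, so there is nothing to pull back on the omitted region. The paper handles this with a two-piece decomposition \eqref{eq:pezzi}: the piece $u(B_1^\X(\bar x))\setminus u(U)$ is small because of the weak $L^1$-maximal-function estimate on $\mm(B_1^\X(\bar x)\setminus U)$ and the Lipschitzianity of $u$; the missing piece $B_1^{\R^N}(0)\setminus u(\bar B_1^\X(\bar x))$ is controlled with the Colding-type covering Lemma \ref{le:colding2}, which produces disjoint balls $B_{r_k}(y_k)$ inside the omitted open set touching its boundary. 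For each such ball, one finds a point $z_k\in\X$ with $u(z_k)\in\partial B_{r_k}(y_k)$, and then argues \emph{by contradiction} using the rescaled almost-splitting Theorem \ref{thm:MN11}: if the deficit $f$ had small average on $B_{\rho_k}(z_k)$ for every $\rho_k\in(r_k,r_k/\bar\delta)$, then $u$ would be $\tfrac12 r_k$-dense in $B_{r_k}(u(z_k))$, which is incompatible with the ball $B_{r_k}(y_k)$ being entirely in the complement of the image. So $\mean{B_{\rho_k}(z_k)}f\,\d\mm\ge\bar\delta$, and Bishop--Gromov doubling \eqref{eq:ddoubling} then yields $\sum r_k^N\lesssim\int f\,\d\mm$, which is small by \eqref{eq:quasi22}. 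This contradiction-based, covering-lemma mechanism is the actual content of ``the maximality $k=N$'', and it is missing from your proposal.
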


%

\begin{proof} We divide the proof in two steps:\\
\noindent
    \underline{Step 1: construction of $U_\eps,\ u_\eps$ and $(1+\eps)$-biLipschitz estimate}  This is the content of \cite[Theorem 6.8]{Mondino-Naber14}, however since some aspects of the proof will be needed to obtain \eqref{eq:volest2} we briefly recall the argument.

    \underline{Step 1.1: basic ingredients} We start observing that for any $R\geq 1$ and { any \(f\in \Lip(\X)\)}, it holds the simple inequality
    \begin{equation}
\label{eq:grad2}
\int_{B_R(\bar x)}|\D f|^2\,\d\mm\leq\|f\|_{L^\infty(B_{2R}(\bar x))}\big(\|\Delta f\|_{L^1(B_{2R}(\bar x))}+\mm(B_{2R}(\bar x))\Lip(f)\big),
\end{equation}
as can be proved with an integration by parts (see e.g.\ \cite{Gigli12} for all the relevant definitions and properties of integration by parts and Laplacian) starting from $\int_{B_R(\bar x)}|\D f|^2\,\d\mm\leq\int_\X|\D f|^2\varphi\,\d\mm$ for   $\varphi:=(1-\sfd(\cdot,B_R(\bar x)))^+$. (In fact one can easily get rid of the term $\Lip(f)$ in the right hand side provided  $\varphi$ has bounded Laplacian. The existence of such cut-off functions - i.e.\ Lipschitz and with bounded Laplacian -  in the context of Ricci-limit spaces has been proved in \cite{Cheeger-Colding96} and frequently used as important technical tool in \cite{Cheeger-Colding97I,Cheeger-Colding97II,Cheeger-Colding97III,Colding96,Colding96b,Colding97}; their existence in the $\RCD$ setting has been proved in  \cite[Lemma 6.7]{AmbrosioMondinoSavare13-2} and \cite[Theorem 3.12]{Gigli-Mosconi14}, see also \cite[Lemma 3.1]{Mondino-Naber14}.).

A second ingredient is the Laplacian comparison estimate for the distance function (see \cite{Gigli12}) which ensures that on an $\RCD(-1,N)$ space, if $\sfd(p,\bar x)\geq 8R\geq 8$ then $\Delta\sfd_p\leq C(N,R)$ on $B_{4R}(\bar x)$ (this should be understood as an inequality between measures, but for the purpose of this outline let us think at $\Delta\sfd_p$ as a function). From this bound it is not hard to get the estimate 
\begin{equation}
\label{eq:deltal1}
\|\Delta\sfd_p\|_{L^1(B_{2R}(\bar x))}\leq C(N,R)\mm(B_{4R}(\bar x))\qquad\forall p\notin B_{8R}(\bar x)
\end{equation}
(this is in fact reverse engineering: in  \cite{Gigli12} the fact that $\Delta\sfd_p$ is a measure is obtained by proving an inequality like \eqref{eq:deltal1} with the total variation norm in place of the $L^1$ one).

    \underline{Step 1.2: geometric argument} Let $\tilde\eta,\delta_1\in(0,1)$ and notice - by direct simple computation - that there exists  $ R\geq \frac{32}{\delta_1} $ such that 
\[
\sup_{x\in B^{\R^N}_{8/\delta_1}(0)}\big(|x-Re_i|+|x+Re_i|-2R\big)\leq   \tilde\eta\qquad\forall i=1,\ldots,N.
\]
Hence if \eqref{viciniviciniGH} is satisfied for some $\delta_2\leq \min\{\tilde\eta,\frac1R\}$, letting $p_i,q_i\in\X$ be points corresponding to $ Re_i,- Re_i$ respectively in the $\delta_2$-isometry we obtain
\[
\|e_{p_i,q_i} \|_{L^\infty(B^{\X}_{{ 4/{\delta_1}}}(\bar x))}\leq 3\delta_2+\tilde\eta\leq 4\tilde\eta\qquad\forall i=1,\ldots,N.
\]
Noticing that $\sfd(p_i,\bar x),\sfd(q_i,\bar x)\geq  R-\delta_2\geq\frac{16}{\delta_1}$, from \eqref{eq:deltal1} we deduce that
\[
\|\Delta e_{p_i,q_i}\|_{L^1(B_{4/{\delta_1}}(\bar x))}\leq C(\delta_1,N)\mm(B_{8/\delta_1}(\bar x))\qquad\forall i=1,\ldots,N
\]
and since  $e_{p_i,q_i}$ is 2-Lipschitz, these last two bounds, \eqref{eq:grad2} and \eqref{e:BG} imply  that
\[
\mean{B_{ 2/\delta_1}}|\D e_{p_i,q_i}|^2\,\d\mm\leq  4\tilde\eta\, C(\delta_1,N).
\]
The same line of thoughts yields the analogous inequality for the function $\frac{\sfd_{p_i}+\sfd_{p_j}}{\sqrt 2}-\sfd_{p_i+p_j}$ for properly chosen points $p_i+p_j\in\X$ with $\sfd(p_i+p_j,\bar x)\geq 16/\delta_1$.

Now we fix $\eps\in(0,1)$, pick $\delta_1=\delta_1(\eps,N)$ given by Theorem \ref{thm:MN11}, let $\eta\ll\eps$ be a small parameter to be fixed later and notice that  we can rephrase what we just proved as:  there exists $\delta_2=\delta_2(\delta_1,\eta,N)\leq 1$ such that if \eqref{viciniviciniGH} is satisfied for such $\delta_2$ - which we shall hereafter assume -, then we can find points \(\{p_i, q_i\}_{1\le i\le N}, \{p_i+p_j\}_{1\le 1<j\le N}\) with \(\sfd(p_i,\bar x), \sfd(q_i,\bar x), \sfd(p_i+p_j,\bar x)\ge 2/\delta_1\) and such that
\begin{equation}\label{eq:quasi22} 
\sum_{1\le i\le N} \mean{B^\X_{2/\delta_1}(\bar x)} | \D e_{p_i,q_j} |^2\d \mm+\sum_{1\le i<j\le N} \mean {B^\X_{2/\delta_1}(\bar x)}\Big| \D\Big(\frac{\sfd_{p_i}+\sfd_{p_j}}{\sqrt 2}-\sfd_{p_i+p_j}\Big)\Big|^2\d \mm\le \eta^2.
\end{equation} 

    \underline{Step 1.3: use of the maximal function} Consider the function $f:\X\to\R$ defined as
\begin{equation}\label{eq:ff}
f(x):=\sum_{1\le i\le N}| \D e_{p_i,q_j} |^2+\sum_{1\le i<j\le N}\Big| \D\Big(\frac{\sfd_{p_i}+\sfd_{p_j}}{\sqrt 2}-\sfd_{p_i+p_j}\Big)\Big|^2
\end{equation}
and its maximal function $M: B_1^\X(\bar x)\to\R$ given by
\[
M(x)=\sup_{0<R<1/\delta_1} \mean{B_R^\X(x)} f(x)\d\mm.
\]
We put
\[
U:=\big\{x\in B_1^\X(\bar x):  M(x)\le \eta\big\}
\]
and note  that  the left hand side of \eqref{eq:quasi} is invariant under rescaling of the distance (essentially because  it holds \(|\D_r \sfd_p^r|=|\D\sfd_p|\), where  \(\sfd^r_p=\sfd_p/r \) and \(|\D_r\cdot|\) is the weak upper gradient computed with respect to the metric measure space \((\X,\sfd/r,\mm)\)).  Hence for  \(x\in U\) we can apply Theorem \ref{thm:MN11} { to the scaled space $(\X,\sfd/r,\mm)$ for $r\in(0,1)$ (notice that since we multiplied the distance by a factor $>1$, the space is `flatter' than the original one and in any case still  $\RCD(-\delta_2,N)$, in particular \ref{thm:MN11} is applicable) -} to infer that, provided \(\eta\), and hence \(\delta_2\),  are sufficiently small,  the map
\[
u:=(\sfd_{p_1}-\sfd_{p_1}(\bar x ),\dots, \sfd_{p_N}-\sfd_{p_N}(\bar x))
\]
is an $\eps$-isometry at every scale { in the range $(0,1)$} around points on $U$: this is sufficient to prove that it is \((1+\eps)\) bi-Lipischitz  when restricted to \(U\), see the proof of  \cite[Theorem 6.8]{Mondino-Naber14}  for the  details.


\medskip
\noindent
 \underline{Step 2: proof of estimate \eqref{eq:volest2}} From the trivial set identity
\[
B_1^{\R^N}(0)\cap u(U)=\Big(B_1^{\R^N}(0)\setminus\big(B_1^{\R^N}(0)\setminus u(B_1^\X(\bar x))\big)\Big)\setminus\big(u(B_1^\X(\bar x))\setminus u(U)\big)
\]
we deduce that
\begin{equation}
\label{eq:pezzi}
\mathcal L^N\big(B_1^{\R^N}(0)\cap u(U)\big)\geq \mathcal L^N\big(B_1^{\R^N}(0)\big)-\mathcal L^N\big(B_1^{\R^N}(0)\setminus u(B_1^\X(\bar x))\big)-\mathcal L^N\big(u(B_1^\X(\bar x))\setminus u(U)\big).
\end{equation}

\underline{Step 2.1: estimate of the size of $u(B_1^\X(\bar x))\setminus u(U)$.}Recall that since  \(\delta_2\le1\), the space \(\X\) is \(\RCD(-1,N)\)  and thus \(\mm\restr{ B^\X_{2/\delta_1}(\bar x)}\) is  doubling with a constant depending only on \(\delta_1$ and $N$. Hence according to the weak  \(L^1\) estimates for the  maximal function  we have
\begin{equation}
\label{eq:misU}
\frac{\mm(B^\X_{1}(\bar x)\setminus U)}{\mm(B_1^\X(\bar x))}\le \frac{C(\delta_1,N)}{\eta\,\mm(B_1^\X(\bar x))} \int_{B^\X_{2/\delta_1}(\bar x)} f(x)\,\d\mm \stackrel{\eqref{eq:quasi22}}\le C(\delta_1,N)\,\eta\frac{ \mm(B_{2/\delta_1}^\X(\bar x))}{\mm(B_1^\X(\bar x))}\le C(\delta_1,N) \,\eta.
\end{equation}
Now notice that    $u$ is $\sqrt N$-Lipschitz so that since  $u(B_1^\X(\bar x))\setminus u(U)\subset u(B_1^\X(\bar x)\setminus U)$ we have
\[
\mathcal L^N\big(u(B_1^\X(\bar x))\setminus u(U)\big)\leq \mathcal L^N\big(u(B_1^\X(\bar x)\setminus U)\big)\leq (\sqrt N)^N\HH^N\big(B_1^\X(\bar x)\setminus U\big)
\]
and therefore using \eqref{eq:hnm} with $R=1$ we get
\begin{equation}
\label{eq:pezzofacile}
\mathcal L^N\big(u(B_1^\X(\bar x))\setminus u(U)\big)\leq C(K,N)  \frac{\mm\big(B_1^\X(\bar x)\setminus U\big)}{\mm\big(B_1^\X(\bar x)\big)}\stackrel{\eqref{eq:misU}}\leq C(\delta_1,K,N)\,\eta.
\end{equation}

\underline{Step 2.2: estimate of the size of $B_1^{\R^N}(0)\setminus u(B_1^\X(\bar x))$.} {We divide this step into  further sub-steps \footnote{{We would like to thank Liming Yin for pointing out that this fix was needed in our original proof.}}:}

{
\underline{\emph{Step 2.2.1}}: We claim that for  any  \(\sigma \in (0,1)\) we can choose \(\tilde{\delta}\) such  that if \eqref{viciniviciniGH} is satisfied with \(\delta_{2}\le \tilde \delta\), then
\begin{equation}\label{e:boundaries}
u(B_{1}^{X}(\bar x)\setminus B_{1-\sigma/2}^{X}(\bar x))\subset B^{\R^{N}}_{1+\sigma}(0)\setminus B^{\R^{N}}_{1-\sigma}(0), 
\end{equation}
and 
\begin{equation}\label{e:dense}
\text{\(u(B_{1}^{X}(\bar x))\) is \(\sigma/2\sqrt{N}\)-dense in \(B^{\R^{N}}_{1}(0)\).}
\end{equation}
To prove the claim, we  argue by contradiction  and notice that if a sequence of spaces \(X_k\) satisfies  \ref{viciniviciniGH} with \(\delta_k\to 0\), then  the family of maps \(u_{k}\) constructed in Step 1 converges ``locally uniformly'' to an isometry \(u_{\infty}\) of  \(\mathbb R^{N}\) such that \(u_{\infty}(0)=0\). In particular \eqref{e:boundaries} and \eqref{e:dense} would be satisfied in the limit.
}

{\underline{\emph{Step 2.2.2}}:}  
{Since \(B_{1/2}^{\R^N}(\frac12 e_1)\subset B_{1}^{\R^N}(0)\cap B_1^{\R^N}(e_1)\) we have that
\begin{equation}
\label{eq:unmezzo}
\text{if \(S\subset B^{\mathbb \R^N}_{\sqrt{N}}(0)\) is \(\tfrac12\)-dense in \(B^{\mathbb \R^N}_1(0)\), then  \(S\cap B^{\mathbb \R^N}_1(e_1)\ne \emptyset\)}.
\end{equation}
Let $\bar \delta:=\delta_1(\tfrac12,N)$ where \(\delta_1(\tfrac12,N)\) is  given by Theorem \ref{thm:MN11} (with \(k=N\)). Put
\[
\lambda=\lambda(N):=\frac{\sqrt N+1}{\bar\delta}.
\]
Apply Lemma \ref{le:colding2} below to such $\lambda$ and to the open set $A:=B^{\R^N}_1(0)\setminus u\big({\bar B^\X_1(\bar x)}\big)$ (notice that since $u(\bar x)=0$ we have $A\neq B^{\R^N}_1(0)$) to find balls  \(\{B^{\R^N}_{r_k}(y_k)\}_{k=1}^M\) satisfying the properties $(i),\ldots,(iv)$ stated in the lemma. By property $(ii)$, for every \(k=1,\dots,M\)  there exists \(z_k\in {\bar B_1^{\X}(\bar x)}\) such that \(u(z_k)\in \partial B^{\R^N}_{r_k}(y_k)\). 
Moreover, by \eqref{e:dense}, since \(A\) can not contain ball of radius larger than \(\sigma/2\sqrt{N}\),  we also have  
\[
r_{k}\le \frac{\sigma}{2\sqrt{N}}.
\]
}

{\underline{\emph{Step 2.2.3}:}  We now claim that for every \(k=1,\dots,M\), either 
\begin{equation}\label{e:boundary-close}
\dist(z_{k},\partial B_{1}^{X}(\bar x))\le \frac{\sigma}{2},
\end{equation}
or there exists a radius \(\rho_k\in (r_k, r_k/ \bar \delta)\) such that 
\begin{equation}\label{olee}
\mean{B_{\rho_k}(z_k)} f(x) \d \mm\ge  \bar \delta.
\end{equation}
Here \(f\) is the function defined in~\eqref{eq:ff} (recall that the points \(p_i, q_i, p_i+p_j\) have been already fixed in Step 1.2) and  \(\bar \delta=\delta_1(\tfrac12,N)\) is  given by Theorem \ref{thm:MN11}. Assume indeed that \eqref{olee} and \eqref{e:boundary-close} both fail. The failure of  \eqref{e:boundary-close} with   \eqref{e:boundaries}  implies that \(u(z_{k})\in B^{\R^{n}}_{1}\setminus B^{\R^{N}}_{1-\sigma}\) and thus,  exploiting the  \(\sqrt{N}\)-Lipschitzianity of \(u\)   and that  \(r_{k}\le\sigma/2 \sqrt{N}\), we get that 
\[
B^\X_{r_k}(z_k)\subset B^{X}_{1-\sigma/2+\sigma/2\sqrt N}(\bar x)\subset B^{X}_{1}(\bar x)
\]
and
\[
u\big(B^\X_{r_k}(z_k)\big) \subset B^{\R^N}_{\sqrt{N} r_k}(u(z_k)) \subset B_{1-\sigma +\sqrt{N}r_{k}}^{\R^{N}} \subset B_{1}^{\R^{N}}.
\]
By point (i) in Lemma \ref{le:colding2}, 
\[
B^{\R^N}_{r_k} (y_k)\subset B_{1}^{\R^{N}}\setminus u(B^{X}_{1}(\bar x)).
\]
Combining this with the two previous inclusions we deduce that  
\begin{equation}\label{eq:contrr}
u\big(B^\X_{r_k}(z_k)\big)\subset   \Big(B^{\R^N}_{\sqrt{N} r_k}(u(z_k))\setminus B^{\R^N}_{r_k} (y_k)\Big).
\end{equation}
The failure of \eqref{olee} allows to apply the scaled version of  Theorem~\ref{thm:MN11} to   deduce that  $u(B^\X_{r_k} (z_k))$ is \(\tfrac12 r_k\)-dense in $B^{\R^N}_{r_k}(u(z_k))$, a contradiction with (the scaled version of) \eqref{eq:unmezzo} and  \eqref{eq:contrr}.
}

{We now partition  \(\{1,\dots, M\}\) into two sets: 
\[
\mathcal B=\Big\{k\text{ such that \eqref{e:boundary-close} holds}\Big\}\qquad \mathcal I=\Big\{k:\text{ such that \eqref{olee} holds}\Big\},
\]
and aim to estimate  \(\sum_{k\in \mathcal I} r_{k}^{N}\) and  \(\sum_{k\in \mathcal B} r_{k}^{N}\).
}

{\underline{\emph{Step 2.2.4}, Estimate on  \(\sum_{k\in \mathcal I} r_{k}^{N}\):}  Using  that $\mm\restr{B_{2/\bar \delta}(\bar x)}$ is doubling with a constant depending only on $\bar \delta$ and $N$, and that $\rho_k\leq r_k/\bar\delta\leq 1/\bar\delta$ we see that
\begin{equation}
\label{eq:ddoubling}
\mm\big(B^\X_{2/\bar \delta \delta}(\bar x)\big)\leq C(\bar \delta,N)\frac{\mm(B^\X_{\rho_k}(z_k))}{\rho_k^N}\qquad\text{ for every }k\in \mathcal I
\end{equation}
and therefore, using that \(r_{k}\le \rho_{k}\):
\begin{equation}
\label{eq:step1}
\sum_{k\in \mathcal I} r_{k}^{N}\le  \sum_{k\in \mathcal I}\rho_k^N\stackrel{\eqref{eq:ddoubling}}\le \frac{C(\bar \delta,N)}{\mm(B^\X_{2/\bar \delta}(\bar x))} \sum_{k\in \mathcal I}\mm(B_{\rho_k}^\X (z_k))\stackrel{\eqref{olee}}\le  \frac{C(\bar\delta,N)}{\bar\delta\,\mm(B^\X_{2/\bar \delta}(\bar x))} \sum _{k\in \mathcal I} \int_{B^\X_{\rho_k} (z_k)} f\,\d\mm.
\end{equation}
Since \(u\) is \(\sqrt{N}\)-Lipschitz and  \(u(z_k)\in \partial B^{\R^N}_{r_k}(y_k)\) we have
\[
u(B^\X_{\rho_k}(z_k))\subset u(B^\X_{r_k/\bar\delta}(z_k))\subset B^{\R^N}_{\sqrt{N}r_k/\bar\delta}(u(z_k))\subset B^{\R^N}_{(\sqrt{N}+1)r_k/\bar\delta}(y_k)=B^{\R^N}_{\lambda(N) r_k}(y_k).
\]
This inclusion and the fact that, by property $(iii)$ in Lemma \ref{le:colding2}, the balls \(B^{\R^N}_{\lambda(N) r_k}(y_k)\)  are disjoint, grant that  the balls \(B^\X_{\rho_k}(z_k)\subset B^\X_{2/\bar \delta}(\bar x) \) are disjoint as well.  Hence from \eqref{eq:step1} we get
\begin{equation}
\label{e:B}
\sum_{k\in \mathcal I} r_k^N\leq \frac{C(\delta_1,N)}{\bar\delta\,\mm(B^\X_{2/\bar \delta}(\bar x))}   \int_{B^\X_{2/\bar \delta} (\bar x)} f\,\d\mm\stackrel{\eqref{eq:quasi22}}\leq \frac{C(\bar \delta,N)}\eta^2.
\end{equation}
}

{\underline{\emph{Step 2.2.5}, Estimate on  \(\sum_{k\in \mathcal B} r_{k}^{N}\):} Thanks to \eqref{e:boundaries} and \eqref{e:boundary-close} we get that \(\dist(y_{k} , \partial B_{1}^{\R^{N}})\le \sigma\). In particular the balls \(B^{\R^N}_{\lambda(N) r_k}(y_k)\) are disjoint and, since \(r_{k}\le \sigma/\sqrt{N}\),  contained in \(C(N)\sigma\) neighborhood of \(\partial B_{1}\). Hence 
\begin{equation}
\label{e:I}
\sum_{k\in \mathcal I} r_k^N \le C(N)\sigma 
\end{equation}
}

{\underline{\emph{Step 2.2.6}, Conclusion:} 
Combining \eqref{e:I} and \eqref{e:B} with  property $(iv)$ in Lemma \ref{le:colding2}  we get 
\[
\mathcal L^N\big(B^{\mathbb R^N}_1\setminus u(\overline{B^X_1(\bar x)}\big)\leq C(N)\sum_{k\in \mathcal I\cup \mathcal B} r_k^N \le C(N, \bar \delta)(\eta+\sigma).
\]
The conclusion comes plugging this bound and \eqref{eq:pezzofacile} into \eqref{eq:pezzi} and  by picking $\eta$ and \(\sigma\), and thus $\delta_2$, sufficiently small w.r.t.\ $\eps$.}
\end{proof}

In the proof of the above proposition we have used the following  covering {Lemma from ~\cite{Colding97}, we report here its simple proof for the sake of completness}.
\begin{lemma}\label{le:colding2}Let $N\in\N$, $N\geq 1$, and $\lambda\geq 1$. Then there exists a constant $C=C(\lambda,N)$ such that the following holds. For every  \(A\subsetneq B_1^{\R^N}(0)\) open  there exists a finite family of balls \(\{B^{\R^N}_{r_k}(y_k)\}_{k=1}^M\),  such that
\begin{itemize}
\item[(i)] \(B^{\R^N}_{r_k}(y_k)\subset A\) for every $k$,
\item[(ii)] $\big(\partial{B^{\R^N}_{r_k}(y_k)}\cap \partial A\big)\setminus\partial B_1^{\R^N}(0)\ne \emptyset\) for every $k$,
\item[(iii)]The family  \( \{B^{\R^N}_{\lambda r_k}(y_k)\}_{k=1}^M\) is disjoint,
\item[(iv)] It holds
\begin{equation}\label{fine}
\LL^N(A)\le C(\lambda, N)\sum_{k=1}^M r_k^N.
\end{equation}
\end{itemize}
\end{lemma}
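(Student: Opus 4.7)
My plan is a Vitali-type covering argument. Write $F := \partial A \cap B_1^{\R^N}(0) = \partial A \setminus \partial B_1^{\R^N}(0)$; this is non-empty because $A$ is a proper open subset of the connected open ball $B_1^{\R^N}(0)$. For every $y \in A$ with $\dist(y,F) \le 1-|y|$, I set $r(y) := \dist(y,F)$ and $B(y) := B_{r(y)}^{\R^N}(y)$, the idea being that such balls satisfy (i) and (ii) termwise. Indeed, the straight segment from any hypothetical $z \in B(y) \cap A^c$ to $y$ remains inside the convex set $B_1^{\R^N}(0)$ and must cross $\partial A$ at a point of $F$ nearer to $y$ than $r(y)$, a contradiction; hence $B(y) \subset A$. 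Moreover $F$ is closed in $B_1^{\R^N}(0)$ and $\overline{B(y)} \subset \overline{B_1^{\R^N}(0)}$, so the infimum defining $r(y)$ is attained on $\partial B(y)$, giving $\partial B(y) \cap F \ne \emptyset$.

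Next I would apply a standard Vitali-type covering theorem to the family $\mathcal G := \{B(y)\}$ (the $(2\lambda+1)$-enlargement variant, a direct adaptation of the classical $5B$-argument) to extract a countable subfamily $\{B_{r_k}(y_k)\}_{k\in\N}$ whose $\lambda$-enlargements are pairwise disjoint, i.e.\ property (iii), with $\bigcup \mathcal G \subset \bigcup_k B_{(2\lambda+1)r_k}(y_k)$.

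The main work is then the volume estimate (iv). Set $A^{\mathrm{bad}} := \{y \in A : \dist(y,F) > 1-|y|\}$ and $A^{\mathrm{good}} := A \setminus A^{\mathrm{bad}} \subset \bigcup \mathcal G$. The key claim is
\[
\LL^N(A^{\mathrm{bad}}) \le C_0(N)\, \LL^N(A^{\mathrm{good}})
\]
for a constant $C_0$ depending only on $N$. To prove it I would construct a Lipschitz retraction $\Phi : A^{\mathrm{bad}} \to A^{\mathrm{good}}$ as follows: for each $y \in A^{\mathrm{bad}}$, parameterize the segment $c(s) := y + s(p-y)/|p-y|$ from $y$ toward a closest $p \in F$, observe that $s \mapsto \dist(c(s),F) - (1-|c(s)|)$ is $2$-Lipschitz, strictly positive at $s=0$ and strictly negative at $s=|y-p|$, hence admits a first zero $s^*$, and set $\Phi(y) := c(s^*)$. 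The segment-crossing argument again keeps $c(s) \in A$ up to $s^*$, and a direct differentiation shows that the $N$-dimensional Jacobian of $\Phi$ is bounded by a function of $N$ alone.

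Putting everything together would give
\[
\LL^N(A) \le (1+C_0(N))\, \LL^N(A^{\mathrm{good}}) \le (1+C_0(N))\, \omega_N (2\lambda+1)^N \sum_k r_k^N,
\]
which is (iv). A final truncation, keeping only finitely many of the $r_k$ so that the retained sum is at least half of the total, delivers the finite family while only doubling the constant. The main obstacle I expect is the Jacobian estimate for $\Phi$: the geometric compression rate between source and target must be controlled by a dimensional constant uniformly in the possibly very irregular set $A$, and the retraction must be verified to stay inside $A$ all along the segments, which requires some care when $A$ has complicated topology.
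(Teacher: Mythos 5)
Your approach breaks at the central measure estimate, which is in fact false. You claim that
\[
\LL^N(A^{\mathrm{bad}}) \le C_0(N)\, \LL^N(A^{\mathrm{good}})
\]
with $A^{\mathrm{good}}=\{y\in A:\dist(y,F)\leq 1-|y|\}$, $F=\partial A\cap B_1^{\R^N}(0)$, and $C_0$ depending only on $N$. Take $A=B_1^{\R^N}(0)\setminus\{p_0\}$ with $p_0=(1-\delta)e_1$. Then $F=\{p_0\}$, and $A^{\mathrm{good}}$ is the open ellipsoid $\{y:|y-p_0|+|y|\le 1\}$ (minus $p_0$), whose minor semi-axis is $\tfrac12\sqrt{2\delta-\delta^2}$; its volume tends to $0$ as $\delta\downarrow 0$, while $\LL^N(A)\equiv\omega_N$. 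So no dimensional constant $C_0$ can exist, and the chain $\LL^N(A)\le(1+C_0)\LL^N(A^{\mathrm{good}})\le\dots$ collapses. (Independently, the retraction $\Phi$ is not even continuous in general — the nearest point $p(y)\in F$ is non-unique — and it is manifestly non-injective along rays toward $F$, so the area formula cannot deliver a lower bound on $\LL^N(\Phi(A^{\mathrm{bad}}))$ of the type you would need; but this is secondary given that the target inequality itself fails.) Note that the lemma is nonetheless true in this example: the single ball $B_{1-\delta}(0)$ satisfies (i)–(ii) and already yields (iv). The moral is that the useful balls need not be centered at points where the inradius is small, and restricting the family $\mathcal G$ to balls of the form $B_{\dist(y,F)}(y)$ with $y\in A^{\mathrm{good}}$ throws away exactly the large balls that carry the volume.

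The paper's proof sidesteps the issue with a growing-ball lemma rather than a retraction. One first picks a compact $K\subset A$ with $\LL^N(K)\ge\tfrac12\LL^N(A)$, covers $K$ by finitely many balls with closure in $A$, and then \emph{inflates} each such ball $B_r(v)$ along the one-parameter family $B_t:=B_{(1-t)r+t}(tv)$, $t\in[0,1)$, to the least $t_0$ for which $\overline{B_{t_0}}$ meets $\R^N\setminus A$; the resulting $B'=B_{t_0}$ contains the original ball, is contained in $A$, and its boundary meets $\partial A$ inside $B_1^{\R^N}(0)$ (since $\overline{B_t}\subset B_1(0)$ for $t<1$). A greedy Vitali-type selection on this enlarged finite family gives (iii), and since the $3\lambda$-inflations of the selected balls still cover $K$, one obtains $\LL^N(A)\le 2\omega_N(3\lambda)^N\sum r_k^N$. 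The essential idea you are missing is that one should enlarge an arbitrary cover of (most of) $A$ to touch the boundary — enlargement only helps the final inequality — rather than build the cover from the distance function to $\partial A$.
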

\begin{proof} We claim that if $B\subset A$ is a ball with $\overline B\subset B_1(0)$, then there exists another ball $B'$ such that
\begin{equation}
\label{eq:bp}
B\subset B'\subset A\qquad\text{ and }\qquad \big(\partial {B'}\cap\partial A\big)\setminus\partial B_1^{\R^N}(0)\ne \emptyset.
\end{equation}
Indeed, let $B=B_r(v)$, put $B_t:=B_{(1-t)r+t}(tv)$, notice that the family $(0,1)\ni t\mapsto B_t$ is increasing and that $\overline B_t\subset B_1(0)$ for every $t\in[0,1)$. Since $\cup_{t\in[0,1)}B_t=B_1(0)$ and $A$ is strictly included in $B_1(0)$,  by a simple compactness argument we find a least $t_0\in[0,1)$ such that $\overline B_{t_0}\cap (\R^N\setminus A)\neq\emptyset$ and since $\overline B_{t_0}\subset B_1(0)$ we must also have $\overline B_{t_0}\cap (B_1(0)\setminus A)\neq\emptyset$. To conclude the proof of the claim simply notice that from the trivial identity $B_{t_0}=\cup_{t\in[0,t_0)}\overline B_t$ and the minimality of $t_0$ we have $B_{t_0}\subset A$, hence $B':=B_{t_0}$ does the job. 

Now let $K\subset A$ be compact so that $\mathcal L^N(A)\leq 2\mathcal L^N(K)$ and, by compactness, find a finite family of balls $\{B_{r_j}^{\R^N}(y_j)\}_{j=1}^L$ with closure included in $A$ and covering $K$. Up to replace each of these balls with the corresponding one $B'$ as in \eqref{eq:bp}, we can assume  that this family satisfies $(ii),(iii)$.

We shall now build a subfamily for which  $(iii),(iv)$ also hold. Up to reordering we can assume that  $r_1\geq \ldots\geq r_L$.
Then put $j_1:=1$ and define recursively 
\[
j_n:=\text{least index $j$ such that }B_{\lambda r_j}^{\R^N}(y_j)\cap \bigcup_{i=1}^{n-1}B_{\lambda r_{j_i}}^{\R^N}(y_{j_i})=\emptyset.
\]
Since the original family was finite, this process ends at some step $M$ and, by construction, the family $\{B_{r_{j_i}}^{\R^N}(y_{j_i})\}_{j=1}^M$ fulfils $(i),(ii),(iii)$. Also, by construction for every $j>1$ there is $i\in\{1,\ldots,M\}$ such that $r_{j_i}\geq r_j$ and $B_{\lambda r_j}^{\R^N}(y_{j})\cap B_{\lambda r_{j_i}}^{\R^N}(y_{j_i})\neq\emptyset$, hence $B_{\lambda r_j}^{\R^N}(y_{j})\subset B_{3\lambda r_{j_i}}^{\R^N}(y_{j_i})$ and thus
\[
K\subset \bigcup_{j=1}^LB_{r_j}^{\R^N}(y_j)\subset \bigcup_{j=1}^LB_{\lambda r_j}^{\R^N}(y_j)\subset \bigcup_{i=1}^MB_{3\lambda r_{j_i}}^{\R^N}(y_{j_i}),
\]
so that \eqref{fine} holds with $C(\lambda,N):=2\omega_N(3\lambda)^N$.
\end{proof}
We can now prove the continuity of $\HH^N$ under a uniform Riemannian-curvature-dimension condition:
\begin{proofb}{\bf of Theorem \ref{thm:contvol}}\\
\noindent\underline{Set up}  If $N\notin\N$, Theorem \ref{thm:rett} implies that $\HH^N(\X)=0$ for any $\RCD(K,N)$ space $\X$, hence in this case there is nothing to prove. We shall therefore assume $N\in\N$.

 Let $(Z_n)\subset\mathbb B_{K,N,R}$ be GH-converging to some limit $Z_\infty\in{\mathbb B}_{K,N,R}$ and for each $n\in\N$, let $(\X_n,\sfd_n,\mm_n,x_n)$ be an $\RCD(K,N)$ space such that $Z_n=\bar B^{\X_n}_R(x_n)$. Up to replace $\mm_n$ with $\mm_n/\mm_n(Z_n)$ we can, and will, assume that $\mm_n(Z_n)=1$ for every $n\in\N$. Then by the compactness of the class of $\RCD(K,N)$ spaces \eqref{eq:Gcomp}, up to pass to a subsequence, not relabeled, we  have  $(\X_n,\sfd_n,\mm_n,x_n)\stackrel{pmGH}\to (\X_\infty,\sfd_\infty,\mm_\infty,x_\infty)$ for some pointed $\RCD(K,N)$  space $\X_\infty$. It is then clear  that $Z_\infty=\bar B_R^{\X_\infty}(x_\infty)$. To conclude it is now sufficient to prove that $\HH^N(Z_n)\to\HH^N(Z_\infty)$, because such continuity property is independent on the subsequence chosen. 

\noindent\underline{Upper semicontinuity} Let $(\X,\sfd,\mm)$ be a generic  $\RCD(K,N)$ space  with $\supp(\mm)=\X$. We claim that
\begin{equation}
\label{eq:mhn}
\HH^N=\frac1{\vartheta_N[\X]}\mm
\end{equation}
where it is intended that $\frac1{\vartheta_N[\X]}(x)=0$ if $\vartheta_N[\X](x)=\infty$. To see this start observing that the Bishop-Gromov inequality \eqref{e:BG} grants that 
\begin{equation}
\label{eq:bbasso}
\vartheta_N[\X](x)\geq C(K, N) \mm(B_1(x)),
\end{equation}
so that $\frac1{\vartheta_N[\X]}\in L^1_{loc}(\X,\mm)$ and the right hand side of \eqref{eq:mhn} defines a Radon measure. Then, in the notation of Theorem \ref{thm:rett}, by \eqref{eq:densk} for $k=N$ it follows that equality holds in \eqref{eq:mhn} if we restrict both sides to $\cup_jU^N_j$, so that to conclude it is sufficient to show that on  $\X\setminus (\cup_jU^N_j)$ both sides of \eqref{eq:mhn} are zero. The fact that $\HH^N(\X\setminus (\cup_jU^N_j))=0$ is a trivial consequence of Proposition \ref{prop:basehn} and Theorem \ref{thm:rett}, while \eqref{eq:mac} and \eqref{eq:densk} imply that for $k<N$, $k\in\N$ we have $\vartheta_N[\X]=\infty$ $\mm$-a.e.\ on $\cup_jU_j^k$. Hence our claim \eqref{eq:mhn} is proved.

Now we apply Lemma \ref{le:fatou} to the functions 
\[
f_n:=-\frac1{\vartheta_N[\X_n]}\nchi_{\bar B_R^{\X_n}(x_n)},\qquad n\in\N\cup\{\infty\},
\]
 point $(i)$ of Lemma \ref{lm:propt} grants that \eqref{eq:glimi} is satisfied, while from \eqref{eq:bbasso} and the assumption $\mm_n(Z_n)=1$ it easily follows that the $f_n$'s are uniformly bounded from below by a continuous function with bounded support. Hence the conclusion of Lemma \ref{le:fatou} gives
\[
-\HH^N(Z_\infty)\stackrel{\eqref{eq:mhn}}=\int f_\infty\,\d\mm_\infty\leq\limi_{n\to\infty}\int f_n\,\d\mm_n\stackrel{\eqref{eq:mhn}}=\limi_{n\to\infty}-\HH^N(Z_n)
\]
and thus the desired upper semicontinuity:
\[
\lims_{n\to\infty}\HH^N(Z_n)\le \HH^N(Z_\infty).
\]
\noindent\underline{Lower semicontinuity} Theorem \ref{thm:rett} ensures that for $\HH^N$-a.e.\ $x\in Z_\infty$ the metric tangent space of $\X_\infty$ at $x$ is $\R^N$.

Now fix $\eps>0$, let $\delta_2:=\delta_2(\eps,N)$ be given by Proposition \ref{thm:MN2} and notice that what we just said grants that for $\HH^N$-a.e.\ $x\in Z_\infty$ there exists $\bar r=\bar r(x)$ such that for every $r\in(0,\bar r)$ we have 
\[
\sfd_{\rm GH}(B^{\X_\infty}_{r/\delta_2}(x),B^{\R^N}_{r/\delta_2}(0))\leq r\delta_2,
\]
thus, since Proposition \ref{prop:basehn} grants that $\HH^N(Z_\infty)<\infty$, we can use Vitali's covering lemma (see e.g.\ \cite[Theorem 2.2.2]{AmbrosioTilli04}) to find a finite number of points $y_{\infty,i}\in B^{\X_\infty}_1(x_\infty)$ and radii $r_i>0$, $i=1,\ldots,M$, such that
\begin{subequations}
\begin{align}
\label{eq:matt1}
\sfd_{\rm GH}(B^{\X_\infty}_{r_i/\delta_2}(y_{\infty,i}),B^{\R^N}_{r_i/\delta_2}(0))&\leq r_i\delta_2\qquad&&\forall i,\\
\label{eq:matt2}
\bar B^{\X^\infty}_{r_i}(y_{\infty,i})\cap\bar B^{\X^\infty}_{r_{i'}}(y_{\infty,i'})&=\emptyset \qquad&&\forall i\neq i',\\
\label{eq:matt3}
\sfd_\infty(y_{\infty,i},x_\infty)+r_i&<1\qquad&&\forall i,\\
\label{eq:matt4}
\HH^N(B^{\X_\infty}_1(x_\infty))&\leq\eps+\omega_N\sum_ir_i^N.
\end{align}
\end{subequations}
For each $i$ find a sequence $y_{n,i}\stackrel{GH}\to y_{\infty,i}$ (recall \eqref{eq:exseq}) and notice that there is $\bar n\in \N$ such that for every $n\geq \bar n$ properties \eqref{eq:matt1}, \eqref{eq:matt2}, \eqref{eq:matt3} hold with $y_{n,i}$ and $x_n$ in place of $y_{\infty,i},x_\infty$ respectively for any $i$. In particular, from \eqref{eq:matt1} for $y_{n,i}$ we can apply the scaled version of Proposition \ref{thm:MN2} to deduce that
\begin{equation}
\label{eq:matt5}
\HH^N(B^{\X_n}_{r_i}(y_{n,i}))\geq (1-\eps)\omega_Nr_i^N\qquad\forall i
\end{equation}
and since  \eqref{eq:matt2}, \eqref{eq:matt3} for the $y_{n,i}$'s ensure that the balls $B^{\X_n}_{r_i}(y_{n,i})$, $i=1,\ldots,M$, are disjoint and contained in $B_1^{\X_n}(x_n)$ for every $n\geq\bar n$ we deduce that
\[
\begin{split}
\HH^N(B_1^{\X_n}(x_n))\geq\sum_i\HH^N(B^{\X_n}_{r_i}(y_{n,i}))\stackrel{\eqref{eq:matt5}}\geq (1-\eps)\omega_N\sum_ir_i^N\qquad\forall n\geq \bar n.
\end{split}
\]
Hence  from \eqref{eq:matt4} we obtain
\[
\limi_{n\to\infty}\HH^N(B_1^{\X_n}(x_n))\geq (1-\eps)\big(\HH^N(B^{\X_\infty}_1(x_\infty))-\eps\big)
\]
and, recalling \eqref{eq:hnsfere}, we conclude by the arbitrariness of $\eps>0$.
\end{proofb}
\subsection{Dimension gap}
In this section we shall prove the `dimension gap' Theorem \ref{thm:dimgap} and in doing so we will closely follows the arguments in \cite[Section 3]{Cheeger-Colding97I}. 

The crucial part of argument, provided in  Proposition \ref{le:nm1}, is the proof that the Hausdorff dimension of the set of points for which no tangent space splits off a line is at most $N-1$. To clarify the structure of the proof, we isolate in the following lemma the measure-theoretic argument which will ultimately lead to  such estimate on the dimension (see \cite[Proposition 3.2]{Cheeger-Colding97I}):
\begin{lemma}\label{le:figo}
Let $\X$ be an $\RCD(K,N)$ space and $\Omega_\eta\subset \X$  Borel subsets indexed by a parameter $\eta> 0$ such that  it holds
\begin{equation}
\label{eq:hypmes}
\mm(\Omega_\eta)\leq C\eta\qquad \forall\eta\in(0,c)
\end{equation}
for some $C,c>0$. For $\tau\in(0,1)$  consider the sets
\begin{equation}
\label{eq:omegatau}
\Omega_{\tau,\eta}:=\{x\in\Omega_\eta\ :\ \sfd(x,\X\setminus\Omega_\eta)>\tau\eta\}.
\end{equation}
Then
\[
\dim_\HH\Big(\bigcap_{j\in\N}\bigcup_{k\geq j}\Omega_{\tau,2^{-k}}\Big)\leq N-1.
\]
\end{lemma}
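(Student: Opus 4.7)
The plan is to show $\HH^s(E)=0$ for every $s>N-1$, where $E:=\bigcap_{j\in\N}\bigcup_{k\geq j}\Omega_{\tau,2^{-k}}$, via a Vitali-type covering of each $\Omega_{\tau,2^{-k}}$ that exploits the measure decay \eqref{eq:hypmes} against a uniform volume lower bound coming from Bishop--Gromov.

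First I would reduce to a local estimate: since $\X$ is $\sigma$-compact it suffices to show $\dim_\HH(E\cap K)\leq N-1$ for an arbitrary compact $K\subset\X$. On such a $K$ one obtains, from Bishop--Gromov \eqref{e:BG} and the asymptotics \eqref{e:limite}, a uniform local volume lower bound
\[
\mm(B_r(x))\geq c_K\, r^N\qquad\forall x\in K,\ r\in(0,r_K),
\]
for suitable constants $c_K,r_K>0$. Indeed, choosing $R_0>\diam K+1$, every ball $B_{R_0}(x)$ with $x\in K$ contains a fixed ball of positive $\mm$-measure (recall $\supp\mm=\X$), so $\inf_{x\in K}\mm(B_{R_0}(x))>0$; the monotonicity of $r\mapsto\mm(B_r(x))/v_{K,N}(r)$ then transfers this bound to small scales.

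Next, for each $k$ large enough that $\tau 2^{-k}<r_K$, take a maximal $(\tau 2^{-k})$-separated subset $\{x_{k,i}\}_{i\in I_k}$ of $\Omega_{\tau,2^{-k}}\cap K$. Maximality ensures that $\{B_{\tau 2^{-k}}(x_{k,i})\}_i$ covers $\Omega_{\tau,2^{-k}}\cap K$, while the half-sized balls $B_{\tau 2^{-k}/2}(x_{k,i})$ are pairwise disjoint and, by the very definition of $\Omega_{\tau,2^{-k}}$, each contained in $\Omega_{2^{-k}}$. Combining the volume lower bound with \eqref{eq:hypmes} yields
\[
|I_k|\,c_K(\tau 2^{-k}/2)^N\leq\sum_{i\in I_k}\mm(B_{\tau 2^{-k}/2}(x_{k,i}))\leq\mm(\Omega_{2^{-k}})\leq C\cdot 2^{-k},
\]
hence $|I_k|\leq C'\cdot 2^{k(N-1)}$ for a constant $C'$ depending only on $K,\tau,C,N$.

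Finally, fix $s>N-1$. For every sufficiently large $j$ the covering $E\cap K\subset\bigcup_{k\geq j}\bigcup_{i\in I_k}B_{\tau 2^{-k}}(x_{k,i})$ gives
\[
\HH^s_{2\tau 2^{-j}}(E\cap K)\leq\sum_{k\geq j}|I_k|\,(2\tau 2^{-k})^s\leq C''\sum_{k\geq j}2^{k(N-1-s)},
\]
which tends to $0$ as $j\to\infty$ since the series is geometric of ratio $2^{N-1-s}<1$. Thus $\HH^s(E\cap K)=0$ and the conclusion follows by $\sigma$-compactness. The only delicate step is securing the uniform lower volume bound on compact sets; once that is in place, the estimate is the standard passage from a measure upper bound to a Hausdorff-dimension bound.
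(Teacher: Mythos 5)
Your proof is correct and follows essentially the same strategy as the paper's: bound the number of $\tau\eta$-separated points in $\Omega_{\tau,\eta}$ by packing disjoint balls of radius $\tau\eta/2$ inside $\Omega_\eta$, compare $\mm(\Omega_\eta)\leq C\eta$ against the Bishop--Gromov volume lower bound $\mm(B_r(x))\gtrsim r^N$, and sum the resulting bound on the $\eta^{N-1+\eps}$-premeasure over the dyadic scales. The only presentational difference is the localization: you intersect the final set $E$ with a compact $K$ and use $\sigma$-compactness (which sidesteps the fact that replacing $\Omega_\eta$ by $\Omega_\eta\cap B_R(\bar x)$ changes the definition of $\Omega_{\tau,\eta}$), whereas the paper replaces $\Omega_\eta$ itself by $\Omega_\eta\cap B_R(\bar x)$ at the outset; your variant is if anything slightly cleaner on this point.
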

\begin{proof} {For given $\bar x\in\X$, $R>0$ let $\Omega_{R,\eta}:=\Omega_\eta\cap B_R(\bar x)$ and $\Omega_{\tau,R,\eta}$ be defined as in \eqref{eq:omegatau} with $\Omega_{R,\eta}$ in place of \(\Omega_\eta\). Then for every $\tau,\eta\in(0,1)$ the definition easily gives $\Omega_{\tau,\eta}\cap B_{R-1}(\bar x)\subset \Omega_{\tau,R,\eta}$ and therefore 
\[
\dim_\HH\Big(\bigcap_{j\in\N}\bigcup_{k\geq j}\Omega_{\tau,2^{-k}}\Big)=\lim_{R\to\infty}\dim_\HH\Big(\bigcap_{j\in\N}\bigcup_{k\geq j}\Omega_{\tau,2^{-k}}\cap B_{R-1}(\bar x)\Big)\leq\lim_{R\to\infty}\dim_\HH\Big(\bigcap_{j\in\N}\bigcup_{k\geq j}\Omega_{\tau,R,2^{-k}}\Big).
\]
Hence }up to replacing $\Omega_\eta$ with $\Omega_\eta\cap B_R(\bar x)$ and then sending $R\uparrow\infty$  we can, and will, assume that $\Omega_\eta\subset B_R(\bar x)$ for every $\eta\in(0,1)$.

Now let $x_1,\ldots,x_n\in \Omega_{\tau,\eta}$ be with $\sfd(x_i,x_j)\geq \tau\eta$ and denote by $C'=C(K,N,R,\bar x)$ a constant depending only on $K,N,R,\bar x$ (and thus independent on  $\tau,\eta$) whose values might change in the various instances it appears: since the balls $B_{\tau\eta/2}(x_i)$ are disjoint and  contained in $\Omega_\eta$ we have
\[
\begin{split}
n&\stackrel{\eqref{e:BG}}\leq C'\sum_{i=1}^n\mm(B_1(x_i))\stackrel{\eqref{e:BG}}\leq C'(\tau\eta)^{-N} \mm\Big(\bigcup_{i=1}^nB_{\frac{\tau\eta}2}(x_i)\Big)\leq C' (\tau\eta)^{-N}\mm( \Omega_\eta )\stackrel{\eqref{eq:hypmes}}\leq  CC'(\tau\eta)^{-N}\eta.
\end{split}
\]
If the family $\{x_1,\ldots,x_n\}$ is maximal we have $ \Omega_{\tau,\eta}\subset \bigcup_{i=1}^n B_{\tau\eta }(x_i)$ and thus for any $\eps>0$ the above implies
\begin{equation}
\label{eq:heps}
\HH^{N-1+\eps}_{2\tau\eta}( \Omega_{\tau,\eta})\leq CC'(\tau\eta)^{-N}\eta (2\tau\eta)^{N-1+\eps}\leq CC'\tau^{\eps-1}\eta^\eps.
\end{equation}
Hence for any $j\in\N$ we have
\[
\begin{split}
\HH^{N-1+\eps}_{2\tau 2^{-j}}\Big(\bigcap_{j'\in\N}\bigcup_{k\geq j'}\Omega_{\tau,2^{-k}}\Big)&\leq \HH^{N-1+\eps}_{2\tau 2^{-j}}\Big(\bigcup_{k\geq j}\Omega_{\tau,2^{-k}}\Big)\leq\sum_{k\geq j}\HH^{N-1+\eps}_{2\tau 2^{-k}}(\Omega_{\tau,2^{-k}})\\
\text{by \eqref{eq:heps}}\qquad\qquad&\leq \sum_{k\geq j} CC'\tau^{\eps-1}2^{-\eps k}=CC'\tau^{\eps-1}2^{\eps(1-j)}
\end{split}
\]
and letting $j\uparrow\infty$ we conclude.
\end{proof}
Let us now give few definitions, following \cite[Definition 2.10]{Cheeger-Colding97I}.  For a given $\RCD(K,N)$ space $\X$ with measure having full support we define
\[
\mathcal E_1(\X):=\big\{x\in\X\ :\ \text{every metric tangent space at $x$ splits off a line}\big\}
\]  
where we say that a metric measure space $(\X,\sfd,\mm)$ splits off a line provided $\X$ is isomorphic to the product of another  metric measure space $(\X',\sfd',\mm')$ and the real line, i.e.\ there is a measure preserving isometry $\Phi:\X'\times\R\to\X$, where the measure on $\X'\times\R$ is the product of $\mm'$ and the Lebesgue measure and the distance is given by
\[
\sfd^2_{\X'\times\R}\big((x,t),(y,s)\big):=\sfd'(x,y)^2+|t-s|^2,\qquad\forall x,y\in\X',\ t,s\in\R.
\]
Also, for given $\bar x\in\X$ we put
\[
\begin{split}
\mathcal E_{1,\bar x}(\X):=\left\{\begin{array}{rl}
&\text{$\forall \eps,\eps'>0$ there exists $\bar r=\bar r(\eps,\eps',x)$ such that for every $r\in(0,\bar r)$}\\
x\neq\bar x\ :&\text{there is $y\in B_{\eps r}(x)$ and a unit speed geodesic}\\
&\text{$\gamma:[0,\sfd(\bar x,y)+\tfrac{r}{\eps'}]\to\X$ such that $\gamma_0=\bar x$ and $\gamma_{\sfd(\bar x,y)}=y$}
\end{array}\right\}.
\end{split}
\]
{We claim that
\begin{equation}
\label{eq:trivialE}
\mathcal E_{1,\bar x}(\X)\subset \mathcal E_{1}(\X)\qquad\forall \bar x\in\X.
\end{equation}
Indeed, let $x\in \mathcal E_{1,\bar x}(\X)$, $\eps,\eps'>0$ and $\bar r=\bar r(\eps,\eps',x)$ as above. Then for $r\in(0,\bar r)$ let  $y,\gamma$ as above and notice that the appropriate restriction of $\gamma$ is a geodesic of length $\geq 2\min\{\frac r{\eps'},\sfd(x,\bar x)-\eps r\}$ whose midpoint $y$ has distance $\leq\eps r$ from $x$. Hence in the rescaled space $(\X_r,\sfd_r):=(\X,\sfd/r)$ there is a geodesic of length $\geq 2\min\{\frac 1{\eps'},\frac{\sfd(x,\bar x)}r-\eps \}$ whose midpoint has distance $\leq\eps$ from $x$. Letting $r\downarrow0$ we conclude by a compactness argument that on every tangent space at $x$ there is a geodesic of length $\geq \frac2{\eps'}$ whose midpoint has distance $\leq \eps$ from the origin, thus the arbitrariness of $\eps,\eps'$ and again a compactness argument ensure the existence of a line through the origin. Since every tangent space to an $\RCD(K,N)$ space is a $\RCD(0,N)$ space, the splitting theorem \cite{Gigli13}, \cite{Gigli13over}  gives \eqref{eq:trivialE}.} 

We then have the following:
\begin{proposition}\label{le:nm1}
Let $(\X,\sfd,\mm)$ be an $\RCD(K,N)$ space and $\bar x\in \X$. Then
\begin{equation}
\label{eq:dimest}
\dim_\HH(\X\setminus \mathcal E_{1,\bar x}(\X))\leq N-1
\end{equation}
and thus also
\begin{equation}
\label{eq:dimest2}
\dim_\HH(\X\setminus \mathcal E_{1}(\X))\leq N-1.
\end{equation}
\end{proposition}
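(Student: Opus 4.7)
The plan is to apply the measure-theoretic Lemma \ref{le:figo} to a suitable family of ``obstruction sets'' $\Omega_\eta$ encoding, at scale $\eta$, the failure of the extension property in the definition of $\mathcal E_{1,\bar x}(\X)$. Once \eqref{eq:dimest} is proved, \eqref{eq:dimest2} is immediate from \eqref{eq:trivialE}.

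First I would reduce to a countable family of bad sets. The extension property is monotone in $\eps,\eps'$ (smaller values are more restrictive), so setting
\[
A_m:=\Big\{x\neq\bar x:\exists\,r_k\downarrow0\text{ such that no }y\in B_{r_k/m}(x)\text{ is $mr_k$-extendable from }\bar x\Big\},
\]
where ``$s$-extendable from $\bar x$'' means $y$ sits at parameter $\sfd(\bar x,y)$ on a unit-speed geodesic from $\bar x$ of total length at least $\sfd(\bar x,y)+s$, we have $\X\setminus\mathcal E_{1,\bar x}(\X)\subset\{\bar x\}\cup\bigcup_{m\in\N}A_m$. Intersecting with annuli $B_R(\bar x)\setminus\overline{B_{1/R}(\bar x)}$ and using the countable stability of Hausdorff dimension, it suffices to bound $\dim_\HH\bigl(A_m\cap(B_R(\bar x)\setminus\overline{B_{1/R}(\bar x)})\bigr)\leq N-1$ for each fixed $m$ and $R$.

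Next I would set, for $\eta$ small,
\[
\Omega_\eta:=\bigl\{x\in B_R(\bar x)\setminus\overline{B_{1/R}(\bar x)}:\text{no }y\in B_{\eta/m}(x)\text{ is $m\eta$-extendable from }\bar x\bigr\}.
\]
Arzel\`a--Ascoli in the Polish space of $1$-Lipschitz curves makes $\Omega_\eta$ Borel, and by construction every $x\in A_m$ in the annulus, with witnessing sequence $r_k$, satisfies $x\in\Omega_{r_k}$. For each $r_k$ pick $n_k$ with $r_k\in[2^{-n_k-1},2^{-n_k})$; monotonicity of the obstruction in $\eta$ and in the ``width'' of the ball around $x$ then forces $x$ to lie $\tau\cdot 2^{-n_k}$-deep inside $\Omega_{2^{-n_k}}$ for some $\tau=\tau(m)>0$, so that $x\in\bigcap_j\bigcup_{k\geq j}\Omega_{\tau,2^{-k}}$ in the notation of Lemma~\ref{le:figo}.

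The key estimate, and the main obstacle, is the measure bound
\[
\mm(\Omega_\eta)\leq C(K,N,R,m,\bar x)\,\eta.
\]
The plan follows the classical Cheeger--Colding template via the excess function $e_{\bar x,z}:=\sfd_{\bar x}+\sfd_z-\sfd(\bar x,z)$. For $x\in\Omega_\eta$ and any $z$ lying ``beyond $x$'' at distance close to $\sfd(\bar x,x)+m\eta$, every minimizing geodesic from $\bar x$ to $z$ must avoid $B_{\eta/m}(x)$, which forces a pointwise lower bound $e_{\bar x,z}\gtrsim \eta$ on $B_{\eta/m}(x)$. Against this, the $\RCD$ Laplacian comparison of Gigli gives $\Delta e_{\bar x,z}\leq C(K,N,R)$ as a Radon measure on the annulus, and combining this with the non-negativity of $e_{\bar x,z}$ and the good cut-off functions of \cite{AmbrosioMondinoSavare13-2} and \cite{Mondino-Naber14} yields integral upper bounds $\int_{B_\varrho(x)}e_{\bar x,z}\,\d\mm\leq C\varrho\,\mm(B_\varrho(x))$ at small scales. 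Averaging over a maximal $(\eta/m)$-separated family of $z$'s on a large sphere and covering $\Omega_\eta$ by a Vitali family of balls of radius $\eta/m$ turns the lower and upper bounds into the desired $O(\eta)$ estimate. Once this is established, Lemma~\ref{le:figo} gives $\dim_\HH(A_m\cap(B_R\setminus\overline{B_{1/R}}))\leq N-1$, and taking countable unions in $m$ and $R$ concludes. The only genuinely hard step is the measure bound; the remainder is essentially bookkeeping and the invocation of Lemma~\ref{le:figo}.
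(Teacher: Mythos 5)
Your reduction to a countable family of obstruction sets fed into Lemma \ref{le:figo} follows the same skeleton as the paper's proof; modulo some bookkeeping on constants (you build the ``no nearby extendable point'' requirement into $\Omega_\eta$ itself, whereas the paper keeps $\Omega^R_\eta$ as the set of points $x$ which themselves do not $\eta$-extend and then passes to the deeper set $\Omega^R_{\tau,\eta}$ from \eqref{eq:omegatau}), that part is sound. As you correctly identify, the whole weight is carried by the measure bound $\mm(\Omega_\eta)\leq C\eta$, and this is where your plan departs from the paper and has a genuine gap.

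The paper obtains this bound without ever touching the excess function: it takes the optimal geodesic plan $\ppi$ between $\mm\restr{{\rm Ann}_R(\bar x)}/\mm({\rm Ann}_R(\bar x))$ and $\delta_{\bar x}$ and uses two facts from \cite{GigliRajalaSturm13} --- the density of $(\e_t)_*\ppi$ with respect to $\mm$ is bounded above by $\big(\sigma^{(1-t)}_{K,N}(R)\big)^{-N}/\mm({\rm Ann}_R(\bar x))$, and for $t>0$ this interpolant is concentrated on points that sit in the interior of a geodesic issued from $\bar x$, hence gives no mass to $\Omega^R_{t/R}$ --- and the linear estimate follows immediately as a Jacobian-of-contraction bound. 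Your excess scheme would replace this by playing a pointwise lower bound $e_{\bar x,z}\gtrsim\eta$ on $B_{\eta/m}(x)$ against an integral upper bound $\int_{B_\varrho(x)}e_{\bar x,z}\,\d\mm\leq C\varrho\,\mm(B_\varrho(x))$, but neither half is established and together they yield nothing. With $z$ at distance roughly $\sfd(\bar x,x)+m\eta$, a minimizing geodesic from $\bar x$ to $z$ can still pass through $B_{\eta/m}(x)$: the point it meets there would only be about $(m\eta-\eta/m)$-extendable, which the definition of $\Omega_\eta$ does not exclude, so geodesic avoidance does not follow; and even after pushing $z$ further out so that avoidance does hold, this merely gives $e_{\bar x,z}>0$ on the ball, and upgrading this to $e_{\bar x,z}\gtrsim\eta$ would need a quantitative converse of Abresch--Gromoll (``small excess at $y$ forces a geodesic to pass near $y$'') that is not among the tools you invoke. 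The integral upper bound is likewise unjustified: Laplacian comparison yields $L^1$-smallness of the excess precisely when $e_{\bar x,z}$ is known to vanish somewhere in a comparable ball, which is exactly the situation you have just arranged to rule out (taking $\varrho\downarrow0$ your claimed bound would even force $e_{\bar x,z}(x)=0$). Finally, even granting both, $e\geq c\eta$ on $B_\varrho(x)$ and $\int_{B_\varrho(x)}e\,\d\mm\leq C\varrho\,\mm(B_\varrho(x))$ only give $\varrho\geq c\eta/C$, which is not a measure estimate. In short, the linear bound on $\mm(\Omega_\eta)$ comes from a contraction estimate along radial geodesics, and the optimal transport argument is the clean way to obtain it in the $\RCD$ setting; the excess function is not the right tool here.
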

\begin{proof}

\noindent\underline{Step 1: structure of the argument.} From \eqref{eq:trivialE} and \eqref{eq:dimest} the estimate \eqref{eq:dimest2} follows, hence we focus in proving \eqref{eq:dimest}. Since trivially $\dim_\HH(\{\bar x\})=0\leq N-1$, to conclude it is sufficient to prove that for any $R>2$ we have 
\begin{equation}
\label{eq:dimest3}
\dim_\HH\Big({\rm Ann}_{R/2}(\bar x)\cap \big(\X\setminus \mathcal E_{1,\bar x}(\X)\big)\Big)\leq N-1\qquad\text{where}\qquad {\rm Ann}_R(\bar x):=B_R(\bar x)\setminus B_{1/R}(\bar x).
\end{equation}
Fix $\bar x\in X$ and for $\eta>0$ define
\[
\begin{split}
\Omega^R_\eta:=\{x\in{\rm Ann}_{R}(\bar x)\ :\  \text{there is no unit speed geodesic } & \gamma:[0,\sfd(x,\bar x)+\eta]\to \X\\
&\text{such that $\gamma_0=\bar x$, $\gamma_{\sfd(x,\bar x)}=x$}\}
\end{split}
\]
and, for $\tau\in(0,1)$, define $\Omega^R_{\tau,\eta}$ as in \eqref{eq:omegatau}. We shall prove that for any $R>2$ we have
\begin{equation}
\label{eq:1}
\mm(\Omega^R_\eta)\leq  C\eta \qquad\forall \eta\in(0,\tfrac{1}{R})
\end{equation}
for some $C=C(K,N,R,\bar x)$ and
\begin{equation}
\label{eq:2}
{\rm Ann}_{R/2}(\bar x)\cap(\X\setminus\mathcal E_{1,\bar x})\subset\bigcup_{i}\Big( {\rm Ann}_{R/2}(\bar x)\cap  \bigcap_j\bigcup_{k\geq j}\Omega^R_{2^{-i},2^{-k}}\Big).
\end{equation}
Thanks to Lemma \ref{le:figo}, these are sufficient to get \eqref{eq:dimest3} and the conclusion.

\noindent\underline{Step 2: proof of \eqref{eq:1}} We assume $\mm({\rm Ann}_{R}(\bar x))>0$ or there is nothing to prove, then we put $\mu_0:=\mm({\rm Ann}_{R}(\bar x))^{-1}\mm\restr{{\rm Ann}_{R}(\bar x)}$, 
 $\mu_1:=\delta_{\bar x}$ and let $\ppi\in \prob{C([0,1],\X)}$ be the only optimal geodesic plan from $\mu_0$ to $\mu_1$ (see \cite{GigliRajalaSturm13}). Then from \cite[Theorem 3.4]{GigliRajalaSturm13} we know that $(\e_t)_*\ppi\ll\mm$ for every $t\in[0,1)$ and that for its density $\rho_t$ it holds
\[
\rho_t^{-\frac1N}(\gamma_t)\geq (\mm({\rm Ann}_{R}(\bar x)))^{\frac1N}\sigma_{K,N}^{(1-t)}(\sfd(\gamma_0,\gamma_1))\qquad\text{where}\qquad\sigma_{K,N}^{(t)}(d):=\frac{\sinh(td\sqrt{|K|N})}{\sinh(d\sqrt{|K|/N})}
\]
$\ppi$-a.e.\ $\gamma$, hence using the fact that $\sigma_{K,N}^{(t)}(d)$ is decreasing in $d$ we deduce that
\begin{equation}
\label{eq:denst}
(\e_t)_*\ppi\leq\frac1{\mm({\rm Ann}_{R}(\bar x))\big(\sigma_{K,N}^{(1-t)}(R)\big)^N}\mm
\end{equation}
while the construction ensures that
\begin{equation}
\label{eq:conct}
(\e_t)_*\ppi\text{ is concentrated on $B_{R}(\bar x)\setminus\Omega^R_{\frac{t}{R}}$ }\qquad\forall t\in(0,1).
\end{equation}
Therefore for $\eta< \frac1{ R}$ using the above with $t:=\eta R$ we have
\begin{equation}
\label{eq:alto}
\begin{split}
\mm\big(B_{R}(\bar x)\setminus\Omega^R_\eta\big)&\stackrel{\eqref{eq:denst}}\geq {\mm({\rm Ann}_{R}(\bar x))}{\big(\sigma_{K,N}^{(1-\eta R)}(R)\big)^N}\,\ppi\Big(\e_{\eta R}^{-1}\big( B_{R}(\bar x)\setminus\Omega^R_\eta\big)\Big)\\
&\stackrel{\eqref{eq:conct}}= {\mm({\rm Ann}_{R}(\bar x))}{\big(\sigma_{K,N}^{(1-\eta R)}(R)\big)^N}
\end{split}
\end{equation}
and since $\Omega^R_\eta\subset{\rm Ann}_{R}(\bar x)\subset B_R(\bar x)$ yields $\Omega^R_\eta={\rm Ann}_{R}(\bar x)\setminus(B_{R}(\bar x)\setminus\Omega^R_\eta)$ in turn this gives
\[
\mm(\Omega^R_\eta)\stackrel{\eqref{eq:alto}}\leq \mm({\rm Ann}_{R}(\bar x))\Big(1-\big(\sigma_{K,N}^{(1-\eta R)}(R)\big)^{N}\Big)
\]
which, using the explicit expression of $\sigma_{K,N}^{(1-\eta R)}(R)$, gives   our claim \eqref{eq:1}.

\noindent\underline{Step 3: proof of \eqref{eq:2}} We shall prove the equivalent inclusion
\begin{equation}
\label{eq:2b}
{\rm Ann}_{R/2}(\bar x)\cap \mathcal E_{1,\bar x}\supset{\rm Ann}_{R/2}(\bar x)\cap  \bigcap_{i}  \bigcup_j\bigcap_{k\geq j}\big(\X\setminus\Omega^R_{2^{-i},2^{-k}}\big).
\end{equation}
Let $x$ belonging to the right hand side of \eqref{eq:2b} and  $\eps,\eps'>0$. Pick $i\in\N$   such that $2^{-i}\leq \eps{\eps'}$ and find $j$  such that $x\in\X\setminus\Omega^R_{2^{-i},2^{-k}}$ for every $k\geq j$. Up to increase $j$ we can also assume that $2^{-j}<\frac1{R}$, then we  put $\bar r:= \eps'2^{-j}$ and  for given $r\in(0,\bar r)$ we let $k\geq j$ be such that $\eps'2^{-(k+1)}< r\leq \eps'2^{-k}$.

By definition of $\Omega^R_{2^{-i},2^{-k}}$ we know that there is $y\in \X\setminus \Omega^R_{2^{-k}}$ with $\sfd(x,y)\leq 2^{-i-k}$ and since $x\in{\rm Ann}_{R/2}(\bar x)$ the bound  $2^{-i-k}\leq 2^{-j}< \frac1{R}$ grants that $y\in {\rm Ann}_R(\bar x)\setminus \Omega^R_{2^{-k}}$. By definition of $\Omega^R_{2^{-k}}$ this means  that there exists  a unit speed geodesic starting from $\bar x$ passing through $y$ of length $\sfd(\bar x,y)+2^{-k}\geq \sfd(\bar x,y)+\frac{r}{\eps'}$ and since $\sfd(x,y)\leq 2^{-i-k}\leq \eps\eps'$, taking into account the arbitrariness of $\eps,\eps'$ we just showed  that $x\in \mathcal E_{1,\bar x}$, which was our claim.
\end{proof}
To get the proof of the dimension gap and, later, of the stratification result we shall use some facts about the $\HH^\alpha_\infty$ pre-measure. Two direct consequences of the definitions are
\begin{equation}
\label{eq:samenull}
\HH^\alpha_\infty(A)=0\qquad\Leftrightarrow\qquad\HH^\alpha(A)=0
\end{equation}
and
\begin{equation}
\label{eq:usc}
\sfd_{\rm H}(A_n,A)\to 0,\quad A\text{ compact}\qquad\Rightarrow \qquad\HH^\alpha_\infty(A)\geq \lims_{n\to\infty}\HH^\alpha_\infty(A_n).
\end{equation}
A  subtler result relates the Hausdorff measure and the density of the $\infty$-Hausdorf premeasure, see \cite[Proposition 11.3]{Giusti84} and \cite[Theorem 2.10.17]{Federer69} for the proof:
\begin{lemma}[Density of $\infty$-Hausdorff premeasure]\label{le:hinfty}
Let $(\X,\sfd)$ be a   metric space, $\alpha\geq 0$ and $E\subset \X$ a Borel set. Then for $\HH^\alpha$-a.e.\ $x\in E$ we have
\[
\lims_{r\downarrow 0}\frac{\HH_\infty^\alpha(E\cap B_r(x)) }{r^\alpha}\geq 2^{-\alpha}\omega_\alpha.
\]
\end{lemma}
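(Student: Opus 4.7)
I would argue by contradiction: if the conclusion fails, there are $\lambda<2^{-\alpha}\omega_\alpha$ and $n\in\N$ such that the set
\[
E_{n,\lambda}:=\big\{x\in E\ :\ \HH^\alpha_\infty(E\cap B_r(x))\leq\lambda r^\alpha\ \forall r\in(0,1/n)\big\}
\]
has positive outer $\HH^\alpha$-measure. By standard regularity of Hausdorff measure, I extract a compact subset $K\subset E_{n,\lambda}$ with $0<\HH^\alpha(K)<\infty$, and the goal is to reach a contradiction from the existence of such $K$.

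The core of the argument is the self-improvement inequality
\[
\HH^\alpha_s(K)\leq c\,\HH^\alpha_s(K)\qquad\forall s\in(0,1/n),\quad c:=2^\alpha\lambda/\omega_\alpha<1,
\]
where $\HH^\alpha_s$ denotes the usual $s$-restricted Hausdorff premeasure. To prove it, fix $\eps>0$ and choose a cover $\{U_j\}_j$ of $K$ with $d_j:=\diam(U_j)\leq s$ and $\sum_j\omega_\alpha(d_j/2)^\alpha<\HH^\alpha_s(K)+\eps$. After discarding $U_j$'s disjoint from $K$ and picking $z_j\in U_j\cap K$, I have $U_j\subset\overline{B}_{d_j}(z_j)\subset B_{d_j+\eta}(z_j)$ for every $\eta>0$; since $z_j\in K\subset E_{n,\lambda}$ and $d_j+\eta<1/n$ for $\eta$ small, the monotonicity of $\HH^\alpha_\infty$ and the defining property of $K$ give
\[
\HH^\alpha_\infty(E\cap U_j)\leq\inf_{\eta>0}\lambda(d_j+\eta)^\alpha=\lambda d_j^\alpha.
\]
Hence for each $j$ I find a cover $\{W_{j,k}\}_k$ of $E\cap U_j$ with $W_{j,k}\subset U_j$ (so $\diam(W_{j,k})\leq d_j\leq s$) and $\sum_k\omega_\alpha(\diam(W_{j,k})/2)^\alpha<\lambda d_j^\alpha+\eps 2^{-j}$. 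Since $K\cap U_j\subset E\cap U_j$, the family $\{W_{j,k}\}_{j,k}$ is an admissible cover for $\HH^\alpha_s(K)$, and direct summation yields
\[
\HH^\alpha_s(K)\leq\sum_{j,k}\omega_\alpha(\diam(W_{j,k})/2)^\alpha<\lambda\sum_j d_j^\alpha+\eps=c\sum_j\omega_\alpha(d_j/2)^\alpha+\eps<c(\HH^\alpha_s(K)+\eps)+\eps,
\]
which gives the claim after $\eps\downarrow 0$.

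Since $\HH^\alpha_s(K)\leq\HH^\alpha(K)<\infty$ and $c<1$, the self-improvement inequality forces $\HH^\alpha_s(K)=0$ for every $s\in(0,1/n)$. Letting $s\downarrow 0$ yields $\HH^\alpha(K)=\lim_{s\downarrow 0}\HH^\alpha_s(K)=0$, contradicting the choice of $K$. The main technical subtlety I anticipate is the measurability of $E_{n,\lambda}$, required to extract the compact subset $K$: this can be handled either by checking that $x\mapsto\HH^\alpha_\infty(E\cap B_r(x))$ is lower semicontinuous in $x$ (using the inclusion $B_r(x)\subset\liminf_{y\to x}B_r(y)$ together with the monotonicity of $\HH^\alpha_\infty$), so that $E_{n,\lambda}$ is Borel, or more robustly by passing to a Borel envelope of the bad set via outer regularity of $\HH^\alpha$. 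The sharpness of the constant $2^{-\alpha}\omega_\alpha$ traces back to the fact that a set of diameter $d_j$ containing a distinguished point $z_j$ is contained in a ball of radius $d_j$ (rather than $d_j/2$), which is precisely what produces the factor $2^\alpha$ in the key step $\sum_j d_j^\alpha=2^\alpha\sum_j(d_j/2)^\alpha$.
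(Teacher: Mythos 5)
The paper does not actually prove this lemma: it simply refers the reader to \cite[Prop.~11.3]{Giusti84} and \cite[Thm.~2.10.17]{Federer69}, so you are reconstructing a proof rather than reproducing one. Your self-improvement inequality $\HH^\alpha_s(K)\leq c\,\HH^\alpha_s(K)$ with $c=2^\alpha\lambda/\omega_\alpha<1$ is exactly the right engine, and the bookkeeping there (discarding sets disjoint from $K$, picking $z_j\in U_j\cap K$, using $U_j\subset B_{d_j+\eta}(z_j)$ with $d_j+\eta<1/n$, and refining the cover inside each $U_j$) is correct.

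The weak spot is the step where you extract a compact $K\subset E_{n,\lambda}$ of positive finite $\HH^\alpha$-measure. That step needs $E_{n,\lambda}$ to be $\HH^\alpha$-measurable, and neither of your two proposed fixes is quite complete. The lower semicontinuity of $x\mapsto\HH^\alpha_\infty(E\cap B_r(x))$ does not follow just from $B_r(x)\subset\liminf_{y\to x}B_r(y)$: you would need continuity of $\HH^\alpha_\infty$ along increasing sequences of \emph{arbitrary} sets, which is a genuine theorem (a version of Davies' increasing sets lemma for Hausdorff contents), not a formal consequence of monotonicity, because $\HH^\alpha_\infty$ is not a Borel outer measure and Borel sets need not be Carath\'eodory-measurable for it. And the ``Borel envelope'' alternative goes the wrong way: outer regularity of $\HH^\alpha$ gives you a Borel set \emph{containing} $E_{n,\lambda}$ with the same outer measure, but you need a measurable set \emph{inside} $E_{n,\lambda}$ to run the compactness argument, and the envelope contains points where the density bound may fail.

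The simplest repair, and essentially what the cited sources do, is to avoid the compact-subset extraction entirely and run your self-improvement directly on the outer measure of the bad set. Fix $x_0$ and set $A:=E_{n,\lambda}\cap B_{1/(4n)}(x_0)$, so that $\HH^\alpha_\infty(A)\leq\omega_\alpha(1/(4n))^\alpha<\infty$ automatically; cover $A$ by sets $U_j\subset A$ (hence $\diam U_j<1/(2n)$), pick $z_j\in U_j\cap E_{n,\lambda}$, and your estimate gives $\HH^\alpha_\infty(A\cap U_j)\leq\lambda d_j^\alpha$. Summing yields $\HH^\alpha_\infty(A)\leq c\,\HH^\alpha_\infty(A)$, hence $\HH^\alpha_\infty(A)=0$, hence $\HH^\alpha(A)=0$ by \eqref{eq:samenull}. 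Covering $E_{n,\lambda}$ by countably many such balls gives $\HH^\alpha(E_{n,\lambda})=0$, and taking the countable union over $n\in\N$ and rational $\lambda<2^{-\alpha}\omega_\alpha$ gives the conclusion. This works directly with outer measure, so no measurability, compactness, or $\sigma$-finiteness hypotheses are needed, and it avoids the increasing-sets lemma. As a minor remark, the case $\alpha=0$ is degenerate ($d_j^0=1$) but the statement is then trivial, so it causes no real trouble.
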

A last property of Hausdorff measures that we shall use is the following, see  \cite[Theorem 2.10.45]{Federer69} for the proof:
\begin{equation}
\label{eq:dimprod}
\HH^{\alpha}(\X)=0\qquad\Leftrightarrow\qquad \HH^{\alpha+1}(\X\times\R)=0,
\end{equation}
valid for any $\alpha\geq 0$ and metric space $\X$.

We can now prove Theorem \ref{thm:dimgap}:
\begin{proofb}{\bf of Theorem \ref{thm:dimgap}} We shall assume that $\dim_\HH(\X)>N-1$ and prove that $\HH^N(\X)>0$, thanks to Proposition \ref{prop:basehn} this is sufficient to conclude. We start claiming that
\begin{equation}
\label{eq:claimtang}
\textrm{there exists an iterated tangent space of $\X$ which is the Euclidean space $\R^N$.}
\end{equation}
To prove this, let  $\eps>0$ be so that $\HH^{N-1+\eps}(\X)>0$, then by  Proposition \ref{le:nm1} we also have $\HH^{N-1+\eps}(\mathcal E_1(\X))>0$   and we can apply Lemma \ref{le:hinfty} to $E:=\mathcal E_1(\X)$ to find $x\in\mathcal E_1(\X)$ and $r_n\downarrow0$ such that 
\begin{equation}
\label{eq:hi}
\lim_{n\to\infty}\frac{\HH_\infty^{N-1+\eps}(\mathcal E_1(\X)\cap B_r(x)) }{r_n^{N-1+\eps}}\geq 2^{-\alpha}\omega_\alpha.
\end{equation} 
Recalling \eqref{eq:Gcomp}, up to pass to a not relabeled subsequence  we can assume that the spaces $(\X_n,\sfd_n,\mm_n,x_n):=(\X,\sfd/r_n,\mm/\mm(B_{r_n}(x)),x)$ pmGH-converge to some $\RCD(0,N)$ space\linebreak $(\Y^1,\sfd_{\Y^1},\mm_{\Y^1},o)$ as $n\uparrow\infty$. It is clear that after embedding all these spaces into a realization of such convergence we have that $\overline B^{\sfd_n}_1(x_n)\to \overline B^{\sfd_{\Y^1}}_1(o)$ w.r.t.\ the Hausdorff distance and thus we have
\[
\HH_\infty^{N-1+\eps}( \overline B^{\Y^1}_1(o))\stackrel{\eqref{eq:usc}}\geq\lims_{n\to\infty}\HH_\infty^{N-1+\eps}(\overline B^{\X_n}_1(x_n))=\lims_{n\to\infty}\frac{\HH_\infty^{N-1+\eps}(\overline B^{\X}_{r_n}(x))}{r_n^{N-1+\eps}}\stackrel{\eqref{eq:hi}}>0
\] 
which, by \eqref{eq:samenull}, forces 
\begin{equation}
\label{eq:hntang}
\HH^{N-1+\eps}(\Y^1)>0.
\end{equation}
By definition of $\mathcal E_1(\X)$ the fact that  $x\in \mathcal E_1(\X)$ grants that $\Y^1=\R\times \X^1$ and since  $\Y^1$ is $\RCD(0,N)$, the splitting ensures that $\X^1$ is either a point  or $N\geq 2$ and $\X^1$  is $\RCD(0,N-1)$.

If $\X^1$ is a point we have $\Y^1=\R$ and \eqref{eq:hntang} forces $N+\eps\leq 2$. Since $N\in\N$ and $N\geq 1$, this implies $N=1$.

If instead $N\geq 2$ we use \eqref{eq:hntang} and \eqref{eq:dimprod} to deduce that  $ \HH^{N-2+\eps}(\X^1)>0$ and repeat the argument with the $\RCD(0,N-1)$ space $\X^1$ in place of $\X$ and $N-1$ in place of $N$.

Iterating this procedure after exactly $N$ steps we arrive at a tangent space $\Y^N$ to the $\RCD(0,1)$ space $\X^{N-1}$ of the form $\Y^N=\R\times\X^N$, and since $\Y^N$ is itself $\RCD(0,1)$ this forces $\X^N$ to be a point.

In summary, we proved  claim \eqref{eq:claimtang}. Therefore  by a diagonalization argument there is $\tilde r_n\downarrow 0$ and $(\tilde x_n)\subset \X$ such that for the rescaled spaces $(\tilde \X_n,\tilde \sfd_n):=(\X,\sfd/\tilde r_n)$ it holds
\begin{equation}
\label{eq:itertang}
\lim_{n\to\infty}\sfd_{\rm GH}\Big(\bar B_R^{\tilde \X_n}(x_n),\bar B_R^{\R^N}(0)\Big)= 0\qquad\forall R>0.
\end{equation}
Now we consider  $\delta_2=\delta_2(1/2,N)$ be given by Proposition \ref{thm:MN2} and pick $R:=\frac1{\delta_2}$ in \eqref{eq:itertang} above to conclude that for $n$ sufficiently big we have $\tilde r_n^2K\geq-\delta_2$ and \eqref{viciniviciniGH} is satisfied for the $\RCD(\tilde r_n^2K,N)$ space $(\tilde \X_n,\tilde \sfd_n,\mm)$. Fix such $n$ and let $U\subset \tilde\X_n$, $u:U\to u(U)\subset \R^N$ be given by Proposition  \ref{thm:MN2} with $\eps=\frac12$. Notice that \eqref{eq:volest2} forces $\HH^N(u(U))=\mathcal L^N(u(U))>0$ and since $u$ is biLipschitz we also have that $U\subset \tilde \X_{n}$ has positive $\HH^N$ measure in the space $(\tilde \X_n,\tilde\sfd_n)$; given that $\tilde \X_n$ is obtained by rescaling of $\X$, we see that $U\subset \X$ also has positive $\HH^N$ measure in the space $\X$, which gives the conclusion.
\end{proofb}
{The proof of Corollary \ref{cor:ref} can now be easily obtained:
\begin{proofb}{\bf of Corollary \ref{cor:ref}} If $N$ is integer the claim is a direct consequence of Proposition \ref{prop:basehn} (see also \cite[Corollary 2.3]{Sturm06II}). Otherwise let $[N]_+:=\min\{n\in\N:N\leq n\}$ and notice that $N<[N]_+$ and that $(\X,\sfd,\mm)$ is an $\RCD(K,[N]_+)$ space. Thus by Theorem \ref{thm:dimgap} and again Proposition \ref{prop:basehn} we conclude that $\dim_\HH(\X)\leq [N]_+-1=[N]$.
\end{proofb}
}
\subsection{Non-collapsed and collapsed convergence}
Having at disposal the `continuity of volume' granted by Theorem \ref{thm:contvol} and the `dimension gap' Theorem \ref{thm:dimgap} we can now easily obtain the stability of the class of $\ncRCD(K,N)$ spaces as stated in Theorem \ref{thm:stabnc}:
\begin{proofb}{\bf of Theorem \ref{thm:stabnc}} \\
\noindent{$\mathbf{ (i)}$} The fact that the $\lims$ is actually a $\lim$ is a direct consequence of Theorem  \ref{thm:contvol}  (recall also \eqref{eq:hnsfere}).  This and the compactness of the class of $\RCD(K,N)$ spaces (see \eqref{eq:Gcomp}) ensure that up to pass to a subsequence, not relabeled, we can assume that there is a Radon measure $\mm_\infty$ on $\X_\infty$ such that $(\X_{n},\sfd_{n},\HH^N,x_{n})$ pmGH-converge to $(\X_\infty,\sfd_\infty,\mm_\infty,x)$ and to conclude it is sufficient to prove that $\mm_\infty=\HH^N$, as this will in particular imply that the limit { metric measure } space does not depend on the particular converging subsequence chosen.

By Theorems  \ref{thm:stabweak}, \ref{thm:wncpropp} and point $(i)$ of Lemma \ref{lm:propt} we  know that $\mm_\infty=\vartheta\HH^N$ for some $\vartheta\leq 1$, so that our aim is to prove that $\vartheta=1$ $\mm_\infty$-a.e.. If not, there would exist $y_\infty\in\X_\infty$ and $r>0$ such that
\begin{equation}
\label{eq:matt6}
\mm_\infty(B_r^{\X_\infty}(y_\infty))<\HH^N(B_r^{\X_\infty}(y_\infty)).
\end{equation}
Now find a  sequence $y_n\stackrel{GH}\to y_\infty$ (recall \eqref{eq:exseq}), notice that  $\sfd_{\rm GH}(\bar B_r^{\X_n}(y_n),\bar B_r^{\X_\infty}(y_\infty))\to 0$ as $n\to\infty$ and use Theorem \ref{thm:contvol} to  obtain
\[
\lim_{n\to\infty}\HH^N( B_r^{\X_n}(y_n))\stackrel{\eqref{eq:hnsfere}}=\lim_{n\to\infty}\HH^N(\bar B_r^{\X_n}(y_n))=\HH^N(\bar B_r^{\X_\infty}(y_\infty))\stackrel{\eqref{eq:matt6}}>\mm_\infty(B_r^{\X_\infty}(y_\infty)),
\]
contradicting \eqref{eq:convballs}.

\noindent{$\mathbf{ (ii)}$} We argue by contradiction and assume 
\begin{equation}
\label{eq:contrcoll}
\dim_\HH(\X_\infty)> N -1.
\end{equation}
By  the compactness of the class of $\RCD(K,N)$ spaces  we know that there exists a Radon measure $\mm_\infty$ on $\X_\infty$ and a subsequence, not relabeled, such that the normalized spaces\linebreak $(\X_n,\sfd_n,{\HH^N}/\HH^N(B_1(x_n)),x_n)$ pmGH-converge to $(\X_\infty,\sfd_\infty,\mm_\infty,x)$ (recall \eqref{eq:Gcomp}). In particular, this grants that $(\X_\infty,\sfd_\infty,\mm_\infty)$ is an $\RCD(K,N)$ space, so that our assumption \eqref{eq:contrcoll} and Theorem \ref{thm:dimgap}  yield that $\HH^N(\X_\infty)>0$ and thus there is $x'\in\X$ such that $\HH^N(B_1(x'))>0$. Now find a sequence $x'_n\stackrel{GH}\to x'$ (recall \eqref{eq:exseq}) and use   Theorem \ref{thm:contvol} (and \eqref{eq:hnsfere}) to obtain that
\[
\HH^N(B^{\X_n}_1(x_n'))\to\HH^N(B^{\X_\infty}_1(x'))>0.
\]
Taking into account that $\lim_{n\to\infty }\sfd_n(x_n,x_n')=\sfd_\infty(x,x')<\infty$, such convergence and the uniform local doubling property granted by the Bishop-Gromov inequality give that
\[
\limi_{n\to\infty}\HH^N(B^{\X_n}_1(x_n))>0,
\]
which contradicts our assumption $\lim_{n\to\infty}\HH^N(B^{\X_n}_1(x_n))=0$ and thus yields the thesis.
\end{proofb}

\subsection{Volume rigidity}
{
Collecting what proved so far it is now easy to establish the volume rigidity result, Theorem \ref{thm:volrig}, and its  Corollary \ref{cor:Bishop}.
}

\begin{proofb}{\bf of Theorem \ref{thm:volrig}}

\noindent\underline{Step 1: set up} Let  $(\X,\sfd,\HH^N)$ be a $\ncRCD(0,N)$ space and $\bar x\in\X$ such that
\begin{equation}
\label{eq:hyp}
\HH^N(B^\X_1(\bar x))\geq\HH^N(B_1^{\R^N}(0)).
\end{equation}
We shall prove that this implies that $\overline B_{1/2}^{\X}(\bar x)$ is isometric to $\overline B_{1/2}^{\R^N}(0)$. Thanks to Gromov compactness theorem \eqref{eq:Gcomp} and to the stability of the $\ncRCD$ condition under non-collapsed convergence established in Theorem \ref{thm:stabnc}, this is sufficient to conclude.

\noindent\underline{Step 2: the cone $\Y$} Consider the function $(0,1]\ni r\mapsto \frac{\HH^N(B_r(\bar x))}{\omega_N r^N}$ and notice that the Bishop-Gromov inequality grants that it is non-increasing, that by \eqref{eq:hyp} its value at $r=1$ is 1 and, recalling the definition of $\vartheta_N$ and Corollary \ref{cor:densnc}, that it converges to $\vartheta_N[\X](\bar x)\leq 1$ as $r\downarrow0$. Hence
\begin{equation}
\label{eq:dens0}
\vartheta_N[\X](\bar x)=\frac{\HH^N(B_r(\bar x))}{\omega_N r^N}=1\qquad\forall r\in(0,1]
\end{equation} 
and by the `volume cone to metric cone' \cite[Theorem 1.1]{DPG16} we get the existence of  an $N$-cone $(\Y,\sfd_\Y,\mm_\Y,o)$ and a measure preserving isometry $\iota:B^\X_{1/2}(\bar x)\to B^\Y_{1/2}(o)$.

It follows that $\mm_\Y=\HH^N$ and thus that $\Y$ is $\ncRCD(0,N)$. Also, the very definition of $\vartheta_N$ and the properties of $\iota$ give that \begin{equation}
\label{eq:xy}
\vartheta_N[\X](x)=\vartheta_N[\Y](\iota(x))\qquad\forall x\in B^\X_1(\bar x).
\end{equation} 
Now observe that by a simple scaling argument it is easy to see that $\vartheta_N[\Y]$ is constant along rays, and this fact together with the lower semicontinuity of $\vartheta_N[\Y]$ given by point $(i)$ in  Lemma \ref{lm:propt} shows that $\vartheta_N[\Y](o)\leq\vartheta_N[\Y](y)$ for every $y\in\Y$. Therefore
\begin{equation}
\label{eq:u1}
1\stackrel{\eqref{eq:dens0}}=\vartheta_N[\X](\bar x)\stackrel{\eqref{eq:xy}}=\vartheta_N[\Y](o)\leq\vartheta_N[\Y](y)\stackrel{\eqref{eq:cor}}\leq1\qquad\forall y\in\Y.
\end{equation}

\noindent\underline{Step 3: $\Y=\R^N$} According to Lemma \ref{lm:favaa} it is sufficient to prove that  $\Y$ is an $N$-metric measure cone centered at any $y\in\Y$ and by the  `volume cone to metric cone' \cite[Theorem 1.1]{DPG16} in order to prove this it is sufficient to show that $r\mapsto\frac{\HH^N(B_r(y))}{\omega_Nr^N}$ is constant for every $y\in\Y$. By the very definition of $N$-cone this is true for $y=o$, then for general $y\in\Y$ put $R:=\sfd_\Y(y,o)$ and notice that
\[
\begin{split}
\lim_{r\to\infty}\frac{\HH^N(B_r(y))}{\omega_Nr^N}= \lim_{r\to\infty}\frac{\HH^N(B_{r+R}(y))}{\omega_N(r+R)^N}\geq \lim_{r\to\infty}\frac{\HH^N(B_{r}(o))}{\omega_Nr^N}\frac{r^N}{(r+R)^N}=\vartheta_N[\Y](o)\stackrel{\eqref{eq:u1}}=1,
\end{split}
\]
so that the conclusion follows from \eqref{eq:u1} and the monotonicity granted by Bishop-Gromov inequality \eqref{e:BG}.
\end{proofb}

{
\begin{proofb}{\bf of Corollary \ref{cor:Bishop}}
Inequality \eqref{e:Bishop} immediately follows from Corollary \ref{cor:densnc} and the Bishop-Gromov inequality \eqref{e:BG}. The scaled version of Theorem \ref{thm:volrig}  ensures the desired rigidity for the equality case. Moreover, by the second step in the proof of  Theorem \eqref{thm:volrig} we see that if
\[
\vartheta(x)=\lim_{r \to 0} \frac{\HH^N(B_r^\X(x))}{\omega_Nr^N}=1,
\]
then the spaces \((\X,\sfd/r, \HH^n)\) converges to \((\R^N, \sfd_{\Eu}, \HH^N)\). The converse being an easy consequence of Theorem \ref{thm:stabnc}, this concludes the proof.
\end{proofb}
}

\subsection{Stratification}

Here we prove  the stratification result stated in Theorem \ref{thm:singular}; notice the similarity with the proof of Theorem \ref{thm:dimgap}. 

Let us begin by giving the definition of the $k$-singular set $S_k(\X)$:
\begin{equation}
\label{eq:essek}
\begin{split}
S_k(\X):=\Big\{x\in\X\ :\ &\text{for every tangent space $(\Y,\sfd_\Y,\mm_\Y,y)$ of $\X$ at $x$ we have}\\
&\text{$\sfd_{\rm GH}\big(\bar B_1^\Y(y),\bar B^{\R^{k+1}\times\Z}_1((0,z))\big)>0$ for  all pointed spaces $(\Z,\sfd_\Z,z)$}\Big\}
\end{split}
\end{equation}

We can now prove Theorem \ref{thm:singular}:
\begin{proofb}{\bf  of Theorem \ref{thm:singular}}
We argue by contradiction, thus we assume that for some  \(k\in \N\) and $\ncRCD(K,N)$ space $\X$ we have $\dim_{\HH} S_k(\X) >k$. Hence for some $k'>k$ it holds
\begin{equation}
\label{eq:kp}
\HH^{k'} (S_k(\X))>0.
\end{equation}
Then for \(\eps>0\) define
\[
S^\eps_k(\X)=\Big\{ x\in\X: \sfd_{\rm GH}\big(B_r^\X(x), B_r^{\R^{k+1}\times \Z}((0,z))\big)\ge \eps r\quad \forall r\in(0, \eps),\textrm{ pointed \(\Z\) } \Big\}
\]
and note that \(S^\eps_k(X)\) is closed and that $S_k(\X)=\bigcup_{i\in\N} S^{2^{-i}}_k(\X)$.
From this and \eqref{eq:kp} it follows that  there exists \(\bar \eps>0\) such that $\HH^{k'} (S^{\bar \eps}_k(\X))>0$.
We now apply Lemma \ref{le:hinfty} to $E:=S^{\bar \eps}_k(\X)$ to deduce that there exists $x\in S^{\bar \eps}_k(\X)$ and $r_n\downarrow0$ such that 
\begin{equation}\label{cluster}
\lim_{n\to\infty} \frac{\HH_{\infty}^{k'}\big(S^{\bar \eps}_k(\X)\cap B^{\X}_{r_n}(x)\big)}{r_n^{k'}}\ge 2^{-k'} \omega_{k'}.
\end{equation}
By Corollary \ref{cor:densnc}  we have  \(\vartheta_N[\X, \sfd, \mm](x)\leq 1\) and thus  by  Proposition \ref{le:metriccone}, up to pass to a non-relabeled subsequence, we can assume that the rescaled spaces $(\X_n,\sfd_n,\mm_n,x_n):=(\X,\sfd/r_n,\mm/r_n^N, x)$ pmGH-converge to  a pointed metric measure cone \((\Y,\sfd_\Y,\mm_\Y,y)\) which, by Theorem \ref{thm:stabnc}, is $\ncRCD(0,N)$.

Embedding all these spaces into a proper realization of this pmGH-convergence (recall \eqref{eq:Gcomp}) and using the metric version of Blaschke's theorem (see \cite[Theorem 7.3.8]{BBI01}) we see that, extracting if necessary a further subsequence, we can assume that the {compact }sets $S^{\bar \eps}_k(\X_n)\cap \bar B^{\sfd_n}_1(x_n)$ converge to some compact set $A\subset\Y$ w.r.t.\ the Hausdorff distance. A  simple diagonal argument based on the very definition of $S^{\bar \eps}_k(\Y)$ then shows that $A\subset S^{\bar \eps}_k(\Y)$ and thus
\[
\HH^{k'}_\infty (S^{\bar \eps}_k(\Y))\geq \HH^{k'}_\infty(A)\stackrel{\eqref{eq:usc}}\geq\lims_{n\to\infty}\HH_\infty^{k'}\big(S^{\bar \eps}_k(\X_n)\cap B^{\sfd_n}_1(x_n)\big)=\lims_{n\to\infty}\frac{\HH_\infty^{k'}\big(S^{\bar \eps}_k(\X)\cap B^{\sfd}_{r_n}(x) \big)}{r_n^{k'}}\stackrel{\eqref{cluster}}>0.
\]
Hence by \eqref{eq:samenull} we also have  $\HH^{k'}(S^{\bar\eps}_k(\Y)\setminus \{y\})>0$ and we can repeat the argument to find $z\in S^{\bar\eps}_k(\Y)$, $z\neq y$, and a tangent cone $(\Y',\sfd_{\Y'},\mm_{\Y'},y')$ at $z$, which is $\ncRCD(0,N)$, such that 
\begin{equation}
\label{eq:accap}
\HH^{k'}(S^{\bar \eps}_k(\Y'))>0. 
\end{equation}
Since $z\neq y$ the cone $\Y'$ contains a line passing through its origin $y'$ and thus by the splitting theorem for $\RCD$ spaces we deduce that $\Y'=\R\times \X^1$ for some  metric measure space $(\X^1,\sfd^1,\mm^1)$.

If $k=0$ this is enough to  conclude, because such splitting contradicts the choice $z\in S^{\bar\eps}_k(\Y)$. Otherwise $k\geq 1$, hence $k'>1$ and \eqref{eq:accap} and $N\in\N$ force $N\geq 2$. Then the splitting grants that $\X^1$ is an $\RCD(0,N-1)$ space and from the fact that $\Y'$ is $\ncRCD(0,N)$ and Proposition \ref{prop:prod} we  deduce that  in fact $\X^1$ is $\ncRCD(0,N-1)$. Taking into account the trivial implication
\[
(r,x^1)\in S_k(\R\times \X^1) \iff x^1\in S_{k-1}(\X^1)
\]
and \eqref{eq:dimprod}, from \eqref{eq:accap} we deduce that
\[
\dim_{\HH}(S_{k-1}(\X^1))>k-1.
\]
We can therefore repeat the whole argument with $\X^1$ and $k-1$ in place of $\X$ and $k$: iterating we eventually find a contradiction and achieve the proof.
\end{proofb}

\begin{remark}[Polar spaces]{\rm This theorem is also valid, with the same proof, in the a priori larger class of $\RCD(K,N)$ spaces $\X$ such that every iterated tangent cone is a metric cone (notice that the analogue of Proposition \ref{prop:prod} holds, rather trivially, for this class of spaces). Spaces with this property have been called \emph{polar} in \cite{Cheeger-Colding97I}.

Notice that  $\wncRCD(K,N)$ spaces such that $\vartheta_N[\X]$ is locally bounded from above are polar, and that Theorem \ref{thm:stabweak} grants that this class of spaces is stable w.r.t.\ pmGH-convergence provided we impose a uniform local upper bound on the $\vartheta$'s. 
}\fr\end{remark}
\begin{remark}[Boundary of $\ncRCD$ spaces]{\rm In the case of non-collapsed Ricci limit spaces it has been shown in \cite{Cheeger-Colding97I} that 
\begin{equation}
\label{eq:dimnm1}
S_{N-1}(\X)\setminus S_{N-2}(\X)=\emptyset.
\end{equation}
This is however false in the present situation, because, for instance, the closed unit ball $\bar B_1(0)\subset \R^N$ is a perfectly legitimate $\ncRCD(0,N)$ space and every point in the boundary belongs to $S_{N-1}(\X)\setminus S_{N-2}(\X)$.

The problem is the presence of the boundary: looking for a moment just at smooth objects, compact manifolds with (convex) boundary are always $\RCD(K,N)$ spaces for suitable $K,N$ but not considered in \cite{Cheeger-Colding97I} as objects whose limits define Ricci-limit spaces. Then in \cite{Cheeger-Colding97I} it has been proved (with an argument also linked to topology) that in the non-collapsing situation  boundary of balls converge to  boundary of balls, a fact which quite easily implies \eqref{eq:dimnm1}.

This line of thoughts suggests to define the boundary $\partial\X$ of a $\ncRCD(K,N)$ space $\X$ as
\[
\partial\X:=\text{closure of }\big(S_{N-1}(\X)\setminus S_{N-2}(\X)\big).
\] 
Then, mostly by analogy with the theory of Ricci-limit and Alexandrov spaces,  a number of natural non-trivial questions arise:
\begin{itemize}
\item[-] Given a non-collapsing sequence $\X_n\to \X$  of $\ncRCD$ spaces, is it true that $\partial\X_n$ converge to $\partial\X$?
\item[-] Is it true that either $\partial\X=\emptyset$ or  $\partial\X$ is $N-1$-rectifiable with $\HH^{N-1}\restr{\partial \X}$ locally finite?
\item[-] Is $\X\setminus\partial\X$ a convex subset of $\X$? That is, is it true that for any $x,y\in \X\setminus\partial\X$ there is a (or perhaps, is any) geodesic connecting them entirely contained in $\X\setminus\partial\X$? 
\item[-] Let $\Y$ be a connected component of  $\partial\X$. Is $\Y$ connected by Lipschitz paths? If so:
\begin{itemize}
\item let $\sfd_\Y$ be the intrinsic distance on $\Y$ induced by the distance on $\X$: is $(\Y,\sfd_\Y)$  an Alexandrov space of non-negative curvature\footnote{Some time after  the publication of this paper, we have been informed by V. Kapovitch that there is a simple counterexample to this conjecture: it is sufficient to consider as \(\X\) the unit ball centered  at the vertex of a \(N\)-cone whose section \(\Z\) is \(\RCD(N-2,N-1)\) but does not have non-negative curvature.}? (notice that the analogous of this latter question for Alexandrov spaces is open - see \cite[Section 9]{Petrunin07}).
\item Let  $\X'$ be another $\ncRCD(K,N)$ space and assume that $\Y_1,\ldots,\Y_n$ and $\Y_1',\ldots,\Y_n'$ are the connected components of $\partial\X$ and $\partial\X'$ respectively. Assume also that for any $i=1,\ldots,n$ the spaces $\Y_i$ and $\Y_i'$ with the induced length metrics are isometric and glue  $\X$ and $\X'$ along their boundaries via such isometries. Is the resulting space $\ncRCD(K,N)$? (the analogous statement for Alexandrov spaces holds, see \cite{Petrunin97}).
\end{itemize}
\end{itemize} \ 
}\fr\end{remark}

\def\cprime{$'$} \def\cprime{$'$}

  \end{document}